\def\Var{{\textrm{Var}}}
\colorlet{linecol}{black!75}
\definecolor{ColorOne}{named}{MidnightBlue}
\definecolor{ColorTwo}{named}{Dandelion}
\definecolor{ColorThree}{named}{Plum}
\title{Numerical Methods for Computing the Discrete and Continuous Laplace Transforms}
\author{Yupeng Zhang $^{1\dag}$,
        Yueyang Shen  $^{1\dag}$, 
        Rongqian Zhang $^{1}$,
        Yuyao Liu $^{1}$,
        Yunjie Guo $^{1}$,
        Daxuan Deng $^{1}$, \and
        Ivo D. Dinov $^{1,2,3,*}$
}
\affil{$^1$ Statistics Online Computational Resource, University of Michigan, Ann Arbor, MI 48109\\
$^2$ Computational Medicine and Bioinformatics, University of Michigan, Ann Arbor, MI 48109\\
$^3$ Precision Health and Michigan Institute for Data Science, University of Michigan, Ann Arbor, MI 48109\\
$^{\dag}$ These authors have contributed equally to this work.\\
$^*$ Contact: Ivo Dinov, SOCR, University of Michigan,
\href{mailto:statistics@umich.edu}{statistics@umich.edu},
\href{https://www.SOCR.umich.edu}{\nolinkurl{https://www.SOCR.umich.edu}}, ORCID: 0000-0003-3825-4375.}
\numberwithin{equation}{subsection}
\begin{document} 



\maketitle 

\begin{figure}[htb]
    \centering    \includegraphics[width=14cm,height=5cm]{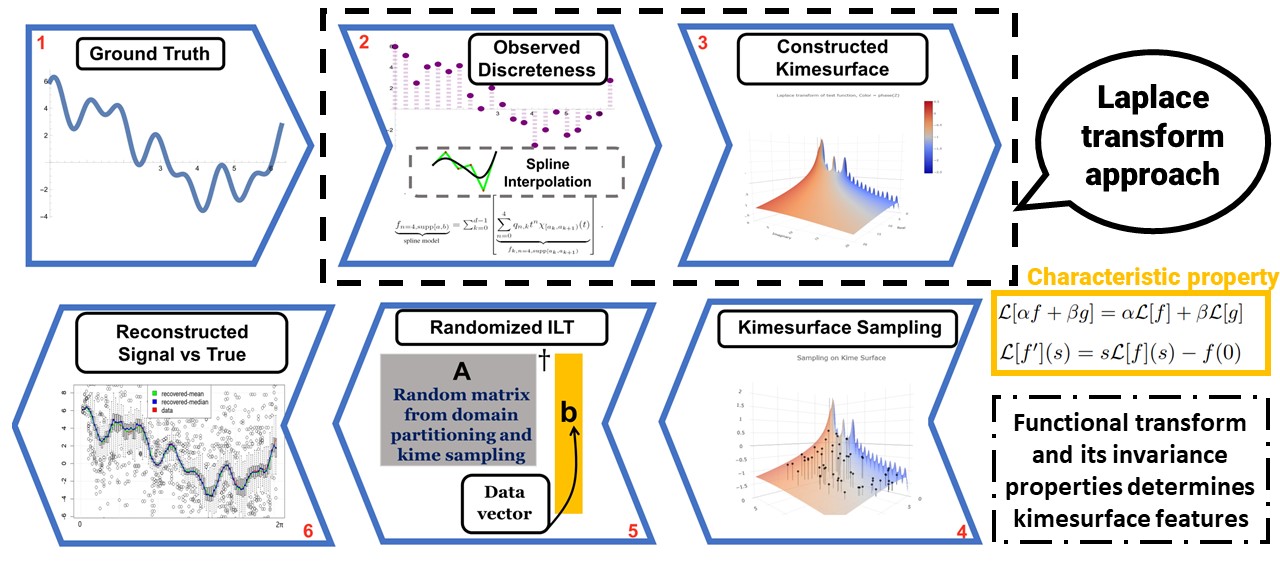}
    \caption{Graphical abstract}
    \label{graphical_abstract}
\end{figure}

\begin{abstract}
\textbf{Overview}: Many mathematical models and statistical computing techniques can be significantly simplified by mapping the original problem from its native state space representation into another space via specialized integral transformations. Examples of these include the Laplace transform (LT) and its inverse (ILT), which play vital roles in contemporary data science, probability modeling, and statistical inference. However, there are challenges with computing LT and ILT for some complex functions, noisy data, or incomplete observations. This article reports on a new numerical algorithm for computing the forward and inverse Laplace  transforms. The technique is applicable to both continuous functions and discrete  signals, and yields computationally  efficient  and  analytically  robust results. \vspace{1mm}

\textbf{Methods}:  We propose a numerical method to spline-interpolate discrete signals and then apply the integral transforms to the corresponding analytical spline functions. This represents a robust and computationally efficient technique for estimating the Laplace transform for noisy data. We revisited a Meijer-G symbolic approach to compute the Laplace transform and alternative approaches to extend canonical observed time-series. A discrete quantization scheme provides the foundation for rapid and reliable estimation of the inverse Laplace transform. We derive theoretic estimates for the inverse Laplace transform of analytic functions and demonstrate empirical results validating the algorithmic performance using observed and simulated data. We also introduce a generalization of the Laplace transform in higher dimensional space-time.
\vspace{1mm}

\textbf{Results}: We tested the discrete LT algorithm on data sampled from analytic functions with known exact Laplace transforms. The validation of the discrete ILT involves using complex functions with known analytic ILTs. For discrete signals, we also report empirical evidence of the asymptotic behavior of the expectation of the smallest singular value of the LT matrix, $\mathbb{E}(\sigma_n(\mathbf{A})) \sim \frac{1}{n^\gamma}$ for some $\gamma\in(0,2)$. We present an analysis on the smallest singular values of the random matrix that emerges in the ILT algorithm. We show that under idealized assumption a similar data matrix can be bounded. We also show that if phase sampling is independent with the domain and radial strategy then isotropicities for random matrix result will likely be violated.\vspace{1mm}

\textbf{Conclusions}: Our LT and ILT numerical algorithms perform well for estimating the Laplace transform of various discrete and continuous functions. We have designed, implemented, validated, and shared an R-implementation of several LT and ILT estimation methods. The techniques can be applied to analytical functions as well as observational signals. We also suggest a Clifford algebra approach to generalize the Laplace transform to higher dimensional space-time processes to distinguish space and time.

\begin{keyword} 
{Laplace transform; integral transforms; MGF; PDF; complex time; spacekime; data analytics}
\end{keyword}
\end{abstract}

\section{Introduction}
\subsection{Motivation}
Solutions to many challenging scientific problems rely on mechanistic understanding of specific natural processes. Such understanding often depends on difficult calculations, accurate model approximations, robust parameter estimations, efficient likelihood quantification, or derivations of reliable sample statistics. In practice, this modeling process can be significantly simplified by mapping the original (complex) problem from its native state space representation into another (simpler) space via specialized transformations. For instance, {\it{integral transforms}} map functions, equations, and conditions from their original domains into other domains where the subsequent manipulation, modeling, and estimation processes may be substantially simpler, compared to their counterparts in the native domain. Indeed, such transformations are most useful when the solution derived in the new state space can be effectively mapped back (by a corresponding inverse transform) into the original space and provide pragmatic solutions to the original scientific problem. 

The Fourier, wavelet and Laplace transforms \cite{saitoh1997integral} represent examples of a special kind of maps called {\it{integral transforms}} between a domain $\mathcal{V}$ and a range $\mathcal{U}$, $T:\mathcal{V}\to \mathcal{U}$, that can be represented by:
\begin{equation}
  T[f](u)=F(u)=\int_{\mathcal{V}} {\ K(v,u)\ d\mu(v)}=\int_{\mathcal{U}} {f(v)\ K(v,u)\ dv}  
\end{equation}
where the function $f$ represents the input of the transform $T$, the output function is denoted by $F(\cdot)\equiv \hat{f}(\cdot)=T[f](\cdot)$, the kernel function $K$ specifies the concrete mathematical operation associated with each transformation, and $\mu$ is a Borel measure on the domain. The marginal probability function is a specific example of integral transform, $f(u)=\int \frac{f_{VU}(v,u)}{f_V(v)}f_V(v)dv=\int f_{U\mid V}(u|v)f_V(v)dv$, where the conditional distribution $f_{U\mid V}$ can be interpreted as a Markov transition matrix, in finite state spaces, or as a Markov kernel, in the more general sense.

A formal mathematical prescription to facilitate kernel analysis relies on the idea of \textit{reproducing kernel Hilbert space (RKHS)}, $\mathcal{H}_{K}(\mathcal{X})$, associated with the kernel $K(\cdot,\cdot)$. In the RKHS,  $\forall f\in\mathcal{H}\equiv\mathcal{H}_{K}(\mathcal{X})$, $f(x)=\langle K(x,\cdot)f(\cdot)\rangle_{\mathcal{H}_{K}(\mathcal{X})}$ \cite{saitoh2016theory}. In certain data science applications, using ordinary Hilbert spaces is problematic, due to lack of sufficiently strong {\it{a priori}} conditions to support effective predictive analytics. For instance, the Hilbert space representing all square integrable functions, $L^2([-1,1])$, can admit functions with a  finite number of point infinities, which still remain square integrable. However, such singularities (finite number of infinities) add analytical difficulties related to numerical computing and  scientific inference. The main RKHS restriction ensures the boundedness of the evaluation functionals, i.e., $\forall t\text{, } \exists M>0 \text{, } |f(t)|\leq M\|f\|_{\mathcal{H}_{K}(\mathcal{X})},\forall f\in \mathcal{H}_{K}(\mathcal{X})$. The reproducing kernel property then arises from the Riesz representation theorem guaranteeing the existence of a unique kernel $K(x,\cdot)\in\mathcal{H}_{K}(\mathcal{X})$ \cite{goodrich1970riesz}. This exact one-to-one correspondence between kernels and RKHS is known as the Moore-Aronszajn theorem \cite{aronszajn1950theory,gretton2013introduction}.

Kernels play important role in statistical learning theory. An example is the recent seminal discovery on the duality of kernel regression with an infinitely wide neural network\cite{jacot2018neural}, which facilitates natural comparative studies examining the representation power of kernel methods and neural networks. For kernels that are positive semidefinite and symmetric, $K(\cdot,\cdot):\mathcal{X}\times\mathcal{X}\to \mathbb{R}$, yield corresponding integral-transform operators that are compact, continuous, self-adjoint, and non-negative. Let's denote the eigenfunctions and eigenvalues of the integral-transform operator by $\phi_i$ and $\sigma_i$ ($\sigma_1\geq\sigma_2\geq....$), respectively, i.e., $T\phi_i=\sigma_i\phi_i$. Then, the Mercer's theorem \cite{ghojogh2021reproducing,berlinet2011reproducing} states that 

\begin{equation}
    K(x,y) = \sum_{j=1}^{\infty}\sigma_j \phi_j(x)\phi_j(y).
\end{equation}
Mercer's theorem can be regarded as infinite dimensional extension of eigen-decomposition for positive symmetric semidefinite (PSD) matrices in finite dimensional spaces, i.e., 
$K=\sum_{i=1}^n \sigma_i v_iv_i^T$. In kernel-based support vector machine (SVM) clustering, the mapping $\psi=[\psi_1,\psi_2,\cdots ]$ is responsible for transforming the original datapoint cloud $\{x_i\}_{i=1}^M$ to a linearly separable dataset $\{\psi(x_i)\}_{i=1}^M$. The functions $\psi$ are directly related to the eigenstates $\sqrt{\sigma_i}\phi_i=\psi_i$ and suggest the compact {\it{kernel trick formula}} $K(x,y) = \langle \psi_i(x),\psi_i(y) \rangle = \psi_i(x)\cdot\psi_i(y)$.

Many interesting integral transformations correspond to invertible kernels $K(t,u)\longleftrightarrow K^{-1}(u,t)$ that naturally lead to corresponding invertible integral transforms that facilitate function or signal recovery
\begin{equation}
  f(x)=\int_{\mathcal{U}} {F(u)\ K^{-1}(u,x)\ du\ .}   
\end{equation}

For example, the classical Laplace transform (LT) is an integral transformation with a non-domain-symmetric kernel $K(t,z)=e^{-tz}$ mapping functions of a real variable $t\in\mathbb{R}^+$ (typically {\it{time}}) into functions of a complex frequency variable $z\in \mathbb{C}_{+}$ on the right half plane with the invertibility via the Bromwhich integral
\begin{equation}
\begin{aligned}
\mathcal{L}:L^2([0,\infty))\to H^2(\mathbb{C}_{+}), \quad \underbrace{\mathcal{L}(f)}_{F}(z)&=\int_{0}^{\infty} {f(t)e^{-zt}dt} \\
 \underbrace{ \mathcal{L}^{-1}\{F\}}_{f}(t)&=\frac{1}{2 \pi i} \int_{-i \infty}^{i \infty} F(z) \mathrm{e}^{z t} \mathrm{~d} z. 
\end{aligned}
 \end{equation}  
The classical Laplace transform is a bounded linear operator in the sense that it maps square-integrable functions on the positive reals onto the Hardy space $H^2(\mathbb{C}_{+})$ of the right half plane, which is defined as $H^2(\mathbb{C}_{+})=\{f:\|f\|_{H^2}=\text{sup}_{x>0} \sqrt{\int_{-\infty}^{\infty}|f(x+iy)|^2dy}< \infty\}$, with 
$\|\mathcal{L}f\|_{H^2(\mathbb{C}_{+})}= \|f\|_{L^2([0,\infty))}$ (see Appendix). Furthermore,  the unitary isomorphism between $L^2([0,\infty))$ and $H^2(\mathbb{C}_{+})$
is realized by the Laplace transform, and the invertibility is given by a speical case of the Bromwhich integral, as a Wick rotated inverse Fourier transform. Although this invertibility is useful, a basic set of polynomial functions is excluded from the space of square integrable functions. Therefore, a proper decay factor $\gamma$ (in the Bromwhich integral) is needed to temper the polynomial growth by exponential decay, i.e., ensure that that $e^{-\gamma t}f(t)\in L^2([0,\infty))$, and this sufficient decay permits an extended definition for a wider class of functions of a complex (frequency) variable $z\in \mathbb{C}\setminus \{Re(z)=-\infty\}$ (complex-time, aka {\it{kime}} \cite{dinov2021data2}).

We can interpret a Laplace transformed signal $f(t)\to {F}(z)$ as a more general function of complex-time frequency, $z=re^{i\theta}\equiv \underbrace{r\cos(\theta)}_{Re(z)} + i \underbrace{r\sin(\theta)}_{Im(z)}$, where the real component of the complex-time frequency, $Re(z)$, corresponds to the degree of damping associated with an exponential decrease of the signal amplitude, the imaginary component of the complex-time frequency, $Im(z)$, corresponds to the rate of sinusoid cycles (classical frequency), the phase, $\theta$, represents the random (IID) sampling for repeated controlled experiments, and the magnitude, $r$, corresponds to the natural longitudinal event ordering \cite{dinovdata} of the original temporal process, $f$. 

Note that the {\it{Laplacian kernel}} is different from the {\it{kernel for the Laplace transform}},
\begin{equation}
    \underbrace{K(x,y) = e^{-c\|x-y\|}}_{\text{Laplace (RBF) kernel}}  \quad ,\quad   
    \underbrace{K(t,z)=e^{-tz}}_{LT\  kernel}.
\end{equation}

The kernel of the Laplace transform takes the product of the two kernel arguments, whereas the Laplacian  takes the difference. The domain kernel of the Laplace transform is not necessary symmetric.

A symmetric reproducing kernel $\mathbb{R}^{+}\times\mathbb{R}^{+}\to\mathbb{R}$: $K(s,t)=\int_{0}^{\min(s,t)}\xi e^{\xi}d\xi$ can generate a Hilbert space that has a direct real inversion formula for Laplace transform \cite{saitoh2016theory}:
\begin{subequations}
\begin{align}
(\mathcal{L}^{-1}F)(t)=\sum_{n=1}^{\infty}\frac{1}{\lambda_n}\Big(\int_{0}^{\infty}pu(p)F(p)u_n(p)dp\Big)v_n(t)\ , \label{inverse_lap} \\ Lv_n=\lambda_nu_n,L^*u_n=\lambda_nv_n , \label{basis_condi}
\end{align}
\end{subequations}

 where $Lf(p) = p(\mathcal{L}f)(p)$, and  $\{v_n\}_{n=1}^{\infty}\subset H_K(w)$ and $\{u_n\}_{n=1}^{\infty}\subset L^2(u)$ are orthonormal systems in the spaces $H_K(w)$ and $L(H_K(w))$, and $w$ and $u$ are the corresponding weight function in these Hilbert spaces. Here $H_K(w)\equiv \{f: f(0)=0,\|f\|_{H_K(w)<\infty}, f\text{ is real-valued absolutely continuous on }[0,\infty)\}$, and $L(H_K(w))$ is the $L$ operator transformed function space of $H_K(w)$. The operator $\mathcal{L}$'s invertibility is realized by the basis matching condition in \ref{basis_condi}. To see this, we can expand $pF(p)\in L(H_K(w))$ in the $u_n$ basis, i.e., $pF(p)=p\mathcal{L}\mathcal{L}^{-1}F(p)=L\mathcal{L}^{-1}F(p)=\sum_{n=1}^{\infty}\Big(\int_{0}^{\infty}qu(q)F(q)u_n(q)dq\Big)u_n(p)$, and applying \ref{basis_condi} gives the inversion formula \ref{inverse_lap}.

Many applications of the LT involve solving differential equations \cite{THONGMOON2010425}, (functional distribution equations transforming time-series in spacetime to kime-surfaces in spacekime \cite{dinov2021data2,dinovdata,wang2022determinism}. Such transformations are useful for enhancing statistical inference \cite{BELOMESTNY20162092}. The closed-form analytical representation of the LT facilitates conversion of differential equations into algebraic equations and the transformation of the convolution operation into multiplication \cite{bellman1984laplace,deakin1981development}. Specifically, these properties provide recipes for solving non-homogeneous ordinary differential equation, where the inhomogeneity may be discontinuous and does not have to be a Dirac delta function, $\delta(t)$.

In this manuscript, we utilize the Laplace transform to represent classical longitudinal processes, such as time-series \cite{chatfield2003analysis}, as complex-valued functions of complex-time (kime) \cite{zhang2022kimesurface}. The motivation for this mapping of time-varying processes as kime-surfaces is two-fold. First, random sampling from a well-defined process with some underlying probability distribution may be thought of as discrete foliation of the kime-surface manifold \cite{rosenberg1968foliations,palmeira1978open}, where each leaf corresponds to an observed instance of the time-series process \cite{dinov2021data2,dinovdata}. Second, mathematical modeling and statistical inference based on the holistic (higher dimensional) kime-manifold is expected to yield more robust inference compared to the classical time-series analysis using repeated IID sampling. In addition, statistical inference in the space-kime domain can be represented in a Bayesian framework, which naturally leads to corresponding time-series analyses restricted to each leaf of the kime-surface foliation.

We present a new computationally efficient and analytically robust representation of the forward and the inverse Laplace transforms for both continuous functions and discrete signals. The R-implementation of the algorithm is tested and validated with simulated and real data (e.g., functional magnetic resonance imaging, fMRI). The inverse Laplace transform is the most difficult part as many applications  require (perfect or nearly perfect) reconstruction of the original signal, i.e., $\mathcal{L}^{-1}(\mathcal{L})(f)\equiv f$ and $\mathcal{L}(\mathcal{L}^{-1})(F)\equiv F$, for all $f:\mathbb{R}^+\to \mathbb{C}$ and $F=\mathcal{L}(f):\mathbb{C}\to \mathbb{C}$. 

In this study, we are interested in the statistical and data science aspects of the Laplace transform and most of the work relates to functions defined over the positive reals, $f:\mathbb{R}^+\to \mathbb{C}$. However, there is a relation between the more general {\it{Bilateral Laplace Transform}}, $\mathcal{B}$, of functions supported on $\mathbb{R}$ and the Laplace transform of functions supported on $\mathbb{R}^+$.
Assuming the following two definite integrals exist, $\int_{0}^{\infty}{e^{-st}f(t)dt}$
and $\int_{-\infty}^{0}{e^{-st}f(t)dt}$, the Bilateral Laplace transform is defined as the improper integral
\begin{equation}
  \mathcal{B}(f)(s)=\int_{-\infty }^{\infty } {e^{-st}f(t)dt}.  
\end{equation}

The direct relation $\mathcal{L} \longleftrightarrow \mathcal{B}$ can be expressed in terms of the {\it{Heaviside}} step function, 
\begin{equation}
  H(x) =\mathbf{1}_{x>0} = {\begin{cases}1, &x>0\\0,&x\leq 0\end{cases}}:\ \mathbb{R} \to [0,1],  
\end{equation}

and the sign-negation function, $m(x)=- x:\ \mathbb{R} \to \mathbb{R}$:
\begin{equation}
    {\mathcal{L}}(f)={\mathcal{B}}(f\circ H) \ \ \longleftrightarrow \ \ \mathcal{B}(f)={\mathcal{L}}(f)+
{\mathcal {L}}(f\circ m)\circ m.
\end{equation}

The Laplace transform plays an important role in modern data science, probability modeling, and statistical computing. Suppose $X\geq 0$ is a random variable corresponding to a particular process and a specific probability density function $f_X : \mathbb{R}^+\to \mathbb{R}$. Then, the Laplace transform of the density, i.e., the Laplace transform of the (positive-value) random variable $X$, is the {\it{expectation}} of an exponential function of $X$, i.e., 
$\mathcal{L}\{f_X\}(z)= \mathbb{E}_X \left [e^{-zX}\right ]=\int{e^{-zx} f_X(x)dx}.$

The {\it{moment generating function (MGF)}} of $X\ge 0$ is obtained by substituting $-t$ for the LT argument $z$. This is important, as MGFs facilitate the calculations of the distribution moments. Just like the coefficients of a Taylor series expansion of a nice (infinitely differentiable) function completely describes the function (in a polynomial basis), knowing all moments of a nice distribution completely describes the distribution.  Using the Laplace transform, we can recover the cumulative distribution function, $F_X$, of a continuous random variable $X$, 
\begin{equation}
  \mathcal{L}\left\{F_{X}'\right\}(x)=\underbrace{F_X(0)}_{0}+s\mathcal{L}\left\{F_{X}\right\}(x)\Longrightarrow \boxed{F_X(x)={\mathcal {L}}^{-1}\left\{{\frac {1}{s}} {\mathbb{E}} \left[ e^{-sX}\right]\right \}(x)=
{\mathcal {L}}^{-1}\left\{{\frac {1}{s}}{\mathcal {L}}\{f_X\}(s)\right\}(x).}  
\end{equation}

Before we dive into the newly proposed numerical algorithms, we will review some of the prior work in approximating the inverse Laplace transform (ILT). 

\subsection{Prior Work}

Davies and Martin \cite{davies1979numerical} investigated a number of different methods for numerically inverting the Laplace transform. Their study used (i) methods for expanding $f(t)$ using exponential functions; (ii)  techniques utilizing Gaussian quadrature; (iii) bilinear transformation strategies; and (iv) Fourier series expansion approaches. Their findings indicate that using Laguerre or Chebyshev polynomials accelerated the ILT convergence for a broad set of test functions. However, the authors reported that no method was optimal across all test-cases.

Miller and Guy \cite{miller1966numerical}
investigated numerical strategies to invert the Laplace transform using Jacobi polynomials. They approximated the complex-time function values $F(z)$ at discrete points $z = (\beta + 1 + k)\delta ,\, k = 0,1, \cdots $ by determining a finite number of the coefficients of the series expansion of $f(t)$ in terms of Jacobi polynomials.

Honig and Hirdes \cite{honig1984method} improved the ILT by simultaneously reducing the discretization error, accelerating the convergence of the Fourier series, and approximating the free parameters. The performance of the numerical ILT using only a few function evaluations of the Laplace transform  was reported to be good in a range of applications.

Valkó and Abate \cite{valko2004comparison} compared the numerical inversions of Laplace transform using Gaver functionals with Salzer summation, Wynn's rho algorithm, Levin's u-transformation, Lubkin's iterated w-transformation, and Brezinski's theta algorithm. Although some empirical findings were encouraging, the results were not generalized to identify specific conditions minimizing the ILT reconstruction error.

López-Fernández and Palencia used a quadrature formula and sinc interpolation to invert the Laplace transform \cite{lopez2004numerical}. This technique improves on the prior error bound in terms of the number of nodes used in the quadrature approximation.

Abate and Whitt \cite{abate1995numerical} specifically addressed the problem of numerically approximating the inverse Laplace transform for  probability  distribution functions. Their approach was based on the Fourier-series representation and the use of the finite Fourier cosine transform. 

Durbin proposed a method for numerical inversion of LT that bounds the error on the reconstructed signal independent of the argument $t$ and argues that the error rate can be controlled even in difficult test cases. 

More recently, Brzeziński \cite{brzezinski2018review} examined the performance of different ILT numerical algorithms using C++ and Python implementations. Using classical and difficult test-cases, the author evaluated multiple techniques and reported that methods involving deformed Bromwich contour integration and Fourier series expansions converged faster, had high-accuracy in recovering the original signals, and had a wider range of applications (i.e., are more general).

Rani, Mishra, and Cattani \cite{rani2018numerical,mishra2020laplace,rani2019solving} used orthonormal Bernstein operational matrices to obtain numerical solutions of the ILT. The core of their algorithms approximates the unknown longitudinal signal using truncated series of Bernstein basis polynomials. The authors estimated the Bernstein polynomial coefficients by the operational matrix of integration. Their findings suggest that the root mean square error between exact and the recovered longitudinal signals is small and they obtained rough upper bounds for the error. The same group also employed the numerical inverse Laplace transform for solving fractional order differential equations. Their approach is based on converting the linear fractional differential equations into the linear system of algebraic equations and expressing the unknown function as a series of orthogonal polynomials; specifically, Chebyshev polynomials of the second kind.  


Firouzjaei, Adibi, and Dehghan \cite{mohammadi2021local} used the Laplace transform, along with the local discontinuous Galerkin methods, to solve distributed‐order time‐fractional diffusion‐wave equations. By using LT to convert the PDE to a time‐independent problem, the authors solved these stationary equations by the local discontinuous Galerkin method and simultaneously  discretized the diffusion operators. The ILT provided a direct mechanism we derive the solutions to the original wave equation. This approach is computationally appealing as it is effective and provides a mechanism for parallel execution.

Expanding on most of these prior studies, this paper is focused on robust and practical numerical solutions to the ILT problem for discrete and continuous signals. The manuscript is organized as follows. In section 2, we review the laplace transforms and its various extensions. In section 3, we highlight the contribution of our methodology. In section 4, we define the analytical forms of the forward and inverse Laplace transforms with corresponding algorithmic approximations for various types of functions and domain quantizations. We also introduce a generalization of the Laplace transform for higher dimensional space-time signals. In section 5, we provide theoretical and empirical evidence for algorithm validation of the ILT using analytical functions, observed, and simulated data. Here we also explicate the assumptions and analyze the approximations of the discrete ILT algorithm relative to stochastic domain partitioning and selection of data points within each region. 
The core experiments and algorithmic validation results along with some conjectures about appropriate assumptions and the theoretical approximation limits are shown in section 5.
In the final section 6, we draw study conclusions and suggest future directions for algorithmic improvements. An extended appendix (section 8) provides a discussion of Laplace transform boundedness, some asymptotic analysis derivations, and additional examples of random domain partitioning using uniform, exponential, and equidistant breakpoints.

\section{Laplace Transforms and extensions}
\subsection{Laplace Transform and Dirichlet Series}

The discrete analog of the (continuous) Laplace transform may be defined as follows
\begin{equation*}
    f(s) = \sum_{n=1}^{\infty}a_n e^{-\lambda_n s}, \lambda_1\leq \lambda_2 \leq \cdots \ . 
\end{equation*}
\begin{minipage}{0.45\textwidth}
\vspace{-0.1cm}
Both the Laplace transform and Dirichlet series can be represented via the Laplace-Stieltjes integral
\begin{equation*}
    f(s) = \int_{0}^{\infty} e^{-st}d\mu(t)\ .
\end{equation*}
When $\mu(t)$ is a continuous measure, this produces the Laplace transform, whereas when $\mu(t)$ is a stepwise function (where the differential is the delta function), this effectively produces the Dirichlet series expression. If the Dirichlet series converges at some point $z_0$, then the converging series can help give reasonable numerical approximations for compact subsets within $Re(z')>Re(z_0)$.
\end{minipage}
\begin{minipage}{0.5\textwidth}
\vspace{-1cm}
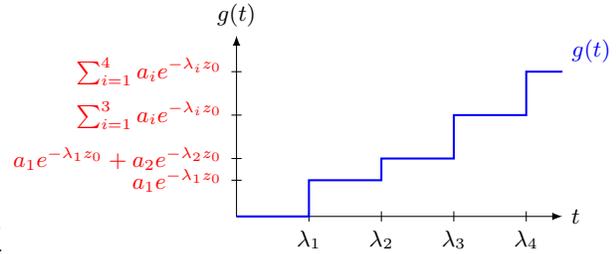
\begin{figure}[H]
\resizebox{\textwidth}{!}{\begin{tikzpicture}[>=latex, y=2cm, font=\small]
    \draw [<->] (0,1.25) node [above] {$g(t)$} |- (4.5,0) node [right] {$t$};
    \foreach \x [evaluate=\x as \y using \x/4] in {1,...,4} \draw (\x,2pt) -- (\x,-2pt) node [below] {$\lambda_{\x}$} ;

    \draw (2pt,0.25) -- (-2pt,0.25) node [left,red] {$a_1e^{-\lambda_1 z_0}$};
    \draw (2pt,0.4) -- (-2pt,0.4) node [left,red] {$a_1e^{-\lambda_1 z_0}+a_2e^{-\lambda_2 z_0}$};
    \draw (2pt,0.7) -- (-2pt,0.7) node [left,red] {$\sum_{i=1}^3a_ie^{-\lambda_iz_0}$};
    \draw (2pt,1) -- (-2pt,1) node [left,red] {$\sum_{i=1}^4a_ie^{-\lambda_iz_0}$};
    \draw [blue,thick] (0,0) -| (1,.25)  -| (2,.4) -| (3,.7) -| (4,1) -- (4.5,1) node [above right] {$g(t)$};  
\end{tikzpicture}
}
\caption{The stepwise function}
\label{stepwisefunc}
\end{figure}
\end{minipage} 

 Specifically, $\forall Re(z)>0$, let  $g(t)=\sum_{\lambda_n\leq t, n\geq 1}a_ne^{-\lambda_nz_0} $ denote a stepwise function (we illustrate a real case in Figure \ref{stepwisefunc})
\begin{equation}
    \frac{f(z+z_0)}{z} =  \sum_{n=1}^{\infty}a_ne^{-\lambda_nz_0}\frac{e^{-\lambda_nz}}{z}=\int_{\lambda_1}^{\infty}(\sum_{\lambda_n\leq t}a_ne^{-\lambda_nz_0})e^{-tz}dt=\mathcal{L}\Big( H(t-\lambda_1)g(t) \Big) .
\label{numerical_dir}
\end{equation}

Equation \ref{numerical_dir} establishes a correpsondence between the laplace transform of a stepwise signal and its landscape in the neighborhood. It suggests approximating with stepwise function may be a viable numerical strategy to compute and invert the Laplace transform which we will see in later sections.

\subsection{Laplace transform on matrix exponentials}
Laplace transform's definition can be extended to matrix exponentials scenario, where the definition naturally arises from considering complex systems that can be modelled by continuous time homogeneous first order ODE systems $\dot{x}=Ax$, where $A$ is an $n\times n$ matrix. This is an effective model for certain biological systems and observed time invariant linear dynamical processes \cite{may1972will}. The stability and the evolution of this equation can be investigated through LT/ILT where $f(t)=e^{tA}, f: \mathbb{R}^{+}\to \mathbb{R}^{n\times n}$
\begin{subequations}
\begin{equation}
    \int_0^{\infty} e^{-ts}e^{tA} dt = \underbrace{(sI-A)^{-1}}_{\text{resolvent}}=\frac{I}{s}+\frac{A}{s^2}+\cdots , \text{ for } |s| \text{ sufficient large}
\end{equation}
\begin{equation}
    x(t) = e^{tA}x(0), \mathcal{L}^{-1}((sI-A)^{-1}) = e^{tA} = I+tA+\frac{(tA)^2}{2!}+\cdots ,
\end{equation}
\end{subequations}
where the time translational invariant property is manifested by $x(t+t')=e^{tA}x(t'), \forall t'>0$.

\subsection{Clifford Algebra Laplace Transform Extensions}


The Clifford algebra, also known as geometric algebra \cite{ablamowicz1996clifford,hestenes2001new,doran2003geometric}, is a generalization for linear vector algebra. Clifford proposed adding inner product with Grassmann's outer product to generalize Grassman Algebra

\begin{equation}
\underbrace{\pmb{ab}}_{\substack{\text{geometric algebra}\\\text{geometric product}}} = \underbrace{\langle \pmb{a},\pmb{b}\rangle}_{\substack{\text{linear algebra}\\\text{symmetric inner product}}} +\underbrace{\pmb{a}\land \pmb{b}}_{\substack{\text{Grassman algebra}\\\text{antisymmetric Wedge product}}}, \langle \pmb{a},\pmb{b}\rangle = \frac{1}{2}(\pmb{ab}+\pmb{ba}), \pmb{a}\land \pmb{b}=\frac{1}{2}(\pmb{ab}-\pmb{ba}) \ .
\end{equation}

This formulation led to a unified framework that can flexibly encode quaternions ($\mathbb{H}\cong \mathrm{Cl}_{0,2}(\mathbb{R})$), complex numbers ($\mathbb{C}\cong \mathrm{Cl}_{0,1}(\mathbb{R})$), and spacetime algebra ($\mathrm{Cl}_{3,1}(\mathbb{R})$). Furthermore, electromagnetic fields $\pmb{F} = \pmb{E}+\pmb{B}i$ may be examined by the $\mathrm{Cl}_{3,0}(\mathbb{R})$ representation using three bivectors representing the magnetic field $B$  \cite{brandstetter2022clifford,hestenes2003oersted}.
The geometric algebra $\text{Cl}_{p,q}(\mathbb{R})$ offers flexible encoding where multidimensional data may be embedded into higher dimensional spaces spanned by the $2^{p+q}$ combinations $\left\{1,\underbrace{e_1,e_2,\cdots,e_{p+q}}_{singltons},\underbrace{e_1e_2,\cdots,e_{p+q-1}e_{p+q}}_{pairs},...,\underbrace{e_1e_2\cdots e_{p+q}}_{n-tuples}\right\}$ of an orthogonal basis set, $\{e_1,e_2,\cdots,e_{p+q}\}$, where the algebraic structure is specified by the {\it{norm}} and the {\it{anticommutative properties}}
\begin{equation}
    e_i^2 =\|e_i\|^2=  1 \text{ for }1\leq i\leq p, \quad e_j^2 = \|e_j\|^2 = -1 \text{ for } p+1\leq j\leq p+q,\quad e_ie_j = -e_je_i \text{  for } i\neq j.
\end{equation}

Next, by transforming the multivector valued vector components of $\pmb{f}(x)$, we will explicate the definition of the Clifford Fourier transform in various situations.
\begin{itemize}[leftmargin=*,noitemsep,topsep=0pt]
\item \underline{Case 1: $\mathrm{Cl}_{2,0}(\mathbb{R})$}. The vector space $G^2$ of the algebra contains the basis $\{1,e_1,e_2,e_1e_2\}$, i.e., $G^2=\text{span}\{1,e_1,e_2,e_1e_2\}$. It's an interesting fact that $(e_1e_2)^2 = e_1e_2e_1e_2 = -e_1e_2e_2e_1=-1$. The bivector $e_1e_2$ is associated with a pseudoscalar (corresponding to highest grade basis) $i_2=e_1e_2, i_2^2=-1$. The pseudoscalar gives an alternative expression for the basis decomposition $\pmb{f}(x)=f_{0}+f_{1}e_1+f_{2}e_2+f_{12}e_{12}=1 (f_0(x)+f_{12}(x)i_2)+e_1(f_1(x)+f_2(x)i_2)$. In the algebraic constraints, $i_2$ behaves like the imaginary unit $i$,  hence the 2D Clifford Fourier transform, operating on spinor $(f_0(x)+f_{12}(x)i_2)$ and vector $(f_1(x)+f_2(x)i_2)$ component separately, is naturally defined as \cite{brandstetter2022clifford}
\begin{equation}
    \pmb{\hat{f}}(\xi) = \mathcal{F}\{\pmb{f}\}(\xi) = \int_{\mathbb{R}^2} \pmb{f}(x)e^{-2\pi i_2 \langle x,\xi\rangle}dx,\quad \forall \xi\in \mathbb{R}^2 ,
\end{equation}
\begin{equation}
    \pmb{f}(x) = \mathcal{F}^{-1}\{\mathcal{F}\{\pmb{f}\}\}(x) =  \int_{\mathbb{R}^2} \pmb{\hat{f}}(\xi)e^{2\pi i_2\langle x,\xi\rangle}d\xi,\quad \forall x\in \mathbb{R}^2 .
\end{equation}

\item \underline{Case 2: $\mathrm{Cl}_{0,2}(\mathbb{R})$}. Quarternions ($\mathbb{H}\cong \mathrm{Cl}_{0,2}(\mathbb{R})$) 
obey a similar decomposition for the Euler's formula, where $e^{|\xi|\pmb{\xi}} = \text{cos}|\xi|+\pmb{\xi}\text{sin}|\xi|$. The unit pure quaternion $\pmb{\xi}=b\pmb{\hat{i}}+c\pmb{\hat{j}}+d\pmb{\hat{k}}, b^2+c^2+d^2=1$ plays a similar role as the imaginary number $\pmb{i}$ in canonical Euler's formula.
Therefore   \cite{ell2014quaternion}
\begin{equation*}
    \pmb{\xi}^n=\begin{cases} (-1)^k \text{ if } n=2k,\\(-1)^k \pmb{\xi}\text{ if } n=2k+1 \end{cases} e^{\xi} = \sum_{n=0}^{\infty}(-1)^k\frac{|\xi|^{2k}}{(2k)!}+\pmb{\xi}\sum_{n=0}^{\infty}(-1)^k\frac{|\xi|^{2k+1}}{(2k)!}= \text{cos}|\xi|+\pmb{\xi}\text{sin}|\xi|\ .
\end{equation*}
This allows us to generalize the general full quarternion's polar representation as
\begin{equation}
    q=a+b\pmb{\hat{i}}+c\pmb{\hat{j}}+d\pmb{\hat{k}}=|q|\Big(\frac{a}{|q|}+\pmb{\xi}\frac{\sqrt{b^2+c^2+d^2}}{|q|}\Big)=|q|e^{\pmb{\xi}\phi}=|q|(\text{cos}\phi+\pmb{\xi}\text{sin}\phi),
\end{equation}
where $|q| = \sqrt{a^2+b^2+c^2+d^2}, \pmb{\xi}= \frac{b\pmb{\hat{i}}+c\pmb{\hat{j}}+d\pmb{\hat{k}}}{\sqrt{b^2+c^2+d^2}},\phi=\text{arctan}(\frac{\sqrt{b^2+c^2+d^2}}{a})$. The Clifford Fourier transform and quaternionic Fourier transform can be defined to take different forms giving rise to different properties \cite{hitzer2007quaternion,hitzer2013quaternion}
\begin{equation}
    \mathcal{F}^{cl}\{f\}(\underbrace{u_1,u_2}_{\pmb{u}}) = \int_{\mathbb{R}^2} f(\pmb{x}) e^{-2\pi e_1 u_1 x_1}e^{-2\pi e_2 u_2 x_2} d\pmb{x}\ ,
\end{equation}
\begin{equation}
    f(\pmb{x}) = (\mathcal{F}^{-1})^{cl}\{\hat{f}\}(\pmb{x}) = \int_{\mathbb{R}^2} \hat{f}(\pmb{u})e^{2\pi e_2 x_2 u_2}e^{2\pi e_1 x_1 u_1}d\pmb{u}\ .
\end{equation}
Notice that due to the non-commutativity the ordering of exponentials to obtain the original $f$ is crucial. The more specialized transform for quaternions is to sandwiching the function using exponentials  \cite{hitzer2007quaternion,hitzer2013quaternion}
\begin{equation}
    \mathcal{F}^{q}\{f\}(\underbrace{u_1,u_2}_{\pmb{u}}) = \int_{\mathbb{R}^2} e^{-2\pi e_1 u_1 x_1} f(\pmb{x})e^{-2\pi e_2 u_2 x_2} d\pmb{x}
\end{equation}
\begin{equation}
    f(\pmb{x}) = (\mathcal{F}^{-1})^{q}\{\hat{f}\}(\pmb{x}) = \int_{\mathbb{R}^2} e^{2\pi e_1 x_1 u_1}\hat{f}(\pmb{u})e^{2\pi e_2 x_2 u_2}d\pmb{u}\ ,
\end{equation}
where $e_1 = \hat{\pmb{i}}, e_2 = \hat{\pmb{j}}, e_1e_2=\hat{\pmb{i}}\hat{\pmb{j}}=\hat{\pmb{k}}$. \\ 

\item \underline{Case 3: Spacetime algebra $\mathrm{Cl}_{3,1}(\mathbb{R})$}.
We can extend the definition to complex signatures and obtaining transforms for spacetime algebras utilizing the pseudoscalars and the quarternion Fourier transform definition with metric signature $(+,+,+,-)$ \cite{hitzer2013quaternion,el2020heisenberg}

\begin{equation}
    \hat{f}(\pmb{u}) = \mathcal{F}\{f\}(\pmb{u}) = \int_{\mathbb{R}^{3,1}} e^{-2\pi e_0 ts} f(\pmb{x})e^{-2\pi i_3 \langle x,u\rangle} d^4\pmb{x}\ ,
\end{equation}
\begin{equation}
    f(\pmb{x}) = (\mathcal{F}^{-1})\{\hat{f}\}(\pmb{x}) = \int_{\mathbb{R}^{3,1}} e^{2\pi e_0 ts}\hat{f}(\pmb{u})e^{2\pi i_3 \langle x,u\rangle}d^4\pmb{u}\ ,
\end{equation}
where $f: \mathbb{R}^{3,1}\to \mathrm{Cl}_{3,1}(\mathbb{R})$, and $i_3$ is the pseudo-scalar in $\mathrm{Cl}_{3,0}(\mathbb{R})$, where the spacetime vectors and spacetime frequency are defined by $\pmb{x} = te_0 +x, x=x_1e_1 +x_2e_2+x_3e_3$ and $\pmb{u} =se_0 +u, u=u_1e_1 +u_2e_2+u_3e_3$ respectively.
\end{itemize}

The main challenge to defining a consistent exponential transform over non-commutative algebras, such as Clifford algebra and matrix algebra, is manifested by the Baker-Campbell-Hausdroff (BCH) condition
\begin{equation}
\begin{split}
    e^{M_1}e^{M_2} &= e^{M_1+M_2+\frac{1}{2}[M_1,M_2]+\frac{1}{12}([M_1,[M_1,M_2]]+[M_2,[M_2,M_1]])+\cdots}\\
    e^{M_1}e^{M_2} &\neq e^{M_1+M_2} \text{ for } [M_1,M_2]\neq 0 \ .
\end{split}
\end{equation}

Clifford-Fourier transforms have direct applications in deep learning and signal processing \cite{brandstetter2022clifford,ell2014quaternion,hitzer2013quaternion}. 
Several extension can be made to define Clifford-Laplace transforms \cite{schertzer2015multifractal,ludkovsky2012multidimensional}, however, the application of the transforms may need to be backed by experiments and evidence of tractability of the inversion formulas. This may need to be investigated separately in future prospective studies.

As a motivational driver, consider a functional magnetic resonance imaging (fMRI) signal $f: \mathbb{R}^{1,3}\to \mathbb{F}$, where $f$ maps the spatiotemporal position to some real or complex value at $(t,x,y,z)$:
\begin{equation}
w=f(t,x,y,z).    
\end{equation}
In the previous expression, $t\in \mathbb{R}^+$ is longitudinal time dimension and $x,y,z$ represent the spatial Cartesian coordinates. The distinction in Clifford algebra is a sign convention difference of choice for metric signature with $\mathrm{Cl}_{1,3}(\mathbb{R})$ correspond to metric signature  $(+,-,-,-)$.  Then, a natural way to extend laplace transform is to replace the canonical scalar product with the bilinear form \cite{schertzer2015multifractal}
\begin{equation}
    g(p) = \mathcal{L}_4(f)(p) := \int_{\mathbb{R}^{1,3}}
f(t,x,y,z)e^{\langle p,q\rangle}d^4 \pmb{x}
\end{equation}

where $p,q = t\gamma^0+x\gamma^1+y\gamma^2+z\gamma^3\in \mathrm{Cl}_{1,3}(\mathbb{R})$ and $\langle p,q\rangle$ is defined by the polarization identity $\langle p,q\rangle=\frac{1}{2}(Q(p+q)-Q(p)-Q(q))$ with $Q$ being the quadratic form, for example $p = p_0\gamma^0+p_1\gamma^1+p_2\gamma^2+p_3\gamma^3$, $Q(p)=t^2-x^2-y^2-z^2$ and $f: \mathbb{R}^{1,3}\to \mathbb{F}$
, $d^4\pmb{x} = dtdxdydz$, $\gamma_0^2=1$,$\gamma_1^2=\gamma_2^2=\gamma_3^2=-1$  

Clifford algebra valued representation can be obtained by convolving with Clifford-valued filters
promoting the original signal space to Clifford spectrum, where each point in the domain $(t,x,y,z)$ is associated with the spacetime Clifford algebra $\mathrm{Cl}_{1,3}(\mathbb{R})$ element through function mapping $F$. Under such procedure another laplace transform analog in the Clifford spectrum is $F: \mathbb{R}^{1,3}\to \mathrm{Cl}_{1,3}(\mathbb{R})$, we can define the (extended Clifford-algebra) Laplace transform $\mathcal{L}_{cl}$ over $\mathrm{Cl}_{1,3}(\mathbb{R})$ by:
\begin{equation}
\begin{aligned}
g(p) = \mathcal{L}_{cl}(f)(p) :&= \int_{\mathbb{R}^{1,3}}
F(t,x,y,z)e^{-pq}d^4 \pmb{x}
\\
&= \int_{\mathbb{R}^{1,3}}F(t,x,y,z)
\sum_{n=0}^{\infty}\frac{(-pq)^n}{n!}d^4\pmb{x} ,
\end{aligned}
\end{equation}
where $g(p)\in \mathrm{Cl}_{1,3}(\mathbb{R})$ and we need gamma matrix $\gamma^i$ representation for elements in the Clifford algebra to perform the full exponentiation operation. Although no new information is generated through this process, Clifford ``frequency'' may yield a new representation of the original signal $f$ encoding some (complex) physical symmetries. Furthermore, it also provides a distinction between the time and space component that we desire for analytics. At the current moment, we do not have a tractable inversion formula and a formal study of convergence is expected.

\subsection{Laplace transform on Groups}
The canonical analysis of Fourier analysis can be extended to topological groups from the result from Peter and Weyl \cite{peter1927vollstandigkeit}, which is known as non-commutative harmonics analysis\cite{gross1978evolution}. The forward and inverse generalized Fourier transform on compact group $G$ are given by \cite{folland2016course,munthe2006group,kondor2018clebsch,maslen1998efficient}

\begin{equation}
\begin{aligned}
\text{Forward}: \hat{f}(\rho_l)=\left[\mathcal{F}_Gf\right]_{l} &= \int_G f(g)\rho_l(g)dg, \\
\text{Backward}: \left[\mathcal{F}^{-1}_G\hat{f}\right]_{l} &= \sum_l d_{\rho_l} tr\left[\hat{f}(\rho_l)\rho_l(g^{-1})\right],
\end{aligned}
\end{equation}
where $dg=d\mu_G(g)$ and $\mu_G(g)$ is the Haar measure, which is an invariant volumetric measure on the group $G$, $d_{\rho_l}$ is the dimension of the representation $\rho_l: H\to \mathbb{C}^{d_{\rho_l}\times d_{\rho_l}}$, and $H$ is a subgroup of $G$. $\rho_l$ denotes the irreducible representation (irreps) of type $l$ and any unitary representation $\rho'$ for compact group $G$ can be decomposed into a direct sum $\oplus$ of irreducible representation via a change of basis. That is \cite{kondor2018generalization},
\begin{equation}
    \rho'(g) = D^{-1}(\underbrace{\rho_1\oplus\rho_2\oplus \cdots}_{\rho})D \approx D^{-1}\underbrace{\begin{bmatrix}
    \rho_1(g) & & \\
    & \ddots & \\
    & & \rho_N(g)
  \end{bmatrix}}_{\text{block diagonal form } \rho^{(N)}}D 
\end{equation}
the irreps are not necessary finite but in actual computation we truncate $\rho$ to the block diagonal form $\rho^{(N)}$ with some reasonable precision. Notice that the change of basis is realized by some transformation $D$, represented by a matrix when operating on finite dimensional vector spaces, or $D=\mathcal{F}_G$ for infinite dimensional vector spaces, $\forall f\in L^2(G)$. Let's consider two examples as special cases.

\underline{$SO(2)$ Example}.
The formulation is fairly straightforward in $SO(2)$, since the unit circle group in complex plane is isomorphic to $SO(2)$. This is becuase we can associate the complex proper rotation $e^{i\theta}\in U(1)$ with orthogonal matrices $\begin{pmatrix}
\cos\theta & \sin\theta \\
-\sin\theta & \cos\theta
\end{pmatrix}\in SO(2)$  i.e., $U(1)=S^1\cong SO(2)$. Since the one dimensional complex irreps $\rho_l(g\equiv e^{i\theta})=e^{il\theta},l\in \mathbb{Z}$ are the circular harmonics, which is exactly the complex Fourier-Euler basis $\left \{\sqrt{\frac{1}{2\pi}}e^{in\theta}\right \}_{n=-\infty}^{\infty}$ for 1D Fourier series in $L^2([0,2\pi])$. Therefore, Peter-Weyl theory for generalized Fourier transform is exactly the 1D Fourier series expansion with basis decomposition from $[0,2\pi]$, i.e.,
\begin{equation}
    f(x) = \overbrace{\sum_{l\in\mathbb{Z}}\frac{1}{2\pi}\underbrace{\int_{0}^{2\pi} f(\theta)e^{il\theta}d\theta}_{\mathcal{F}_Gf}\cdot e^{-ilx}}^{\mathcal{F}^{-1}_G\hat{f}} .
\end{equation}

It is also worth noting the connection to differential equations where circular harmonics ($SO(2)$) and spherical harmonics ($SO(3)$) correspond to the basis solutions of polar and spherical Laplace differential equations\cite{wang2022determinism}.\\

\underline{$SO(3)$ Example}.
$SO(3)$ case is slightly more complicated than $SO(2)$, since the irreducible representations are the Wigner D matrices in $S^2$ represent spherical harmonics. The group element $R\in SO(3)$ can be parametrized by three Euler Angles $\alpha,\gamma\in [0,2\pi)$ and $\beta\in[0,\pi]$ with \cite{lee2018real}
\begin{equation}
    R(\alpha,\beta,\gamma) = 
    \begin{bmatrix} 
	\text{cos}\alpha & \text{sin}\alpha & 0 \\
	-\text{sin}\alpha & \text{cos}\alpha & 0\\
	0 & 0 & 1 \\
	\end{bmatrix}
	\begin{bmatrix} 
	1 & 0 & 0 \\
	0 & \text{cos}\beta & \text{sin}\beta\\
	0 & -\text{sin}\beta &  \text{cos}\beta \\
	\end{bmatrix}
    \begin{bmatrix} 
	\text{cos}\gamma & \text{sin}\gamma & 0 \\
	-\text{sin}\gamma & \text{cos}\gamma & 0\\
	0 & 0 & 1 \\
	\end{bmatrix} .
\end{equation}
The parameters represent the $z-x-z$ Euler angles. The irreducible matrix representations for $SO(3)$ are presented by the Wigner D matrices. A Wigner D matrix of type $l$ $D^{(l)}\equiv [D_{mn}^{l}]_{m,n=-l}^{l}$ is a $(2l+1)\times(2l+1)$ matrix and the Fourier expansion on $SO(3)$ is given by
\begin{equation}
    f(R) = \sum_{l\geq 0}\sum_{m=-l}^l\sum_{n=-l}^l (2l+1) \left (\underbrace{\int_{SO(3)}f(R)D_{mn}^{(l)}(g)dg}_{\hat{f}_{mn}^{(l)}}\right ) D_{nm}^{l}(R^{-1}) .
\end{equation}

Unlike Peter-Weyl theory, which operates on compact groups \cite{peter1927vollstandigkeit}, most work have tried to extend Laplace transform formulation on semigroups or \underline{locally} compact group scenarios \cite{mackey1948laplace,peng1998laplace}. Mackey's \cite{mackey1948laplace} approach to study locally compact abelian group G is to draw the analogy of classical laplace and fourier transform analogy $e^{-st}=e^{-Re(s)t}e^{-i\cdot Im(s)t}$ and generalizing the first term to real-character $r(t)$(taking value on the positive real number with $r(x\cdot y)=r(x)\cdot r(y)$) and the second term to ordinary character $\chi_{y}(t)$(taking values on the unit complex circle with $\chi_{y_1+y_2}=\chi_{y_1}\chi_{y_2}$) :$\mathcal{L}f(\chi_y)=\int_G f(t)r(t)\chi_y(t)dt$. Let's consider the following example. \\

\underline{Laurent Series Example} If $t\in G=(\mathbb{Z},+)$ is the additive integer group, $y\in \hat{G}=(\mathbb{T},\cdot)$ is the multiplicative circle group, $r(t)\in \mathbb{R}_{+}$. This degenerates to the discrete laplace transform analogy (Laurent series or bilateral Z-transform)
\begin{equation}
    \mathcal{L}f(\chi) = \sum_{n=-\infty}^{\infty}f(n)z^n.
\end{equation}
\subsection{Laplace transform as a special case for Meijer-G symbolic computation}
One of the main features of the general Meijer-G functions is their compact representation, which makes them ideal for abstract symbolic computing.

\begin{equation}
\begin{split}
    G_{p,q}^{m,n}(a_1,...a_p;b_1,b_2,...b_q|z)\equiv G_{p,q}^{m,n}\left(z\bigg| \begin{array}{c}
a_p\\
b_q
\end{array}\right)\equiv G_{p,q}^{m,n}\left(z\bigg| \begin{array}{c}
\overbrace{a_1,...,a_n}^{\text{n component}};\overbrace{a_{n+1},...,a_{p}}^{\text{p-n component}}\\
\underbrace{b_1,...,b_m}_{\text{m component}};\underbrace{b_{m+1},...,b_{q}}_{\text{q-m component}}
\end{array}\right)=\\\frac{1}{2\pi i}\int_{C}\frac{\Gamma(b_1-s)...\Gamma(b_m-s)\Gamma(1-a_1+s)...\Gamma(1-a_n+s)}{\Gamma(1-b_{m+1}+s)...\Gamma(1-b_q+s)\Gamma(a_{n+1}-s)...\Gamma(a_{p}-s)}z^sds\ .
\end{split}
\end{equation} 

The  Meijer-G functions are closed under the action of the Laplace transform \cite{beals2013meijer}. Therefore, transformation and inversion of signals can be approximated by symbolic Meijer-G computation (See \cite{alaa2019demystifying} for a neural network symbolic regression example). The closedness property can be seen from the fact that the kernel $e^{-st}$ is a Meijer-G function where
\begin{equation}
    e^{-sx} = G_{0,1}^{1,0}\left(sx\bigg|\begin{array}{c}
;\\
0;\end{array} \right) 
\label{meijergexp}
\end{equation}
and integral of the product of two Meijer-G functions is another Meijer-G function \cite{mathai2006generalized}

\begin{equation}
\begin{split}
&\int_{0}^{\infty}x^{\sigma -1} G_{p,q}^{m,n}\left(wx\bigg|\begin{array}{c}
a_p\\
b_q\end{array}\right) G_{\gamma,\delta}^{\alpha,\beta}\left(\eta x^{k/\rho}\bigg|\begin{array}{c}
c_{\gamma}\\
d_{\delta}\end{array}\right)dx\\&=C_1\cdot G_{\rho\gamma+kq,\rho\delta+k\rho}^{\rho\alpha+kn,\rho\beta+km}
\Bigg(\frac{\eta^{\rho}\rho^{\rho(\gamma-\delta)}}{w^kk^{k(p-q)}}\bigg|\begin{array}{c}
\overbrace{\Delta(\rho,c_1),...,\Delta(\rho,c_{\beta}),\Delta(k,1-b_1-\sigma),...\Delta(k,1-b_m-\sigma)}^{\beta\rho+km};\\
\underbrace{\Delta(\rho,d_1),...,\Delta(\rho,d_{\alpha}),\Delta(k,1-a_1-\sigma),...\Delta(k,1-a_n-\sigma)}_{\rho\alpha+kn};\end{array}\\
&\begin{array}{c}
\overbrace{\Delta(k,1-b_{m+1}-\sigma),...,\Delta(k,1-b_{q}-\sigma),\Delta(\rho,c_{\beta+1}),...,\Delta(\rho,c_{\gamma})}^{\rho(\gamma-\beta)+k(q-m)}\\
\underbrace{\Delta(k,1-a_{n+1}-\sigma),...,\Delta(k,1-a_{p}-\sigma),\Delta(\rho,c_{\alpha+1}),...,\Delta(\rho,c_{\delta})}_{\rho(\delta-\alpha)+k(p-n)}\end{array}\Bigg),
\end{split}
\label{meijergg-gen}
\end{equation}
where
$$C_1=w^{-\sigma}\rho^{1+\frac{1}{2}(\delta-\gamma)+\sum_{j=1}^{\gamma}c_j-\sum_{j=1}^{\delta}d_j} k^{\sigma(p-q)+\frac{1}{2}(q-p)+\sum_{j=1}^pa_j-\sum_{j=1}^qb_j}(2\pi)^{\frac{\rho-1}{2}(2\alpha+2\beta-\gamma-\delta)}(2\pi)^{\frac{k-1}{2}(2m+2n-p-q)}$$
and the notation $\Delta(\cdot,\cdot)$ is a short hand for a set of indexing argument in the Meijer-G function. 
For example, $\Delta(\rho, c_j)=\{\frac{c_j+1}{\rho},\frac{c_j+2}{\rho},...,\frac{c_j+\rho-1}{\rho}\}$, and the indices by default are non-negative integers. The complete proof for the Meijer-G function property is given in (Appendix \ref{meijerg}). Using \ref{meijergg-gen},\ref{meijergexp} and setting $\sigma=k=\rho=1$, we can get one possible version of laplace transform
\begin{equation}
\int_{0}^{\infty}e^{-wx}G_{\gamma,\delta}^{\alpha,\beta}\left(\eta x\bigg|\begin{array}{c}
c_{\gamma}\\
d_{\delta}\end{array}\right)dx=\frac{1}{w}G_{\gamma+1,\delta}^{\alpha,\beta+1}\left(\frac{\eta}{w} \bigg|\begin{array}{c}
0,c_{\gamma}\\
d_{\delta}\end{array}\right) .
\end{equation}

Some animations and a supplementary tutorial can be accessed on \url{https://www.socr.umich.edu/TCIU/HTMLs/Chapter4_Laplace_Transform_Meijer_G_Functions.html}. 
\section{Contributions and applications}
Our numerical strategy imposes  an assumption about the degree of well-behavedness on the signals. In this study, we do not attempt to decode any information near function singularities, which often turn out to contain useful physical information\cite{dingle1971asymptotic}. We utilize the Laplace transform as it provides a natural extension of time series to kime surfaces. Our motivation and justification of this approach is shown below.  

\begin{itemize}
    \item This \textbf{1D to 2D process} transforming the time domain to complex time space reflects many human experiences, such as multiple listener perceptions a joint experience of an audio signal (or musical performance), a temporally dynamic visual scene, or a joint interpretation of textual content or a series of 2D cine images.
    \item There is a need for a robust \textbf{invertible} pipeline time-series $\to$ kime-surfaces $\to$ time-series, and the Laplace transform is invertible for square integrable functions.
    \item From a probabilistic perspective, a moment generating function uniquely defines the distribution of a
random variable and characterize the sequence of its statistical moments.
\item Finally, we note that Laplace transform interfaces with many branches of mathematics and we provide a pictorial overview of this interaction (See Figure \ref{laplace-landscape}).
\end{itemize}
 \tikzstyle{descript} = [text = black,align=center, minimum height=1.8cm, align=center, outer sep=0pt,font = \footnotesize]
 
 \tikzstyle{des} = [text = black,align=center, minimum height=0.5cm, align=center, outer sep=0pt,font = \footnotesize]
 \tikzstyle{activity} =[align=center,outer sep=1pt]

\begin{figure}[h]
    \centering
   \begin{tikzpicture}[>=stealth, node distance=3cm]
\coordinate (main) at (0,0); 


 \node[rectangle, draw, text width=2cm, align=center,fill=ColorTwo!15,text=ColorTwo](main)  {Laplace Transform};

 \node (w) [rectangle, draw, text width=2cm, align=center, left of=main,fill=GreenYellow,text=Maroon] {Probabilistic Extensions};

\node [font=\footnotesize] (top)    at (-4,2) {$\bullet$Stieltjes: Double integration of laplace };
    \node [font=\footnotesize] (middle) at (-4,1.5)  {$\bullet$ Spectral analog of MGF (r.v. ensemble)};
    \node [font=\footnotesize] (bottom) at (-4,1)  {$\bullet$ ``non-commutativity''};

 \node (n) [rectangle, draw, text width=2cm, align=center, yshift=2cm,fill=ColorOne!15,text=ColorThree] {Signal Processing};
 \node [font=\small] (top)    at (0
,3) {$\bullet$Fourier, Radon, Z transforms};

\node (e)[rectangle, draw, text width=4cm, align=center,right of=main, xshift=1cm, ,fill=ColorThree!15,text=blue] {Asymptotic Analysis, Resummation Methods,\\ Divergent Series};

\node (be)[above left of=e, xshift=2cm,yshift=-0.9cm]  {${\scriptstyle \text{Borel Laplace:} \sum_{n=0}^{2N} \frac{c_n}{x^{n+1}} = \int_0^{\infty} dt e^{-xt}\overbrace{\scriptstyle\Big(\sum_{n=0}^{2N}\frac{c_n}{n!}t^n\Big)}^{B_{2N}[f](t)} }$};

 \node (sw) [rectangle, draw, text width=2cm, align=center, below left of=main,fill=Emerald!15, text=Black] {Symbolic Computation};
 
\node (swe) [above of=sw, xshift=-1.5cm,yshift=-2cm,text width=6cm, align=center] {
${\scriptstyle \text{``Parametric''} \int_0^{\infty}\text{ integration: Meijer G}}$\\ ${\scriptstyle\text{\underline{Closed} under Laplace, Convolution}}$};

 \node (se) [rectangle, draw, text width=2cm, align=center, below right of=main,fill=Emerald!15, text=ColorOne] {Kernel Persepectives};
\node (see1) [right of=se, xshift=0.0cm,text width=6cm, align=center] {
${\scriptstyle \text{Markov transition, } f(x\mid Y)\text{ kernel}}$};

\node (see) [below of=see1, text width=6cm, align=center,yshift=2.7cm] {
${\scriptstyle \text{Mercer's theorem; RHKS }}$};

 \draw [->] (main) -- (w);
 \draw [->] (main) -- (n);
 \draw [->] (main) -- (e);
 \draw [->] (main) -- (sw);
  \draw [->] (main) -- (se);

\end{tikzpicture}
    \caption{Schematic of the Laplace transform landscape.}
    \label{laplace-landscape}
\end{figure}
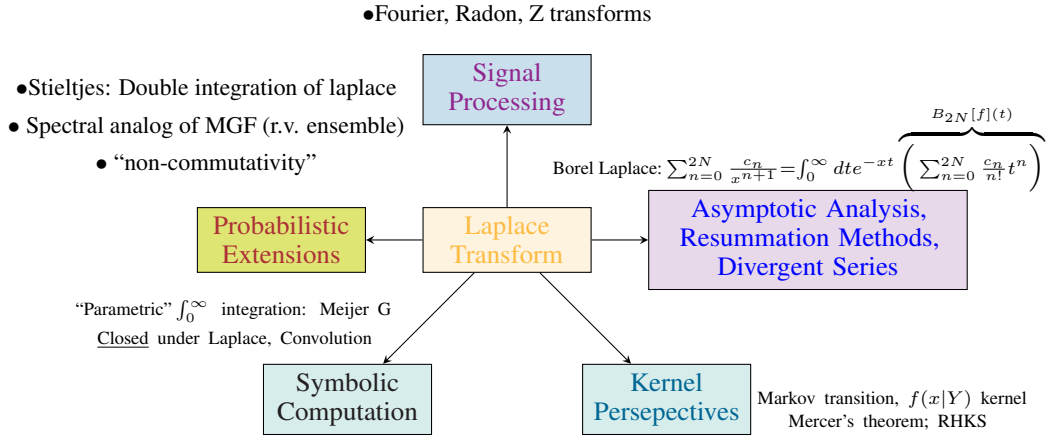

Of course, there are multiple alternative approaches, e.g., Meijer G functions provide natural generalization for almost any real valued special function and thus offers natural kimesurface representations.
The following Lemma, suggests the properties of this specific approach. Namely, that for analytic functions, the Laplace transform naturally \textbf{emerges} from only a pair first principles - {\it{linearity}} and {\it{algebraic-differentiability}}. This highlights the properties and application scenarios of this proposed approach\\

\begin{lemma}
Given any analytic function $f$, the following two conditions determine a unique integral transform:

\begin{itemize}
    \item Linearity: $\mathcal{L}[\alpha f+\beta g]=\alpha \mathcal{L}[f]+\beta \mathcal{L}[g]$
    \item Algebraic Derivatives: $\mathcal{L}[f'](s) = s\mathcal{L}[f](s)-f(0)$
\end{itemize}

The Laplace transform $\mathcal{L}[f](s)=\int_0^{\infty} e^{-st}f(t)dt$ is the \textbf{only} transform that satisfies these two properties (linearity and algebraic differentiability). 
\end{lemma}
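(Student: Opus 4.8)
The plan is to use analyticity to reduce the problem to monomials, on which the two axioms force a rigid recursion that pins the transform down completely. Write $f(t)=\sum_{n\ge 0}a_n t^n$ with $a_n=f^{(n)}(0)/n!$. By linearity (extended to the convergent series; see the caveat below) we have $\mathcal{L}[f](s)=\sum_{n\ge 0}a_n\,\mathcal{L}[t^n](s)$, so it suffices to determine $m_n(s):=\mathcal{L}[t^n](s)$ for every $n\ge 0$.

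First I would settle the base case. Applying the algebraic-derivative rule to the constant function $1$ gives $\mathcal{L}[0](s)=s\,m_0(s)-1$, while linearity alone forces $\mathcal{L}[0]=\mathcal{L}[0\cdot 1]=0$; hence $m_0(s)=1/s$. Next, for $n\ge 1$ note $(t^n)'=n\,t^{n-1}$, so by linearity $\mathcal{L}[(t^n)'](s)=n\,m_{n-1}(s)$; on the other hand the derivative rule together with $0^n=0$ gives $\mathcal{L}[(t^n)'](s)=s\,m_n(s)$. Equating the two yields $m_n(s)=\tfrac{n}{s}\,m_{n-1}(s)$, and an immediate induction gives $m_n(s)=n!/s^{n+1}$.

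Assembling the pieces, any transform satisfying the two axioms must equal $\mathcal{L}[f](s)=\sum_{n\ge 0}a_n\,n!/s^{n+1}$. It then remains to recognize the right-hand side as the classical integral: term-by-term integration gives $\int_0^\infty e^{-st}f(t)\,dt=\sum_{n\ge 0}a_n\int_0^\infty e^{-st}t^n\,dt=\sum_{n\ge 0}a_n\,n!/s^{n+1}$, valid for $\mathrm{Re}(s)$ large enough that the coefficient growth of $f$ is dominated by the exponential decay. For the converse direction one checks that the classical $\mathcal{L}$ does satisfy the axioms: linearity is immediate, and integration by parts gives $\int_0^\infty e^{-st}f'(t)\,dt=-f(0)+s\,\mathcal{L}[f](s)$. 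This establishes both existence and uniqueness.

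The main obstacle is the step hidden in ``extended to the convergent series'': the stated linearity is only finitary, so a priori the axioms pin the transform down only on the space of polynomials. To reach all analytic $f$ one must assume (or build into the definition of ``integral transform'') that $\mathcal{L}$ is continuous with respect to a topology in which the polynomial partial sums of $f$ converge to $f$ — for instance uniform convergence on compacta together with a growth bound, or convergence in the relevant $L^2([0,\infty))$/$H^2(\mathbb{C}_+)$ norm. A secondary point deserving care is the domain in $s$: the recursion $m_n=\tfrac{n}{s}m_{n-1}$ is purely formal, and identifying $\sum_n a_n\,n!/s^{n+1}$ with a genuine analytic function of $s$ requires $f$ to be of (at most) a fixed exponential order, so the conclusion is most cleanly phrased for functions of exponential type, or else restricted to the common half-plane of convergence.
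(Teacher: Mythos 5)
Your proposal is correct and follows essentially the same strategy as the paper's proof: use linearity on $f=0$ to get $\mathcal{L}[0]=0$, apply the algebraic-derivative rule to $f=1$ to pin down $\mathcal{L}[1]$, obtain a recursion for the transforms of monomials from the derivative rule, and then extend to all analytic $f$ via Taylor expansion and (countable) linearity. The only real difference is procedural rather than conceptual: the paper carries out the computation for the \emph{finite-support} version of the derivative rule, $\mathcal{L}[f'](s)=s\mathcal{L}[f](s)-f(0)+e^{-2\pi s}f(2\pi)$, which produces lower incomplete gamma functions $\gamma(n+1,2\pi s)/n!$ in place of your clean $n!/s^{n+1}$, and then asserts that the unbounded-domain case "follows by extension" — you simply prove the unbounded case directly, which is the one actually stated in the lemma. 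I would add that your explicitly flagged caveat — that the stated linearity axiom is finitary and must be upgraded to some form of continuity/countable additivity to pass from polynomials to general analytic $f$ — is a genuine gap present (but not acknowledged) in the paper's proof as well, since the step $\mathcal{L}\bigl[\sum_n f^{(n)}(0)t^n/n!\bigr]=\sum_n f^{(n)}(0)\,\mathcal{L}[t^n/n!]$ is used there without justification; you are right to call it out.
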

For the special case of signals with finite support, without loss of generality, we restrict the function support to the compact interval $[0,2\pi]$. In this case, the algebraic derivative condition is slightly modified to reflect the potential for funciton periodicity:

\begin{itemize}
    \item Algebraic Derivatives(finite): $\mathcal{L}[f'](s) = s\mathcal{L}[f](s)-f(0)+e^{-2\pi s }f(2\pi) .$
\end{itemize}

In other words, linearity and algebraic differentiability completely characterize the (finite domain) Laplace transform  $\mathcal{L}[f](s)=\int_0^{2\pi} e^{-st}f(t)dt$.

The proof of this lemma is provided in Appendix \ref{two-rules}.

In summary,
\begin{enumerate}
    \item The algebraic derivative condition suggests that this approach may provide a unique advantage for giving a \textbf{convenient and robust reconstruction of the first order dynamic function} $f'$, using discretely sampled observations, where the boundary term can be calculated from our spline interpolation. Similarly, the $s\mathcal{L}[f](s)$ term can also be numerically  calculated.
    \item The canonical linearity implies that this methodology naturally inherits the addition and scalar multiplication structure into the kime domain.
    \item Laplace transform is useful to analyze Linear Time Invariant systems, which encode similar
information for repeated observational processes using different sample points
anchored at the specific observed times. 
\end{enumerate}

\section{Numerical LT/ILT Methods}

We will start by formulating domain-partitioning and function base representation strategies for estimating the forward and inverse Laplace transforms.

\subsection{Forward Laplace Transform}
In practice, many observed signals represent windowed versions of discrete processes with restricted domains (finite support). Such discrete signals can be approximated by continuous analytical functions, splines, polynomials, Taylor or Fourier series expansions. Some of these interpolations are based on power functions like $f_{n}(t)=t^{n}$.

In this section, we will demonstrate an approach for Laplace transforming observed discrete data by spline-interpolation with exact integration of the corresponding analytical functions. 
In such analytical expansions, reducing the support of power functions is necessary to control the (model or base function) extreme growth, or decay, as the argument tends to infinity. 

The characteristic function, a.k.a. indicator function, of a half-open real interval $[a, b), \chi_{[a, b)}(t),$ may be expressed in terms of the Heaviside function, $H_{a}(t):$

\begin{equation}
\begin{array}{l}
\underbrace{\chi_{[a, b)}}_{\text {characteristic indicator }}(t)=H(t-a)-H(t-b)=\left\{\begin{array}{l}
0, t<a \\
1,a \leq t<b \\
0, t \geq b
\end{array}\right. ;\ \ 
\underbrace{H}_{\text {Heaviside }}(t)=\left\{\begin{array}{l}
0, t<0 \\
1, t \geq 0
\end{array}\right. .
\end{array} 
\end{equation}

For instance, to constrain the support of the power function $f_{n}(t)=t^{n}$ to the interval $[0,2 \pi),$ we can multiply it by $\chi_{[0,2 \pi)}(t),$ i.e.,
\begin{equation}
f_{n, \operatorname{supp}[0,2 \pi)}(t)=f_{n}(t) \times \chi_{[0,2 \pi)}(t)=\left\{\begin{array}{cc}
f_{n}(t)=t^{n}, & 0 \leq t<2 \pi \\
0, & \text { otherwise }
\end{array}\right. .    
\end{equation}
As a linear operator, the Laplace transform of the power function compactly supported over the interval $[a, b)$ will be:
\begin{equation}
\mathcal{L}\left(f_{n, \operatorname{supp}[a, b)}\right)(z)=\mathcal{L}\left(f_{n} \chi_{[a, b)}\right)(z)=\underbrace{\mathcal{L}\left(f_{n} H(t-a)\right)(z)}_{A_{a}(z)}-\underbrace{\mathcal{L}\left(f_{n} H(t-b)\right)(z)}_{A_{b}(z)} .    
\end{equation}

We need three derivations to explicate this Laplace transform of the windowed power function.
First, let's validate that the Laplace transform of the power function is $\mathcal{L}\left(f_{n}(\cdot) H(\cdot)\right)(z)=\frac{n !}{z^{n+1}}$. This is based on Cauchy-Goursat theorem [58], which states that for a simply connected open set $D \subseteq \mathbb{C}$ with a boundary $\partial D$ representing the closed contour bounding $D,$ a function $f: D \rightarrow \mathbb{C}$ that is holomorphic everywhere in $D$ :
\begin{equation}
\oint_{\partial D} f(z) d z=0 .
\end{equation}

This is the case since all holomorphic functions
\begin{equation}
f(z)=u(z)+iv(z),\quad z=x+iy,\quad dz=dx+idy
\end{equation}

satisfy the Cauchy-Riemann equations:

\begin{equation}
\begin{aligned}
\left| \begin{array}{cc} 
\frac{\partial v}{\partial x}+\frac{\partial u}{\partial y}=0\\ \frac{\partial v}{\partial y}-\frac{\partial u}{\partial x}=0
\end{array} \right .
\end{aligned} .
\end{equation}

Thus,
\begin{equation}
    \begin{aligned}
\oint_{\partial D} f(z) d z=& \oint_{\partial D}(u+i v)(d x+i d y)=\oint_{\partial D}(u d x-v d y)+i \oint_{\partial D}(v d x+u d y) \underbrace{=}_{\text {Green's Thm}} \\
& \iint_{D}\left(-\frac{\partial v}{\partial x}-\frac{\partial u}{\partial y}\right) d x d y+{i}\iint_{D}\left(\frac{\partial u}{\partial x}-\frac{\partial v}{\partial y}\right) d x d y=0 .
    \end{aligned}
\end{equation}

Let's use the following change of variables transformation to show that $\mathcal{L}\left(f_{n}(\cdot) H(\cdot)\right)(z)=\frac{n !}{z^{n+1}}$. We start with
\begin{equation}
\begin{array}{c}
u=z t=(x+i y) t, \quad\epsilon \leq t=\frac{u}{z} \leq L,\quad
z d t=d u
\end{array} 
\end{equation}

Then,
\begin{equation}
\int_{\epsilon}^{L} t^{n} e^{-z t} d t=\oint_{C_{3}}\left(\frac{u}{z}\right)^{n} e^{-u} d \frac{u}{z}= 
\oint_{x \epsilon+i y \epsilon}^{x L+i y L}\left(\frac{u}{z}\right)^{n} e^{-u} d \frac{u}{z}=\frac{1}{z^{n+1}} \oint_{x \epsilon+i y \epsilon}^{x L+i y L} u^{n} e^{-u} d u\ .
\end{equation}

Let's denote the general path line integral in the
complex plane by $I_{C}=\int_{C} u^{n} e^{-u} d u$ and consider the closed (boundary) contour $C=C_{1}+C_{2}-C_{3}+C_{4},$ as in Figure \ref{fig:ct}. We will apply the Cauchy-Goursat theorem over the entire boundary $\partial D=C_{1}+C_{2} - C_{3}+C_{4}$ of the open set $D$.
The main idea is to transform the $I_{C_3}$, which is the Laplace transform, into a more amenable term $I_{C_1}$, as follows
\begin{figure}[H]
\vspace{-0.8cm}
\renewcommand\tabularxcolumn[1]{m{#1}}
\begin{tabularx}{\linewidth}{@{} p{6cm}X @{}}
\adjustimage{width=\linewidth, height=2.5cm,valign=c}{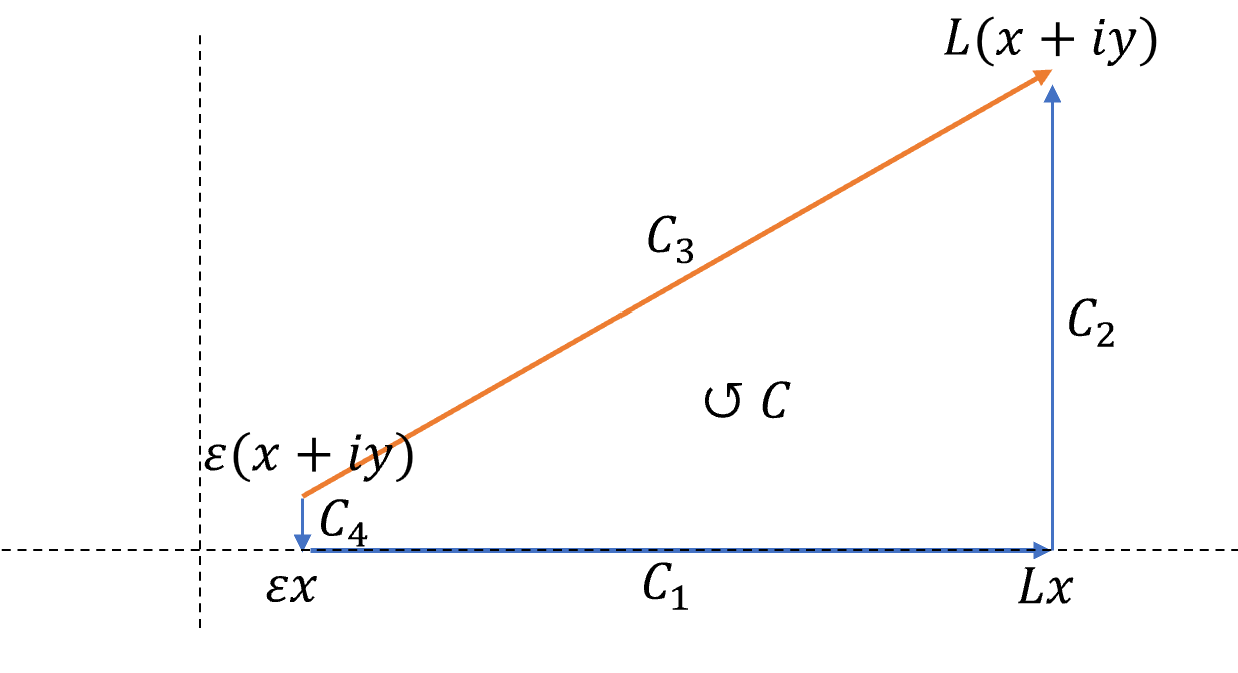}
\caption{Integration contour}
    &   \begin{equation}
\begin{aligned}
\underbrace{\oint u^{n} e^{-u} d u}_{I_C} &=\underbrace{\oint_{C_{1}} u^{n} e^{-u} d u}_{I_{C_1}}+\underbrace{\oint_{C_{2}} u^{n} e^{-u} d u}_{I_{C_{2}}} \\
&-\underbrace{\oint_{C_{3}} u^{n} e^{-u} d u}_{I_{C_{3}}}+\underbrace{\oint_{C_{4}} u^{n} e^{-u} d u}_{I_{C_{4}}}=0 .
\end{aligned}
\end{equation}              \\ 
  \label{fig:ct}
\end{tabularx}
\vspace{-1.5cm}
    \end{figure}
Let's show that $\lim _{L \rightarrow \infty} I_{C_{2}}=\lim _{\epsilon \rightarrow 0} I_{C_{4}}=0$:
\begin{equation}
  \begin{array}{l}
\left|I_{C_{2}}\right|=\left|\oint_{C_{2}} u^{n} e^{-u} d u\right|= 
\left\{\begin{array}{c}u=Lx+iy'\\du=idy'\end{array}\right\}=\left|\int_{0}^{L}(Lx+iy')^n e^{-(Lx+iy')}\cdot idy'\right| \\
\leq \max _{u \in C_{2}} L\left| u^{n} e^{-u}\right|=\max _{u \in C_{2}} L\left| u^{n}\right|e^{-R e(u)} =L \left |(\sqrt{x^2+y^2}L)^{n}\right | e^{-Lx}  \underset{L \rightarrow \infty}{\longrightarrow} 0\ . \\
\left|I_{C_{4}}\right|=\left|\oint_{C_{4}} u^{n} e^{-u} d u\right|=
\left\{\begin{array}{c}u=\epsilon x+iy\\du=idy\end{array}\right\}=\left|\int_{0}^{\epsilon}(\epsilon x+iy)^n e^{-(\epsilon x+iy)}\cdot idy\right|\\
\leq \max _{u \in C_{4}}\left| u^{n} e^{-u}\right|\epsilon=\max _{u \in C_{4}}\left| u^{n}\right|e^{-R e(u)} \epsilon =
\epsilon \left |(\sqrt{x^2+y^2}\epsilon)^{n}\right | e^{-\epsilon x}
\underset{\epsilon \rightarrow 0}{\longrightarrow} 0\ .
\end{array}   
\end{equation}
Therefore, in the limit, $I_{C}=I_{C_{1}}-I_{C_{3}}=0, I_{C_{1}}=I_{C_{3}},$ and
\begin{equation}
\lim _{\epsilon \rightarrow 0^{+}} \lim _{L \rightarrow \infty} I_{C_{1}}(\epsilon, L)=\lim _{\epsilon \rightarrow 0^{+}} \lim _{L \rightarrow \infty} \oint_{\epsilon x}^{L x} u^{n} e^{-u} d u \underbrace{=}_{I_{C_{1}}=I_{C_{3}}} \int_{0}^{\infty} u^{n} e^{-u} d u\ . 
\end{equation}
Summarizing,
\begin{equation}
\begin{split}
\mathcal{L}\left(f_{n}(\cdot) H(\cdot)\right)(z)&=\frac{1}{z^{n+1}}\lim _{\epsilon \rightarrow 0^{+}} \lim _{L \rightarrow \infty} I_{C_3}(\epsilon, L)=
\frac{1}{z^{n+1}}\lim _{\epsilon \rightarrow 0^{+}} \lim _{L \rightarrow \infty} I_{C_1}(\epsilon, L)\\&=
\frac{1}{z^{n+1}}
\int_{0}^{\infty} u^{(n+1)-1} e^{-u} d u
=\frac{\Gamma(n+1)}{z^{n+1}}=\left\{\begin{array}{l}
\frac{n !}{z^{n+1}}, n=\text { integer } \\
\frac{\Gamma(n+1)}{z^{n+1}}, n>0
\end{array}\right . .
\end{split}
\end{equation}

Next, to estimate the term $A_{a, n}(z)=\mathcal{L}\left(f_{n}(t) H(t-a)\right)(z), n\in\mathbb{Z}$  noting $s=t-a$:
\begin{equation}
\begin{split}
A_{a, n}(z)&=\mathcal{L}\left(f_{n}(t) H(t-a)\right)(z)=e^{-a z} \mathcal{L}\left(f_{n}(s+a) H(s)\right)(z)=e^{-a z} \mathcal{L}\left((s+a)^{n} H(s)\right)(z)\\&=e^{-a z} \int_{0}^{\infty}(s+a)^{n} e^{-z s} d s=e^{-a z} \int_{0}^{\infty}\left(\sum_{k=1}^{n}\left(\begin{array}{l}
n \\
k
\end{array}\right) s^{k} a^{n-k}\right) e^{-z s} d s
\\&=
e^{-a z} \sum_{k=1}^{n}\left[\left(\begin{array}{l}
n \\
k
\end{array}\right) a^{n-k} \int_{0}^{\infty} 
s^k e^{-z s} d s\right]=
e^{-a z} \sum_{k=0}^{n}\left [\left(\begin{array}{l}
n \\
k
\end{array}\right) a^{n-k}
\underbrace{\frac{k !}{z^{k+1}}}_{\mathcal{L}\left(s^{k}H(s)\right)(z)}
\right] .
\end{split}
\end{equation}
The windowed Laplace transform power function can be obtained by substituting $a$ with $b$
\begin{equation}
\begin{array}{c}
\mathcal{L}\left(f_{n, \text { supp }[a, b)}\right)(z)=\mathcal{L}\left(f_{n} \chi_{[a, b)}\right)(z)=A_{a, n}(z)-A_{b, n}(z)= \\
e^{-a z} \sum_{k=0}^{n}\left[\left(\begin{array}{l}
n \\
k
\end{array}\right) a^{n-k} \frac{k !}{z^{k+1}}\right]-e^{-b z} \sum_{k=0}^{n}\left[\left(\begin{array}{l}
n \\
k
\end{array}\right) b^{n-k} \frac{k !}{z^{k+1}}\right] .
\end{array} 
\end{equation}
In practice, integration by parts may be used to implement the Laplace transform in a closed-form analytical expression
\begin{equation}
\int t^{n} e^{-z t} d t=-\frac{t^{n}}{z} e^{-z t}+\frac{n}{z} \int t^{n-1} e^{-z t} d t = \frac{1}{z^{n+1}} e^{-z t}\left(\sum_{k=0}^{n} c_{k} z^{k}\right),
\end{equation}

where we need to estimate the coefficients $c_{k}$ and we can transform the indefinite to definite integral over the support $[a, b)$.

Now, suppose we are dealing with discrete data sampled over a finite time interval $[a, b]$. One strategy to compute the Laplace transform of the discrete signal is by direct numerical integration. Alternatively, we can first approximate the data using some base-functions that permit a closed-form Laplace transform analytical representations. Then, we can aggregate the piece-wise linear transforms of all the base functions to derive an analytical approximation to the Laplace transform of the complete discrete dataset. 
For instance, suppose we decide to preprocess the discrete signal by using a fourth-order quartic spline interpolation of the discrete dataset to model the observations to guard against model misspecfication. Thus, we will numerically obtain a finitely-supported spline model $f(t)=f_{n=4, \text { supp }[a, b)}(t)$ of the data over the time interval $t \in$
$[a, b)$ [59]. The Laplace transform for this spline model may be computed exactly as an analytical function, by integrating it against the exponential term:
\begin{equation}
\mathcal{L}\left(f_{n=4, \text { supp }[a, b)}\right)(z)=\int_{0}^{\infty} f_{n=4, \operatorname{supp}[a, b)}(t) e^{-z t} d t=\int_{a}^{b} f_{n=4}(t) e^{-z t} d t .
\end{equation}

Of course, higher order spline interpolations can be used in practice to reflect specific prior assumptions about the process, e.g., inductive bias. For simplicity, we will work with a fixed hyperparameter value ($4$) and assume that the fourth-order spline model smoothly concatenates a set of $d$ 4-order polynomial functions, $f_{n=4, \text { supp }[a, b)} \equiv\left\{f_{k, n=4, \text { supp }\left[a_{k}, a_{k+1}\right)}\right\}_{k=0}^{d-1},$ defined over the domain support on the corresponding partition $\forall 0 \leq k \leq d-1$.

\begin{equation}
\begin{tabular}{ |l|l| }
\hline
\multirow{2}{*}[0.5em]{$\overbrace{f_{n=4, \operatorname{supp}[a, b)}}^{\text {spline model }}=\sum\limits_{k=0}^{d-1} \Big[\overbrace{\sum_{n=0}^{4} q_{n, k} t^{n} \chi_{\left[a_{k}, a_{k+1}\right)}(t)}^{f_{k, n=4, \operatorname{supp} [a_{k}, a_{k+1})}}\Big] $} & $\underbrace{[a, b)}_{\text {support }}=\bigcup_{k=0}^{d} \underbrace{\overbrace{\left[a_{k}, a_{k+1}\right)}^{\text {partition interval}}}_{
\text { support }},$\scalebox{0.8}{%
${\displaystyle a \equiv a_{o}<\cdots<a_{d} \equiv b,}  $} \\
\cline{2-2}
 & $\overbrace{f_{k, n=4, \operatorname{supp}\left[a_{k}, a_{k+1}\right)}}^{\text {spline model component }}=\sum\limits_{n=0}^{4} q_{n, k} t^{n} \chi_{\left[a_{k}, a_{k+1}\right)}(t)   $ \\
 \hline
\end{tabular} 
\end{equation}

Then, the closed-form analytical expression of the Laplace transform of the (approximate) signal can be derived as


\begin{equation}
\scalebox{0.87}{$
\begin{aligned}
\mathcal{L}(\text { signal })(z) \cong & \mathcal{L}\left(f_{n=4, \operatorname{supp}[a, b)}\right)(z)
=\int_{0}^{\infty} f_{n=4, \operatorname{supp}[a, b)}(t) e^{-z t} d t =\sum_{k=0}^{d-1}\left[\sum_{n=0}^{4}\left(q_{n, k}
\left(A_{a_{k}, n}(z)-A_{a_{k+1}, n}(z)\right)\right)\right] \\
=&\underbrace{\sum_{k=0}^{d-1}}_{\text {spline }} \left [ \underbrace{\sum_{n=0}^{4}}_{ \text { polynomial }} \left ( \underbrace{q_{n, k}}_{\text { poly coef }} \underbrace{\int_{a_{k}}^{a_{k+1}} t^{n} e^{-z t} d t}_{\mathcal{L}\left(f_{n, \text { supp }\left[a_{k}, a_{k+1}\right)}\right)(\mathrm{z})} \right ) \right ]
=:F_{d,4}(z) ,
\end{aligned}
$
}
\end{equation}
where
\begin{equation}
A_{a_{k}, n}(z)-A_{a_{k+1}, n}(z)=e^{-a_{k} z} \sum_{l=0}^{n}\left[\left(\begin{array}{l}
n \\
l
\end{array}\right) a_{k}^{n-l} \frac{l !}{z^{l+1}}\right]-e^{-a_{k+1} z} \sum_{l=0}^{n}\left[\left(\begin{array}{l}
n \\
l
\end{array}\right) a_{k+1}^{n-l} \frac{l !}{z^{l+1}}\right] .
\end{equation}
Hence, the strategy of first spline-interpolating the discrete 
signal, followed by exact integration of these analytical functions, represents an alternative approach to numerically integrating observed signals even under a significant presence of noise. This method circumvents potential instabilities and extremely time-consuming numerical integration. To obtain the Laplace transform of the dataset, we effectively sum up the integrals of the Laplace transformed power functions, across
  
polynomial powers {$0,1,2,\cdots ,n$ } and across the spline partition of the signal time-domain support.

By estimating an upper bound on the error magnitude, we can contrast the analytic definition of Laplace Transform, $F_a(z):= \mathcal{L}(f)(z)$, with its estimate, $F_{d,4}(z)$,
\begin{equation}
\begin{aligned}
\left |F_{d,4}(z)-F_a(z)\right |=& \left |\int_a^b (f_{n=4, \operatorname{supp}[a, b)}(t)-f(t))e^{-z t} d t\right |
\le\int_{a}^{b} \sup_{a\le t<b}|f_{4, \operatorname{supp}[a, b)}(t)-f(t)|\cdot |e^{-z t}| d t \\
=&\sup_{a\le t<b}|f_{4, \operatorname{supp}[a, b)}(t)-f(t)|\times \frac{e^{-Re(z)a}-e^{-Re(z)b}}{Re(z)}\ .
\end{aligned}
\end{equation}
Our discrete LT method is compatible with the analytic one when the difference between the spline function and original function is minimized. In practice, we often work with functions 
or signals defined over the same time interval, e.g., fix $a=0, b=2\pi$. Then, the accuracy of this algorithm depends only on the fit of the chosen spline approximation function. On the other hand, if we consider naturally discrete datasets, the discrete LT still provides a valid definition on its own.

For a time-varying function, $f(t):\mathbb{R}^{+} \rightarrow \mathbb{C}$, there is also a direct connection between the Laplace ($\mathcal{L}$) and the Fourier ($\mathcal{F}$) transforms, where $H(t):\mathbb{R}^{+} \rightarrow \mathbb{C}$ is the Heaviside function:
\begin{equation}
\scalebox{0.9}{$\mathcal{L}(f(\cdot) H(\cdot))(z) \equiv \int_{0}^{\infty} f(t) H(t) e^{-z t} d t \underbrace{=}_{z=i \omega \atop-i z=\omega} \int_{0}^{\infty} f(t) H(t) e^{-i \omega t} d t =\mathcal{F}(f(\cdot) H(\cdot))(\omega)\equiv \mathcal{F}(f(\cdot) H(\cdot))(-i z).$}
\end{equation}
Suppose $w \in \mathbb{C}$ and we consider the Laplace transform of the exponential decay function
\begin{equation}
f(t)=e^{-w t} H(t), t \geq 0 , 
\mathcal{L}(f)(z)=\mathcal{L}\left(e^{-w t} H(t)\right)(z)=\frac{1}{z+w}\ .
\end{equation}
The Laplace transform of the shifted function $f(t)=e^{-w(t+a)}$ will be
\begin{equation}
\mathcal{L}(f)(z)=\mathcal{L}\left(e^{-w(t+a)} H(t)\right)(z)=e^{-w a} \mathcal{L}\left(e^{-w t} H(t)\right)(z)=\frac{e^{-w a}}{z+w} .
\end{equation}
For discrete data, alternative Laplace transform approximations may be obtained using other base functions. For instance, in the Fourier trigonometric basis, the Laplace transforms of the windowed trigonometric functions may be obtained as follows
\begin{equation}
\mathcal{L}(\sin (w t) H(t))(z)=\frac{w}{z^{2}+w^{2}},\quad \mathcal{L}(\cos (w t) H(t))(z)=\frac{z}{z^{2}+w^{2}} .
\end{equation}

Table 4.1 contains examples of some common basis functions involved in solving differential equations and their corresponding Laplace transforms.
\begin{center}
\begin{table}[H]
\center
\makegapedcells
    \begin{tabular}{||c|c||c||}
    \hline
    Basis Function & Transformation formula&Orthogonality Condition\\
    \hline
    \hline
    \textit{Sine Basis} &  $\mathcal{L}(\sin (w t) H(t))(z)=\frac{w}{z^{2}+w^{2}}$&$\int_{0}^{2\pi}\sin(nt)\sin(mt)dt=\pi\delta_{m,n}$\\
    \hline
    \textit{Cosine Basis} & $\mathcal{L}(\cos (w t) H(t))(z)=\frac{z}{z^{2}+w^{2}}$&$\int_{0}^{2\pi}\cos(nt)\cos(mt)dt=\pi\delta_{m,n}$ \\
    \hline
    \textit{Bessel Basis} & $\mathcal{L}(J_n(at) H(t))(s)=\frac{(\sqrt{s^2+a^2}-s)^n}{a^n\sqrt{a^2+s^2}}$ &$\int_{0}^{1}tJ_{n}(at)J_{n}(bt)dt=\frac{1}{2}J_{n}'(a)^2\delta_{a,b}$ \\
    \hline
    \textit{Laguerre  Basis}\cite{laplacetr} & ${\scriptstyle\mathcal{L}(L_{n}^{(\alpha)}(t) H(t))(z)=\frac{\Gamma(1+\alpha+n)}{\Gamma(1+\alpha)n!s}M(-n,1+\alpha;\frac{1}{s})}$ &${\scriptstyle\int_{0}^{\infty}t^{\alpha}e^{-t}L_{n}^{(\alpha)}(t)L_{m}^{(\alpha)}(t)dt= \frac{\Gamma(1+\alpha+n)\delta_{m,n}}{n!}}$ \\
    \hline
    \textit{Legendre Basis}\cite{legendre}&\begin{math}\begin{aligned}[t]
    &{\scriptstyle\mathcal{L}(P_n(t)H(t))(s)=\frac{1}{2}\sqrt{\pi}\Bigg(\sqrt{\frac{2}{s}}I_{-n-1/2}(s)+ }\\&{\scriptstyle(-1/2s)^{\lfloor\frac{|n|+2}{2}\rfloor-\lceil\frac{|n|}{2}\rceil} 
   {}_1F_2(1;\frac{1}{2}n+2-\frac{1}{2}\lceil\frac{|n|}{2}\rceil),}\\&{\scriptstyle1+\frac{1}{2}(\lceil\frac{|n+1|}{2}\rceil-\lfloor\frac{|n+1|+2}{2}\rfloor)-\frac{1}{2}n;\frac{1}{4}s^2)}\Bigg)\\
    \end{aligned} 
    \end{math}&$\int_{-1}^{1}P_{n}(t)P_{m}(t)dt=\frac{2}{2n+1}\delta_{m,n}$\\
    \hline
    \textit{Hermite Basis} &${\scriptstyle\mathcal{L}(H_{n}(t) H(t))(z)=2^n\frac{\Gamma(1+n)}{s^{1+n}}{}_{2}F_2(-\frac{n}{2},\frac{1-n}{2};\frac{-n}{2},\frac{1-n}{2};\frac{s^2}{4})}$&
$\int_{-\infty}^{\infty}H_m(x)H_n(x)e^{-x}dx=\sqrt{\pi}2^nn!\delta{m,n}$
    \\
    \hline
    \textit{Chebyshev Basis}\cite{piessens2000computing}&
    ${\scriptstyle\mathcal{L}(H_{n}(t) x^{\alpha}T_n(1-2x))(z)=\frac{\Gamma(1+\alpha)}{z^{1+\alpha}}{\scriptstyle{}_{3}F_1(-n,n,1+\alpha;1/2;1/z)}}$& $\int_{-1}^{1}\frac{T_{n}(x)T_m(x)}{\sqrt{1-x^2}}dx=\frac{1}{2}^{\delta{n,0}}\pi\delta_{n,m}$\\
    \hline
\end{tabular}
\caption{Examples of analytical Laplace transforms of some common base functions. The function $M(\cdot)$ is the confluent hypergeometric function and $I_{n}(\cdot)$ is the modified Bessel function of the first kind, or order $n$. We provide selected sample calculations using Meijer-G functions in Appendix \ref{laplace_calmeijerg} }
\end{table}

\end{center}

Since the LT is a linear operator, any function or signal that can be expressed in the trigonometric basis, or more generally any orthonormal basis, would have a closed form analytical Laplace transform representation. For instance, this study \cite{ricci2021tricomi} explicates the Laplace transform using of the Tricomi's method based on Laguerre polynomials.

\subsection{(Analytic) Inverse Laplace Transform of Entire Functions}

To validate the ILT approximation, we use an approximation formula \cite{valsa1998approximate} proposed by Valsa and Brancik for the inverse Laplace transform (ILT). Their method provides satisfactory and adjustable accuracy with impressive computational efficiency. The method tries to recover the (time) signal $f=ILT(F)$, starting with the Bromwich integral

\begin{equation}
f(t)=\mathcal{L}^{-1}\{F(s)\}=\frac{1}{2 \pi i} \int_{\gamma-i \infty}^{\gamma+i \infty} F(s) \mathrm{e}^{s t} \mathrm{~d} s\ .  
\end{equation}

The basic assumptions on the function $F(z)$, which implicitly represents a Laplace transform of some original function or signal, include: (1) $F(z)$ is univariate analytic (regular) function over $\operatorname{Re}\{z\}>0$; (2) $\lim_{|z|\to \infty} F(z)=0$; and (3) $F^{*}(z)=F\left(z^{*}\right)$, where the asterisk denotes complex conjugation.

Valsa and Brancik's ILT estimate is based on approximation of $e^{zt}$:
\begin{equation}
\mathrm{e}^{z t} \sim E_{\mathrm{c}}(z t, a):=\frac{\mathrm{e}^{a}}{2 \cosh (a- z t)}=\frac{\mathrm{e}^{a}}{\mathrm{e}^{a} \mathrm{e}^{-z t}+\mathrm{e}^{-a} \mathrm{e}^{z t}}=\frac{\mathrm{e}^{z t}}{1+\mathrm{e}^{-2 a} \mathrm{e}^{2 z t}},
\end{equation}

where the subscript "c" refers to $\cosh$ and $a >\gamma t$ is a parameter chosen later to ensure that  $\left|\mathrm{e}^{-2 a} \mathrm{e}^{2 s t}\right| \ll 1$.

Then, we can express the ILT reconstruction as
\begin{equation}
\begin{aligned}
f_{\mathrm{c}}(t, a)&=\frac{1}{2 \pi i} \int_{\gamma-i \infty}^{\gamma+i \infty} F(z) E_{\mathrm{c}}(z t, a) \mathrm{d} z \\
&=\frac{1}{2 \pi i} \int_{\gamma-i\infty}^{\gamma+i \infty} F(z) \mathrm{e}^{z t} \mathrm{~d} z +\frac{1}{2 \pi i} \int_{\gamma-i \infty}^{\gamma+i \infty} F(z) \sum_{n=1}^{\infty}(-1)^{n} \mathrm{e}^{-2 n a} \mathrm{e}^{(2 n+1) z t} \mathrm{~d} z \\
&=f(t)+\sum_{n=1}^{\infty}(-1)^{n} \mathrm{e}^{-2 n a} f[(2 n+1) t]
=f(t)+\epsilon_{\mathrm{c}}(t, a) .
\end{aligned}
\end{equation}

Note the function $E_{\mathrm{c}}(z t, a)$ can be expressed as an infinite sum of rational functions of $z t$ using the series expansion for $\frac{1}{\cosh z}$, which is a meomorphic function (See figure \ref{fig:cosh}) with simple pole $i\frac{\pi}{2}+\pi i \mathbb{Z}$ (See Supplementary material for derivation and evaluation)
\begin{equation}
\frac{1}{\cosh z}=2 \pi \sum_{n=0}^{\infty} \frac{(-1)^{n}(n+1 / 2)}{(n+1 / 2)^{2} \pi^{2}+z^{2}} \Longrightarrow E_{\mathrm{c}}(z t, a)=\pi \mathrm{e}^{a} \sum_{n=0}^{\infty} \frac{(-1)^{n}(n+1 / 2)}{(n+1 / 2)^{2} \pi^{2}+(a-z t)^{2}}.
\label{cosh:eq}
\end{equation}

Therefore,
\begin{equation}
\begin{aligned}
f_{\mathrm{c}}(t, a)&=\frac{\mathrm{e}^{a}}{2 i} \int_{\gamma-i \infty}^{\gamma+i \infty} F(z) \sum_{n=0}^{\infty} \frac{(-1)^{n}(n+1 / 2)}{(n+1 / 2)^{2} \pi^{2}+(a-z t)^{2}} \mathrm{~d} z\\
&=\frac{\mathrm{e}^{a}}{2 i} \sum_{n=0}^{\infty}(-1)^{n}(n+1 / 2) \int_{\gamma-i \infty}^{\gamma+i \infty} \frac{F(z)}{(n+1 / 2)^{2} \pi^{2}+(a-z t)^{2}} \mathrm{~d} z =\frac{\mathrm{e}^{a}}{2 i} \sum_{n=0}^{\infty}(-1)^{n}(n+1 / 2)I_{n} .
\end{aligned}    
\end{equation}

The path integral $I_{n}$ can be considered as part of a large semicircle on the right half plane and can be evaluated through its residue at two simple poles $z_{1,2}=\frac{a \pm i(n+1 / 2) \pi}{t}$. Since $a\in\mathbb{R}$, then $z_1 = z_2^*$.
We assume that the function $F$ will annihilate the path integral, i.e.,

\begin{equation}
\begin{aligned}
\text{res}_{-1,z_1}g &= \lim_{z\to z_1} (z-z_1)\frac{F(z)}{(n+1 / 2)^{2} \pi^{2}+(a-z t)^{2}}\underbrace{=}_{\text{l'H\^{o}spital}}-\frac{F(z_1)}{2 \pi i t(n+1 / 2)} ,\\
I_{n} &=2\pi i(\text{res}_{-1,z_1}g+\text{res}_{-1,z_2}g)=-2 \pi i\left[\frac{F\left(z_{1}\right)}{2 \pi i t(n+1 / 2)}-\frac{F\left(z_{2}\right)}{2 \pi i t(n+1 / 2)}\right]\\
&=\frac{F\left(z_{1}{ }^{*}\right)-F\left(z_{1}\right)}{t(n+1 / 2)}=\frac{F^{*}\left(z_{1}\right)-F\left(z_{1}\right)}{t(n+1 / 2)} =\frac{-2 i \operatorname{Im}\{F([a+i(n+1 / 2) \pi] / t)\}}{t(n+1 / 2)} .
\end{aligned}  
\end{equation}

Hence, we get
\begin{equation}
f_{\mathrm{c}}(t, a)=-\frac{\mathrm{e}^{a}}{t} \sum_{n=0}^{\infty}(-1)^{n} \operatorname{Im}\left\{F\left[\frac{a}{t}+i\left(n+\frac{1}{2}\right) \frac{\pi}{t}\right]\right\},
\end{equation}

which represents a good approximation of the original function $f$ we aimed to recover.

Valsa and Brancik \cite{valsa1998approximate} use the so-called {\it{Euler transformation}} to accelerate the convergence of the alternating sum above. The original infinite series and the Euler transformation are given by \cite{ooura2001continuous}
\begin{equation}
\begin{aligned}
S&=y_{1}-y_{2}+y_{3}-y_{4}\cdots = \sum_{i=1}^{\infty}{(-1)^{i+1}y_{i}} = \sum_{t=1}^{\infty}{e^{i\pi t}y_{t}} \\
S_{\text{Euler}} &= \frac{1}{2}\sum_{n=1}^{\infty} \left (-\frac{1}{2}\Delta \right )^{n-1} y_1, \quad \text{where }\Delta y_n\equiv y_{n+1}-y_n .
\end{aligned}
\end{equation}

As shown in the appendix, $S_{\text{Euler}}=S$ because $\sum_{n=i}^{\infty}\frac{ \binom{n}{i}}{2^n}=2$ is replaced by a finite sum truncated at $N$
\begin{equation}
\begin{aligned}
S_{\text{Euler}}^{(N)} &= \frac{1}{2}\sum_{n=1}^N \left (-\frac{1}{2}\Delta \right )^{n-1} y_1  = \sum_{j=1}^N\overbrace{\sum_{m=j+1}^{N+1}\frac{1}{2^{N+1}}\binom{N+1}{m}}^{w_j^{(N)}}(-1)^{j-1}a_j\\
&=
2^{-N}\left(E_{1} y_{1}-E_{2} y_{2}+\cdots+(-1)^{N-1} E_{N} y_{N}\right) ,   
\end{aligned}
\end{equation}

where truncation parameter $N$ is a parameter up to our choice and
\begin{equation}
E_{n_{\mathrm{dif}}}=1, E_{k}=E_{k+1}+\left(\begin{array}{c}
N+1 \\
k
\end{array}\right), N,N-1, \cdots, 1 .
\end{equation}

Thus, the Euler transformation means only that $N$ additional terms are added with gradually decreasing weights. Otherwise, their evaluation remains the same as that of the terms of the basic sum. 

\subsection{(Discrete) Inverse Laplace Transform}

For nice analytic functions, there are some efficient algorithms to compute the inverse Laplace Transform. However, these approaches heavily rely on the complex analytic properties of the input functions, which in general can be subtle and delicate. In practice, we often encounter situations where the data represents a limited number of sample points of an unknown complex function. Thus, the kime-surface, defined as a complex-valued function over the complex time domain, is only observed with limited accuracy over a limited region in $\mathbb{C}$. This necessitates the development of new algorithms for estimating the corresponding time-domain functions representing the inverse Laplace transforms of general discrete-lattice sampled kime-surfaces.

\subsubsection{Discrete  ILT Quantization}
We set our function space as $L^2([0,2\pi))$, and always use 2-norm, unless otherwise specified. Given a set of (sampled) data points $S = \{(z_i, b_i= F(z_i))\}_{i=1}^N$ on the complex plane $\mathbb{C}$, we want to recover a temporal signal
\begin{equation}
f = \mathcal{L}^{-1}F    
\end{equation}
from observed kime-surface. Assume the sample data is indeed produced by some unknown $f$ that has somewhat nice time regularity, e.g., $f$ is differentiable with a bounded derivative. If $f$ is a piecewise constant function subject to the partition $\mathbf{p}=\{p_0,p_1,\cdots,p_n\}$, where $0 = p_0<p_1<\cdots <p_n = 2\pi$, then $f$ can be considered as a vector 
\begin{equation}
\mathbf{u}^p = (u_1,u_2,\cdots ,u_n),  
\end{equation}
where $\mathbf{u}^p$ indicates the vector representing the value of $f$ on a partition $p$ and $f^p(x) = u_i$,  $\forall x\in(p_{i-1},p_i)$.

\textbf{Remark}: The goal is to find $f$, not $\mathbf{u}^p$. In fact, the partition $p$ is not really given by the problem. We are also free to choose other kinds of finite representations, as may be appropriate. The specific partition is not part of the problem. There are some subtle considerations with this generic quantization approach. Conventionally, we may use finite differencing or finite element methods to set up a minimizing problem. However, then we have no way of controlling the (potentially problematic) optimizer. This may lead to unstable solutions then the matrix involved in the solution is (close to being) non-invertible. However, this quantization scheme targets quantifying and analyzing the approximation error.

\subsubsection{Pseudo algorithm}
\label{randilt}
Given a sampled kime-surface (data), $S = \{(z_i, b_i= F(z_i))\}_{i=1}^N$, over a domain where $z_i$ has bounded (non-negative) real part, we can recover the function $f$ using the pseudo algorithm shown in {\bf{Algorithm 1}} box. We utilized randomization and resampling (as opposed to deterministic selection) along the kime surface to enhance numerical and computational stability.
\begin{algorithm}[H]
\caption{Randomized ILT}\label{alg:cap}
\begin{algorithmic}[1]
\State $N_1,N_2 \gets prior\_estimate$ is the partition size
\State $itn \gets g(N_1, N_2, prior\_estimate)$ is the number of attempts ({\it{iterations}})
\For{$1\le k \le itn$} 
    \State $\mathbf{p}^k \gets$ Random $N_1$ size partition on interval $(0,2\pi)$, according to distribution $P$.
    \State $\mathbf{b} = (b_i)\gets $ Random $N_2$ points $(z_i, b_i)$ from the dataset $S$
    \State $\mathbf{A} \gets (a_{ij} = \frac{1}{-z_i}(\exp(-z_ip_j)-\exp(-z_ip_{j-1}))$, note that $\mathbf{A}$ is the matrix computing LT of a quantized piecewise constant function
    \State $\mathbf{u}^{pk} = \mathbf{A}^{\dagger}\mathbf{b}$
    \State $f^{pk}\gets \mathbf{u}^{pk}$ as a piecewise constant function
\EndFor
\State $f \gets \frac{1}{itn}\sum_k f^{pk}$
\end{algorithmic}
\end{algorithm}
\textbf{Remark}: The parameters $N_1, N_2$ and $itn$ can be determined by some prior estimates, although the algorithm itself may serve as a prior estimate when $N_1, N_2$ can be defaulted to something trivial like $\sqrt{N}$, and $itn$ may take a similar magnitude. We will clarify this hyperparameter estimation protocol in the validation section. The intuition between the randomized approach is to utilize randomness to (at least try to) gauge the problem of smallest singular value and the fact that sampling on a curve or densely in a neighborhood is sufficient to reconstruct a complex analytic function.

\section{Experiments and Validation}

Next we validate the performance of the proposed LT and ILT approximations using experimental data. The specific experiments are chosen to demonstrate practical evidence, provide intuition behind the proposed discrete ILT quantization approach, and to identify some potential algorithmic limitations. All reported results are obtained by an R implementation of the LT/ILT algorithm, which is available in the TCIU R-package and in the project GitHub repository ({\url{https://github.com/SOCR/TCIU}}) and on CRAN ({\url{https://cran.r-project.org/web/packages/TCIU/}}).

\subsection{Validation of LT}
We will apply the proposed discrete LT algorithm to several discrete signals represented as sets of data points on the bounded interval $[0,2\pi]$ (time or space). To accurately estimate the performance and quantify error rates, the observed data samples are derived from functions with known analytic LTs, which allows us to compare the discrete estimates with their analytical counterparts. We will also invert the function back using the continuous ILT  \cite{valsa1998approximate} and compare with the original data points.


In Figure \ref{fig:km01}, we apply the Laplace transform to the {\it{sine}} function supported on $(0,2\pi)$, and the plot is over the range $(0,20)+(0,20)*1i$, where the absolute value of the transformed signal is the surface height and the color represents the phase parameter of the complex value. In Figure \ref{fig:lt01}, we compare the analytic result with our numerical implementation when the {\it{sine}} function is sampled at regular grid points separated by $2\pi/200$. Note $\mathcal{L}(\sin\cdot\chi_{[0,2\pi]})(z)=\int_{0}^{2\pi} {\sin(t)e^{-zt}dt} =\frac{1}{2i} \left (\frac{e^{2\pi(i-z)}}{i-z}+\frac{e^{2\pi(-i-z)}}{i+z}+\frac{2i}{z^2+1}\right )=
\frac{1-e^{-2\pi z}}{z^2+1}$, 
and we plot that function (red color) on the line $z = 0.1+ (0,20)*1i$. The blue points on the graph represent the result of the proposed numerical LT approach. The absolute values of the real and estimated LT are plotted on a logarithm scale to facilitate easy comparison. The results suggest a very close visual match between the the blue points (estimated) and red line (true) values. In our experiments, the accuracy is on the order $10^{-10}$, and even higher accuracy can be achieved with using denser point sampling, which is natural as the forward Laplace transform is a linear integral transform.

\begin{figure}[H]
  \centering
  \includegraphics[width=0.54\textwidth]{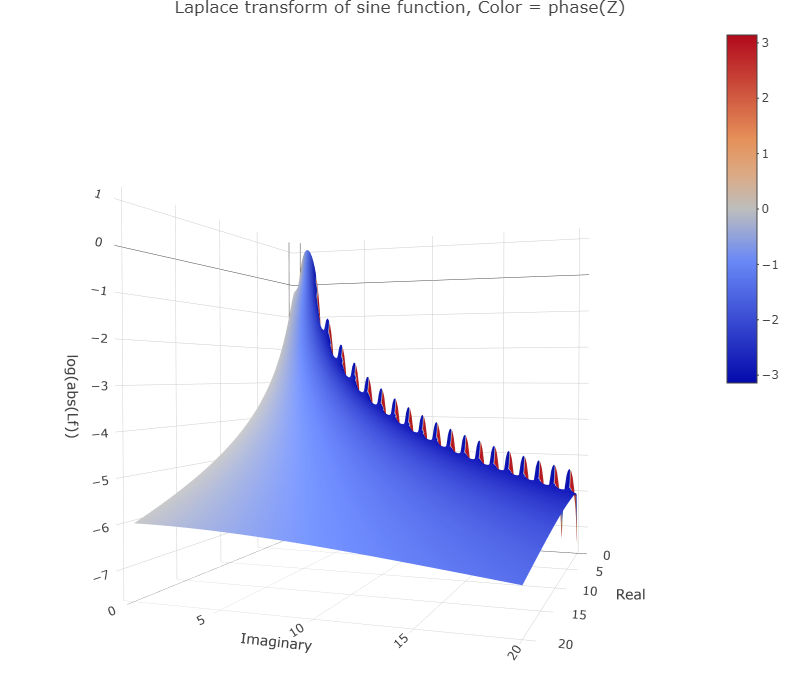}
  \includegraphics[width=0.45\textwidth]{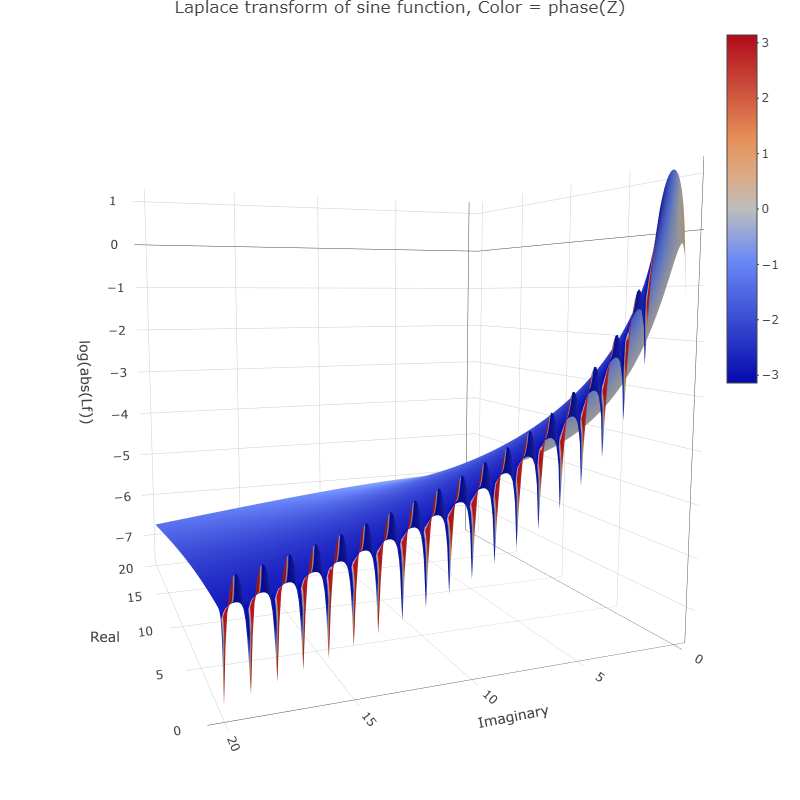}
  \caption{Laplace Transform of the {\it{sine function}}, vertical-axis (amplitudes of the functional values) are on a logarithm scale.}
  \label{fig:km01}
\end{figure}

\begin{figure}[H]
  \centering
  \includegraphics[width=0.7\textwidth]{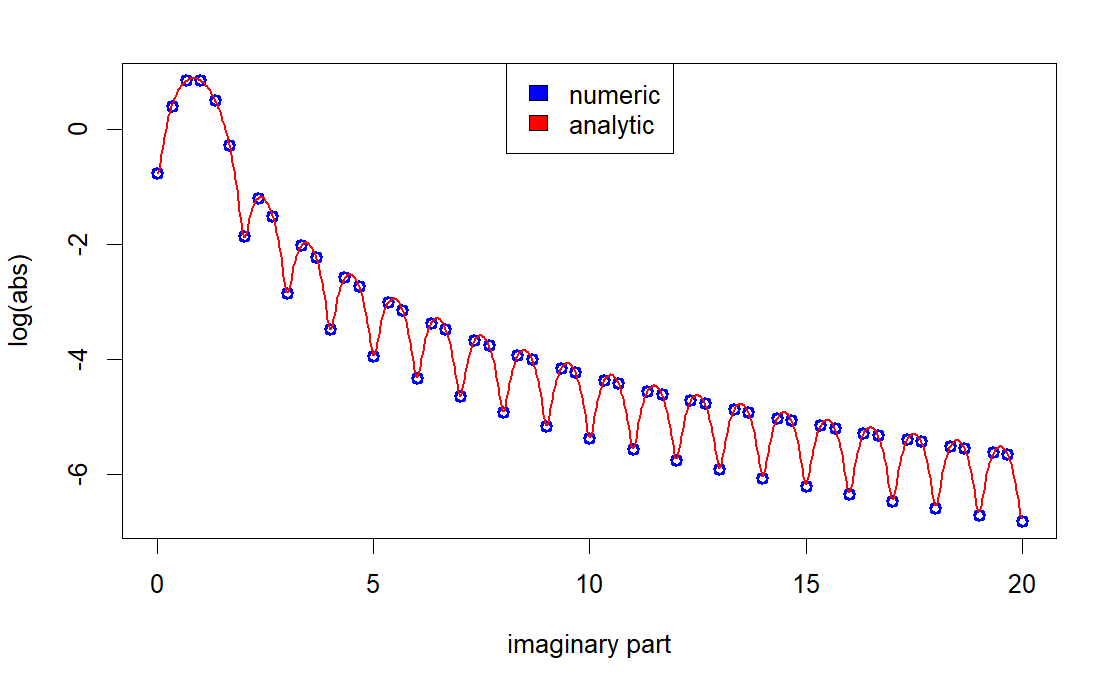}
  \caption{Compare the magnitudes of the numeric LT approximation (blue) with the corresponding analytic expression (red).}
  \label{fig:lt01}
\end{figure}

In Figure \ref{fig:km02} and \ref{fig:ailt}, we first apply the numeric LT to the function 
\begin{equation}
f(x) = 2\sin(x)+\cos(4x)+\sin(7x+0.5)+0.3(x-3)(x-5)+\epsilon(x).   
\end{equation}

The function $\epsilon$ represents random noise on the interval $[0,2\pi)$. The function $f$ is plotted as a red curve in Figure \ref{fig:ailt}, and its kime surface is shown in Figure \ref{fig:km02}.  The analytical ILT recovers the original function in the time domain (blue points). Clearly, the blue points closely match the red curve on the $[0,2\pi)$ interval, and naturally fall back to zero outside that range, as expected. In practice, computing the analytic ILT requires evaluation of the LT function at very high accuracy, and this figure confirms that both the analytic and numerical ILT methods work well.

\begin{figure}[H]
  \centering
  \includegraphics[width=0.6\textwidth]{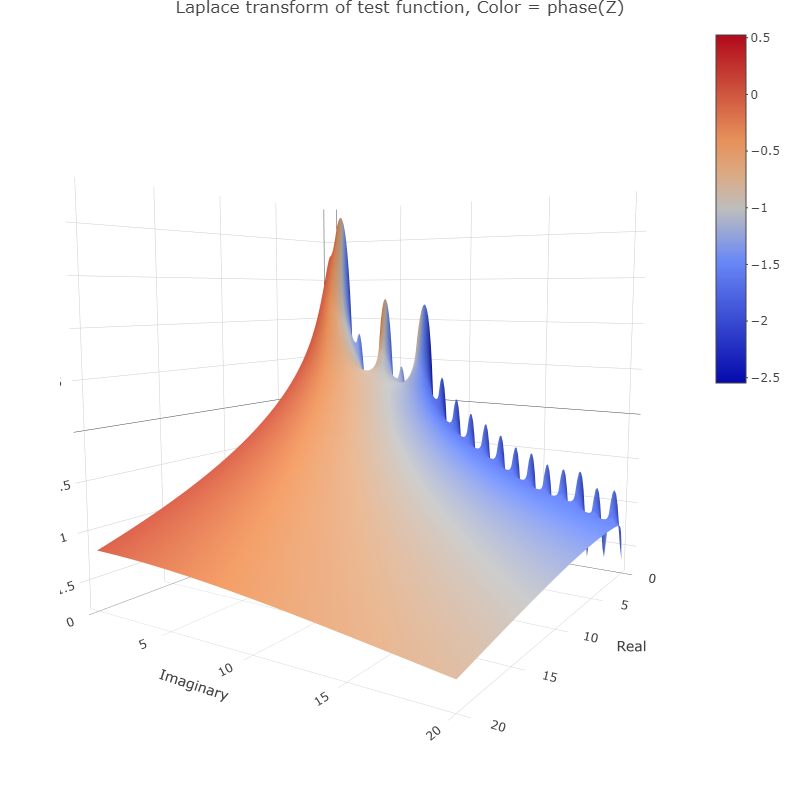}
  \caption{Laplace Transform of the test function $f$, logarithm scale}
  \label{fig:km02}
\end{figure}

\begin{figure}[H]
  \centering
  \includegraphics[width=0.8\textwidth]{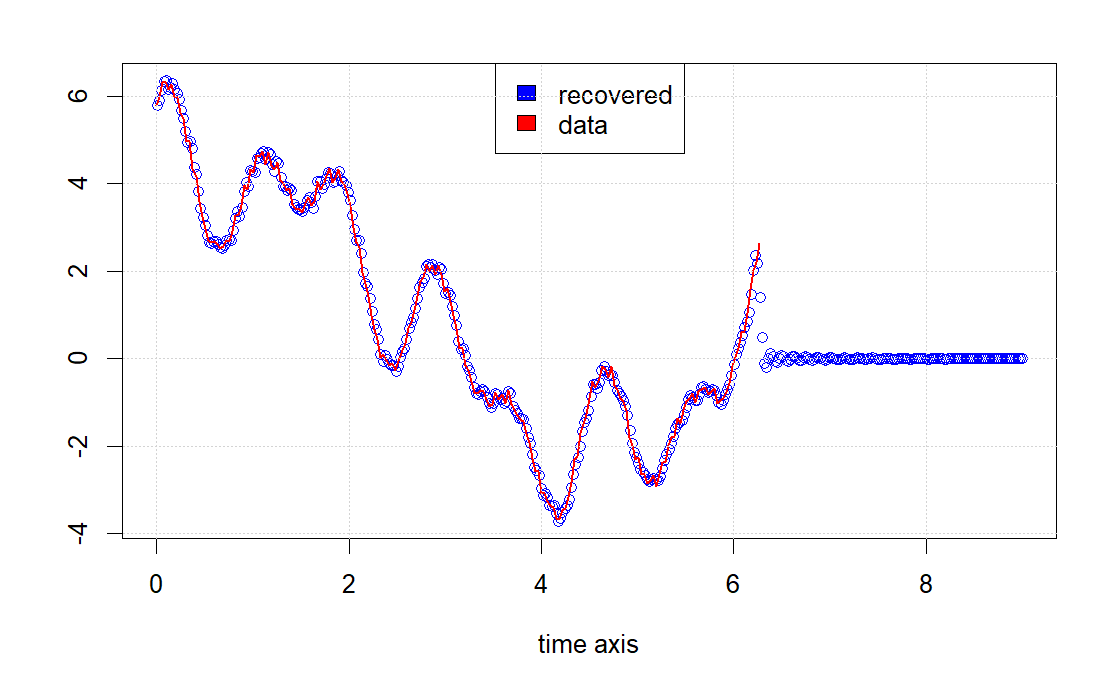}
  \caption{Plot of the time-domain function reconstruction using the analytic ILT method, $\hat{f}=\mathcal{L}^{-1}{\mathcal{L}(f)}$ (blue points), based on data points generated by the proposed LT algorithm compared to the original function, $f$, (red curve).}
  \label{fig:ailt}
\end{figure}

Visually, the numeric LT performs very well when the signal is sufficiently densely sampled. This is expected as in such cases, integration is often easy to implement and it adds another degree of smoothness. Since we compute the analytic integration result in each sub-interval, the domain point sampling strategy and the choice of a functional basis are critical. These choices may need to consider the signal frequency spectrum and choose appropriate domain partitioning scheme, function basis, and degrees of freedom.

In terms of computational complexity, when using a {\it{polynomial basis}}, we observe that the number of operations will increase rapidly as a function of the degrees of freedom, $n!$. This is because (analytic) integration of $t^ne^{-zt}$ needs to be decomposed into lower order terms. On the other hand, when using a {\it{trigonometric basis}}, the number of computational operations is less sensitive to the degree of freedom in each domain partitioning sub-interval, since in this case, integrating $\sin(nt)e^{-zt}$ is straightforward. 

\subsection{Validation of the discrete ILT}
In this section, we will compare the inverse Laplace transform estimates against their exact counterparts. We will apply the proposed discrete ILT method to data obtained by sampling from complex functions with known analytic ILT and also complex functions obtained above through LTs of specific longitudinal functions. 

We first apply the discrete ILT to the LT of $f(x)=\sin(3x)$ over the interval $(0,2\pi)$. In Figure \ref{fig:recs2}, the recovered and re-interpolated value of all $f^p$ (random domain partitioning scheme $p$) on each grid point is displayed as a box plot. We see that the inverse LT estimates tend to vary wildly. Mind the width of each vertical box, especially towards the end. This right side instability is due to the impact of the negative exponential in Laplace transform. 
However, the randomized algorithm estimates are much tighter on the left. The recovered estimate (green line) represents a {\it{weighted mean}} of all recovered function attempts, whereas the blue line estimate correspond to the {\it{median}} of the multiple recoveries at each grid point. 

Both ILT estimates (green and blue curves)  closely follow the true analytical ILT function (red curve). In particular, Figure \ref{fig:recs2} provides an example demonstrating the adaptability of the ensembled (mean or median based) randomized algorithm to recover the original data from a large number of {\it{imperfect}} individual attempts. 


\begin{figure}[H]
  \centering
  \includegraphics[width=0.9\textwidth]{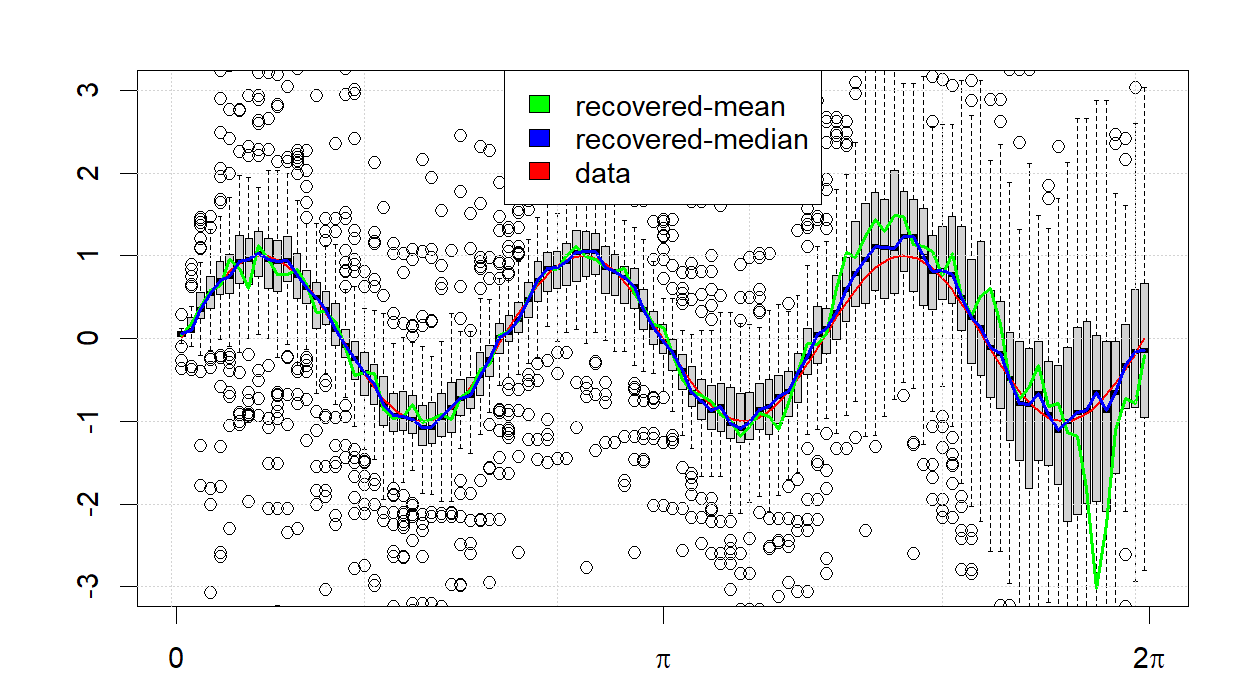}
  \caption{Box plot of recovered (interpolated) $f^p$ versus original $f$ (red).}
  \label{fig:recs2}
\end{figure}

Figures \ref{fig:rec} and \ref{fig:rec2} show the same experiment using the function $f(x) = 2\sin(x)+\cos(4x)+\sin(7x+0.5)+0.3(x-3)(x-5)+\epsilon(x)$ (see Figure \ref{fig:ailt}). Again, the results demonstrate that the discrete ILT approximation recovers the original function, see the resemblance between the reconstructions (green and blue lines) and the true signal (red curve). 

\begin{figure}[H]
  \centering
  \includegraphics[width=0.9\textwidth]{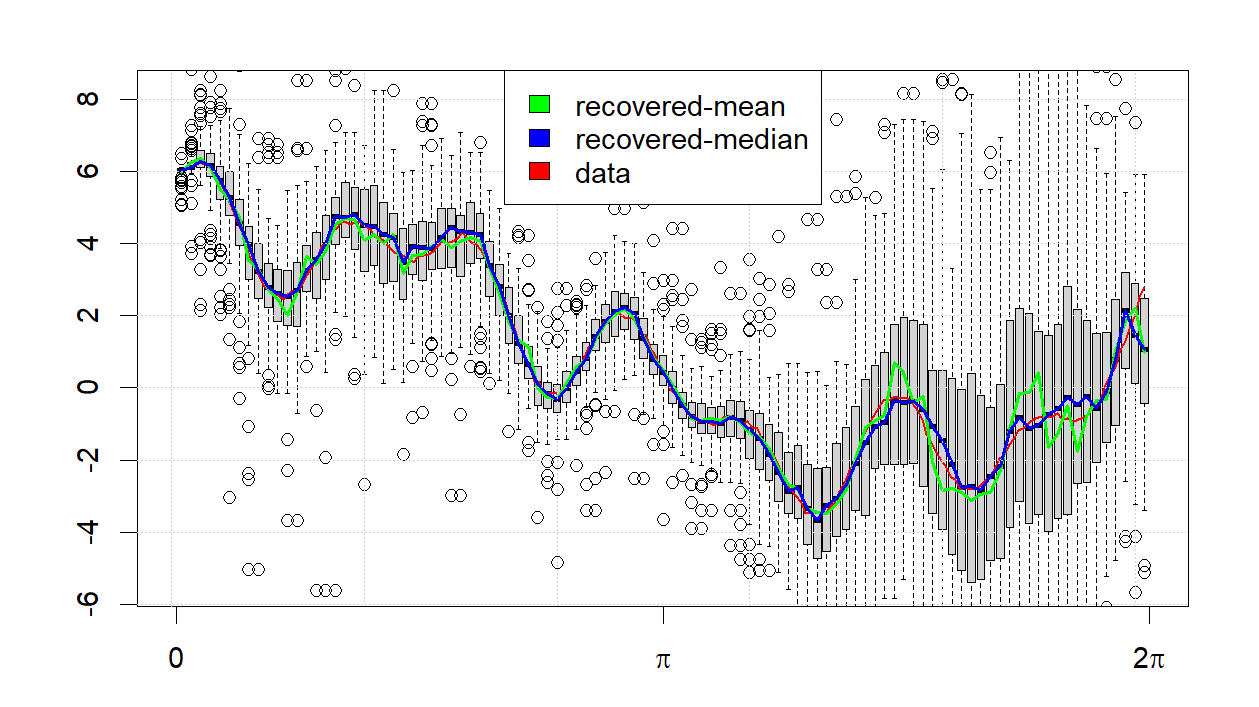}
  \caption{Box plot of recovered (interpolated) $f^p$ versus original $f$ (red). Horizontal and vertical axes represent the time and signal intensity, respectively.}
  \label{fig:rec}
\end{figure}

\begin{figure}[H]
  \centering
  \includegraphics[width=0.9\textwidth]{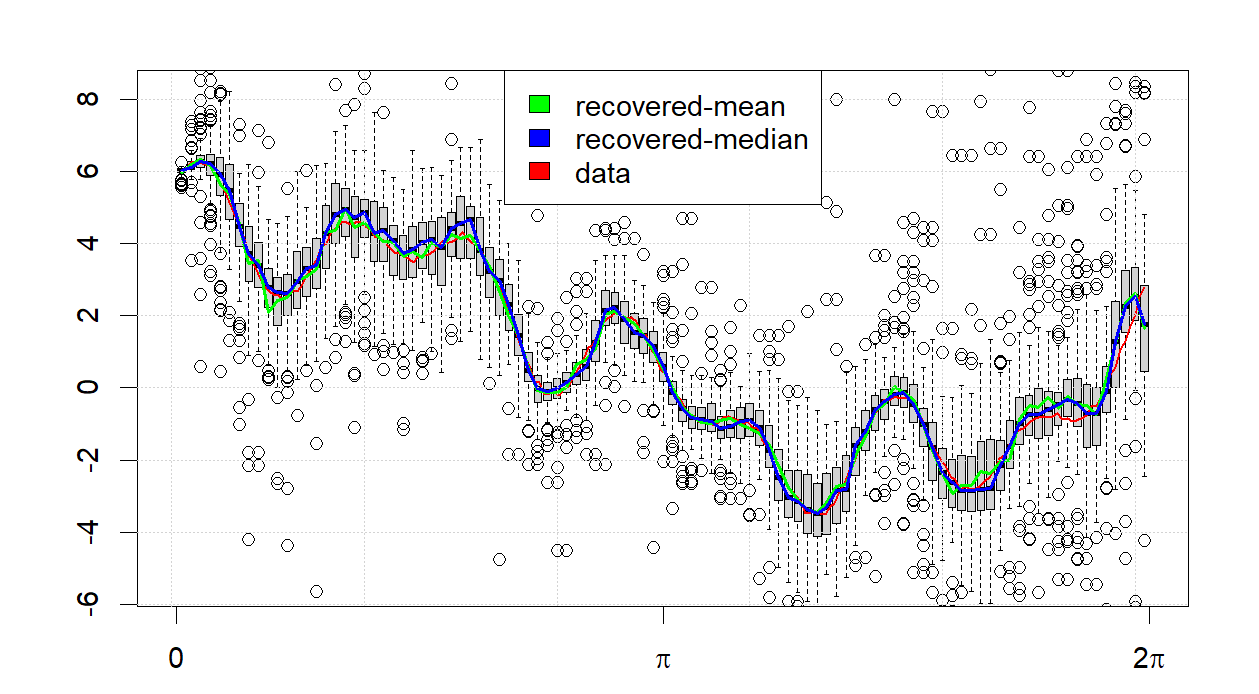}
  \caption{Another Box plot of recovered (interpolated) $f^p$ versus original $f$ (red).}
  \label{fig:rec2}
\end{figure}

Consider the original kime-surface shown in Figure \ref{fig:km02}. This surface is generated by applying LT to the result of the discrete ILT. In Figure \ref{fig:rec3}, we take a slice of the reconstructed kime-surface corresponding to $Re(z) = 0.5$ and compare it with the corresponding slice of the original signal. Since both signals are complex-valued, we plot their magnitudes. Even when the slice does not exactly correspond to the location of the sampling points, the recovered kime surface still preserves much of the underlying information stored in the original signal. Note that the discrepancy (error) rate mostly reflects differences between the locations of slicing points through the kime-surface. 


\begin{figure}[H]
    \centering
  \includegraphics[width=0.7\textwidth]{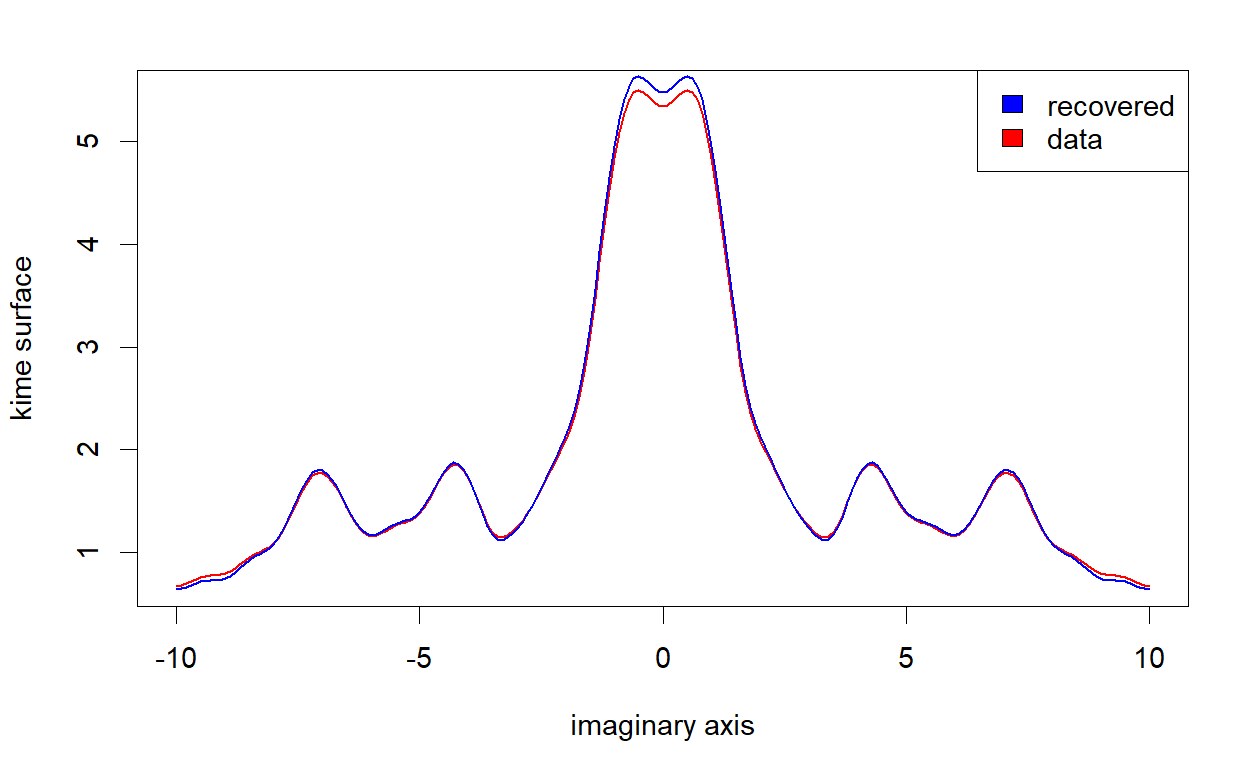}
  \caption{Magnitude of the reconstructed kime surface (at $Re(z) = 0.5$) compared to the original signal.}
  \label{fig:rec3}
\end{figure}

In Figure \ref{fig:recr}, we repeat $100$ times the experiment shown in Figure \ref{fig:rec} using different sets of sampling points. The graph shows the absolute and relative errors of the reconstructed kime surface (from recovered function, at the sample points) and recovered function. The two error rates are not strongly related, which confirms the intuition that large inverse LT errors are due to singularities of the linear system (of the discrete Laplace transform), rather than the numeric accuracy of solving the LT. 
Specifically, when matrix $\pmb{A}$ has SVD decomposition $A=\sum_k \sigma_k u_k v_k^{\dagger}$, the pseudo inverse is then $A^{\dagger}=\sum_k \frac{1}{\sigma_k} v_k u_k^{\dagger}$, and the smallest $\sigma_n(A)$ becomes the dominant singular values $\sigma_1(A^{\dagger})=\frac{1}{\sigma_n(A)}$.


\begin{figure}[H]
    \centering
  \includegraphics[width=0.7\textwidth]{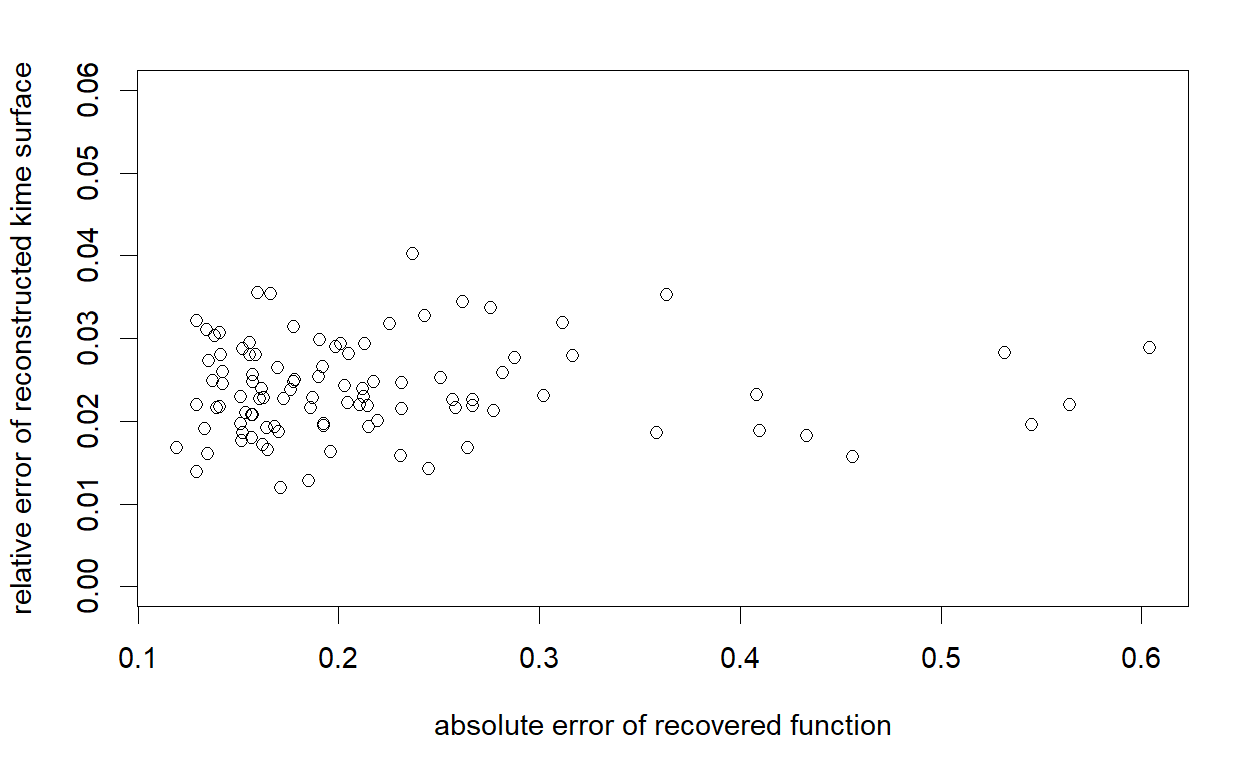}
  \caption{Error of recovered function and error of reconstructed kime surface using the recovered function.}
  \label{fig:recr}
\end{figure}

\subsection{Analysis of discrete ILT} 
In this section, we will provide an intuitive and empirical perspective on the efficacy of the randomized domain partitioning strategy which works well in practice, which will be supplemented with "approximate" scenarios that would work well in theory. Specifically, we show that the LT matrix derived from the preset model can be hard to invert; however, randomizing the domain partitioning significantly improves the process. We borrow ideas and estimation-strategies from random matrix theory. The preset model represents a partition function with fixed intervals whose partitioning values are to be determined. Indeed, there are other similar strategies that can be employed. 

The proposed discrete ILT technique is based on randomizing the partition, $p_i$, and the data points, $z_j$. Ideally we want to derive a model or explicate the joint probability distribution of the partition and point processes. However, theoretical error bounds for alternative strategies remain unknown. Hence, we present its weaker versions with relaxed constraints. 

\subsubsection{Theoretical Approximations and Results: Assumptions \& Analysis}


Our overarching goal is to bound the smallest singular values of rectangular, complex the randomized matrix with non-iid entries $A= \Big(\frac{1}{-z_i}(\exp(-z_ip_j)-\exp(-z_ip_{j-1}))\Big)_{1\leq i\leq n',1\leq j\leq n}$ via the randomized strategy (Algorithm \ref{alg:cap}) presented, which works pretty well in data experiments. Specifically, the two main challenges are (1) working with complex matrices (many real matrix results may be extended from real to complex \cite{vershynin2010introduction}); and (2) investigate the synergies between the newly proposed numerical methods and current state-of-the-art random matrix theory techniques, where the partition points $p_j$ are not necessary independent. This independence may yield non-trivial covariance matrices or violate isotropicity conditions \cite{vershynin2010introduction}.


\begin{definition}
    A random vector $X$ in $\mathbb{R}^n$ is isotropic if $\Sigma(X)=\mathbbm{E}[XX^T]=I_{n\times n} .$ Similarly, a random vector in $\mathbb{C}^n$ is isotropic if $\Sigma(X)=\mathbb{E}[XX^*]=I_{n\times n}$.
\end{definition}

\textbf{\underline{ Random Partitioning}}\\
Our surrogate random interval partitioning strategy is to first sample random variables $X_i$ from the same distribution $X$, $X_i\sim X$, and then perform normalization to fit a model on the $(0,2\pi)$ interval. The $i$th interval length is then $2\pi\frac{X_i}{\sum_{j=1}^{n+1} X_{j}}$, and the endpoints of the interval are $\left [2\pi\frac{\sum_{j=1}^{i-1}X_j}{\sum_{j=1}^{n+1} X_{j}},
2\pi\frac{\sum_{j=1}^iX_j}{\sum_{j=1}^{n+1} X_{j}} \right ]$ and $p_J=2\pi\frac{\sum_{j=1}^JX_j}{\sum_{j=1}^{n+1} X_{j}}$ ($1\leq J \leq n$). 

Note that the denominator sums cover  $n+1$ terms, as we need to throw out the last data point which deterministically produces $2\pi$. However, such partitions introduce correlations , which may impact the subsequent analyses using the random matrix results.

\begin{lemma}
    If $X_i >0$ and $X_i\overset{\text{i.i.d}}{\sim} X$, $U=\frac{\sum_{j=1}^{i}X_j}{\sum_{j=1}^{n+1} X_{j}}$ and $V=\frac{\sum_{j=1}^k X_j}{\sum_{j=1}^{n+1} X_{j}}$ can't be independent $\forall i<k .$
\label{non-indep}
\end{lemma}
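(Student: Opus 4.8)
The plan is to prove the statement by showing that $U$ and $V$ have nonzero covariance, which already rules out independence. I am implicitly assuming $X$ is non-degenerate: if $X$ were almost surely constant the partition would be deterministic, $U$ and $V$ would be constants, and the claim would fail, so non-degeneracy is a necessary hypothesis. Fix $1\le i<k\le n$, put $T=\sum_{j=1}^{n+1}X_j$ and $Y_j=X_j/T$, so that $U=\sum_{j=1}^{i}Y_j$, $V=\sum_{j=1}^{k}Y_j$, and $\sum_{j=1}^{n+1}Y_j=1$.

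First I would observe that $(Y_1,\dots,Y_{n+1})$ is an exchangeable random vector, since the $X_j$ are i.i.d.\ and the normalizer $T$ is a symmetric function of them. Hence there are constants $v=\operatorname{Var}(Y_1)$ and $c=\operatorname{Cov}(Y_1,Y_2)$ with $\operatorname{Var}(Y_a)=v$ for every $a$ and $\operatorname{Cov}(Y_a,Y_b)=c$ for every $a\neq b$. Taking variances in the identity $\sum_{j=1}^{n+1}Y_j=1$ gives $0=(n+1)v+(n+1)n\,c$, i.e.\ $c=-v/n$.

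Next I would check $v>0$. Since $Y_1\in(0,1)$ this variance is automatically finite, so no moment assumption on $X$ is needed. If it were zero, then $Y_1=\gamma$ a.s.\ for a constant $\gamma$, which rearranges to $(1-\gamma)X_1=\gamma\sum_{j=2}^{n+1}X_j$ a.s.; positivity of the $X_j$ excludes $\gamma=1$, and for $\gamma\neq1$ this equates two independent random variables, forcing both to be a.s.\ constant and contradicting non-degeneracy of $X$. So $v>0$ and $c=-v/n<0$. Assembling, and using $\{1,\dots,i\}\subseteq\{1,\dots,k\}$,
\[
\operatorname{Cov}(U,V)=\sum_{a=1}^{i}\sum_{b=1}^{k}\operatorname{Cov}(Y_a,Y_b)=i\,v+i(k-1)\,c=\frac{i(n+1-k)}{n}\,v>0,
\]
where $n+1-k\ge1$ because $k\le n$. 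Equivalently, one may note $\operatorname{Cov}(U,1-V)=i(n+1-k)\,c<0$, since $U$ and $1-V=\sum_{j=k+1}^{n+1}Y_j$ are sums over the disjoint index blocks $\{1,\dots,i\}$ and $\{k+1,\dots,n+1\}$. A strictly positive covariance is incompatible with independence, so $U$ and $V$ cannot be independent.

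The substantive idea, and the thing that keeps the argument short, is to avoid reasoning about independence directly, which is awkward because $U$ and $V$ share the random denominator $T$, and instead reduce everything to the single scalar inequality $\operatorname{Cov}(U,V)\neq0$; the exchangeability of $(Y_j)$ together with the simplex constraint $\sum_jY_j=1$ then does all the work. The only place a hypothesis is genuinely needed is in proving $\operatorname{Var}(Y_1)>0$, which is exactly where non-degeneracy of $X$ enters.
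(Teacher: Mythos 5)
Your proof is correct, and it takes a genuinely different route from the paper. The paper argues via supports: conditioned on $U=u$, the positivity of the $X_j$ forces $V>u$ almost surely, so the conditional density of $V$ vanishes on $(0,u)$; the paper then asserts that the unconditional density $f_V$ is strictly positive on $(0,u)$, which contradicts $f_{V\mid U}=f_V$. That argument is short but implicitly assumes both that $V$ admits a density and that this density is positive below any point in the support of $U$; neither is literally guaranteed by the stated hypotheses (e.g.\ for discrete $X$). Your route instead writes $Y_j = X_j/\sum_\ell X_\ell$, uses the exchangeability of $(Y_1,\dots,Y_{n+1})$ together with the simplex identity $\sum_j Y_j = 1$ to force $\operatorname{Cov}(Y_a,Y_b)=-\operatorname{Var}(Y_1)/n$ for $a\neq b$, and then computes $\operatorname{Cov}(U,V)=\tfrac{i(n+1-k)}{n}\operatorname{Var}(Y_1)>0$. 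This needs no density assumption, needs no moment assumption (since $Y_1\in(0,1)$), and correctly isolates the two hypotheses the lemma silently uses — non-degeneracy of $X$ and $k\le n$ — that the paper's statement omits. What the paper's support argument buys in exchange is that, when it applies, it shows failure of independence in a qualitatively stronger way (the joint support is not a product set), whereas a nonzero covariance is a weaker, purely second-moment obstruction; but for the purpose of this lemma the covariance computation is cleaner and strictly more general.
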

\begin{proof}
    We argue that $f_{V\mid U}(v\mid U=u)=f_{V}(v),\forall u$ leads to a contradiction. Let's fix the value for $u$, then $f_{V\mid U}(v\mid U=u)=0, \forall 0<v<u$, but $f_{V}(v)>0,\forall 0<v<u$. Hence the independence condition $f(V\mid U)=f(V)$ is violated.
 
\end{proof}
\begin{corollary}
    $U=\frac{X_j}{\sum_{j=1}^{n+1} X_{j}}$ and $V=\frac{X_k}{\sum_{j=1}^{n+1} X_{j}}$ $\forall j\neq k$ aren't independent.
\end{corollary}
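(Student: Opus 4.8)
The plan is to detect the dependence through a strictly negative covariance; this is robust and needs nothing about the distribution of $X$ beyond non-degeneracy (the same standing assumption that, in the form ``$X$ has a density'', powered the proof of Lemma~\ref{non-indep}). Write $S=\sum_{i=1}^{n+1}X_i$ and $U_i=X_i/S$, so that in the statement $U=U_j$ and $V=U_k$ for a fixed pair $j\ne k$. Since every $U_i$ lies in $(0,1)$, all mixed moments are automatically finite, and since $(X_1,\dots,X_{n+1})$ is exchangeable so is $(U_1,\dots,U_{n+1})$. First I would record the consequences of exchangeability together with the identity $\sum_{i=1}^{n+1}U_i\equiv 1$: taking expectations gives $\mathbb{E}[U_i]=\tfrac{1}{n+1}$ for every $i$; and setting $d:=\mathbb{E}[U_i^2]$ and $c:=\mathbb{E}[U_iU_{i'}]$ for $i\ne i'$ (both independent of the indices chosen, by exchangeability), squaring the identity and taking expectations yields the bookkeeping relation $(n+1)\,d+(n+1)n\,c=1$.

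The second step is to show $U_1$ is not almost surely constant, and this is the one genuinely load-bearing point. If $U_1\equiv\tfrac{1}{n+1}$ a.s., then $(n+1)X_1=S$ a.s., hence $nX_1=X_2+\cdots+X_{n+1}$ a.s.; by symmetry also $nX_2=X_1+X_3+\cdots+X_{n+1}$ a.s., and subtracting these gives $(n+1)X_1=(n+1)X_2$, i.e. $X_1=X_2$ a.s., which forces $X$ to be degenerate, a contradiction. (Equivalently, if $X$ has a density, then $U_1=X_1/(X_1+R)$ with $R=X_2+\cdots+X_{n+1}$ independent of $X_1$ has a density on $(0,1)$ and so is non-constant.) Hence $d=\mathbb{E}[U_1^2]>(\mathbb{E}[U_1])^2=\tfrac{1}{(n+1)^2}$ strictly.

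Plugging this strict inequality into the bookkeeping relation gives, for any $j\ne k$,
\[
c=\frac{1-(n+1)d}{(n+1)n}<\frac{1-1/(n+1)}{(n+1)n}=\frac{1}{(n+1)^2}=\mathbb{E}[U_j]\,\mathbb{E}[U_k],
\]
so $\operatorname{Cov}(U_j,U_k)=c-\tfrac{1}{(n+1)^2}<0$. A nonzero covariance rules out independence, which is exactly the claim, and since the computation is symmetric in the indices it applies to every pair $j\ne k$. As a remark I would note that one could instead mimic the conditional-support contradiction of Lemma~\ref{non-indep} via $U+V<1$ a.s. (valid once $n+1\ge 3$, since a third strictly positive summand is present), but that route additionally requires knowing that the support of $U$ extends above $1/2$, whereas the covariance argument sidesteps any such support question.
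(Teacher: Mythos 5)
Your proof is correct, but it takes a genuinely different route from the paper's. The paper's argument is a reduction: it observes that if $U=X_j/S$ and $V=X_k/S$ were independent, then so would $U$ and $1-V$, and after relabeling the indices this is exactly a statement about $Y=X_1/S$ and $Z=(\sum_{j=1}^n X_j)/S$ that Lemma~\ref{non-indep} already rules out. Your argument instead computes the covariance directly: exchangeability gives $\mathbb{E}[U_i]=\tfrac{1}{n+1}$, squaring the constraint $\sum_i U_i=1$ produces the moment relation $(n+1)d+(n+1)nc=1$, and the only substantive step is showing $U_1$ is non-degenerate so that $d>\tfrac{1}{(n+1)^2}$, from which $\operatorname{Cov}(U_j,U_k)<0$ follows. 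The trade-offs: your route is self-contained (it does not lean on Lemma~\ref{non-indep}), works under the weaker hypothesis that $X$ merely be non-degenerate (whereas the paper's conditional-density argument in Lemma~\ref{non-indep} implicitly needs $X$ to admit a density), handles the boundary case $n=1$ where $V=1-U$ deterministically, and delivers the strictly stronger quantitative conclusion that the covariance is strictly negative. The paper's route is shorter once Lemma~\ref{non-indep} is in hand and is better aligned stylistically with the surrounding development, which is organized around that lemma. Both are valid; yours buys robustness and a quantitative refinement at the cost of being a new argument rather than a corollary in the literal sense.
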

\begin{proof}
    If they are independent, then $f(U)=U,g(V)=1-V$ should be independent, but with a proper reordering of the indices we see that this can't be the case as it is demanding that  $Y=\frac{X_1}{\sum_{j=1}^{n+1} X_{j}}$ and $Z=\frac{\sum_{j=1}^{n} X_j}{\sum_{j=1}^{n+1} X_{j}}$ are independent, which is a special case for \cref{non-indep}.
\end{proof}
\textbf{\underline{Idealized scenario}}

To ensure independence and increase randomness to obtain theoretical lower bounds on singular values, one possible scenario is to break the interval $(0,2\pi)$ into $n$ parts, domain segments $p_j\in [\frac{2\pi}{n+1}j-\frac{\pi}{n+1},\frac{2\pi}{n+1}j+\frac{\pi}{n+1}], \forall j\in\{1,2,\cdots ,n\}$, with the partition points chosen independently in these domain segments. This effectively eradicates the prior information that $p_i$'s value incurred on $p_{i+1}$ in the random interval partition case, since the distribution of  $p_{i+1}$ will be shifted once $p_{i}$ is fixed. Another candidate partition strategy, guaranteeing symmetry of the interval $(0,2\pi)$, is $p_j\in (\frac{2\pi j}{n}-\frac{2\pi}{n},\frac{2\pi j}{n})$. See Supplementary material (\ref{randompartit}) for a visualization, derivation of the random partition distribution, and the contrast of this assumption with random partitioning (\ref{contrastcase}).

Let's consider the LT matrix $\mathbf{A} = (a_{ij})$ as a size $n'\times n$ matrix where $n'$ is the number of data points and $n$ is the number of partitions. Naturally $n'\ge n$, and $n' = g(n)$ for some specific function, $g(n)$. In practice, this can be a fixed aspect ratio like $n' = 2n$, or quadratic like $n' = n^2$. We want to determine the smallest singular value of this LT matrix.

\begin{theorem}
    (Non-asymptotic random matrix with heavy tails on independent rows \cite{vershynin2010introduction} p.26) Let $A$ be an $N\times n$ matrix whose rows $A_i$ are independent isotropic vectors. Let $m$ be a number such that $\|A_i\|_2\leq \sqrt{m}$ almost surely $\forall$ i. Then for $t\geq 0$ with probability at least $1-2n \exp(-ct^2)$,
    \begin{equation}
        \sqrt{N} -t\sqrt{m}\leq s_{\text{min}}(A),
    \end{equation}
    where $c>0$ is an absolute constant.
    \label{randmth}
\end{theorem}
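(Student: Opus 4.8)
This statement is quoted from \cite{vershynin2010introduction}, so the plan is to reproduce the standard argument, which turns the lower bound on $s_{\text{min}}(A)$ into an operator-norm concentration estimate for the sample second-moment matrix. First I would pass from singular values to the Gram matrix: since $A$ is $N\times n$ with $N\ge n$,
\begin{equation*}
s_{\text{min}}(A)^2=\lambda_{\min}(A^{*}A)\ \ge\ N-\big\|A^{*}A-N\,I_n\big\| ,
\end{equation*}
so it suffices to show $\|A^{*}A-N I_n\|<t\sqrt{Nm}$ with the claimed probability; the theorem then follows from the elementary subadditivity $\sqrt{N-t\sqrt{Nm}}\ge\sqrt{N}-t\sqrt{m}$, valid whenever $0\le t\sqrt{Nm}\le N$, while the complementary range is vacuous because there $\sqrt N-t\sqrt m<0\le s_{\text{min}}(A)$.

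Next I would write $A^{*}A-N I_n=\sum_{i=1}^{N}X_i$ with $X_i:=A_iA_i^{*}-I_n$. Independence of the rows makes the $X_i$ independent Hermitian matrices, and isotropy is precisely $\mathbb{E}[A_iA_i^{*}]=I_n$, so $\mathbb{E}X_i=0$. The uniform bound $\|A_i\|_2\le\sqrt m$ controls both the size of each summand and the variance proxy: the eigenvalues of $X_i$ are $\|A_i\|_2^2-1$ (simple) and $-1$, so $\|X_i\|\le\max(\|A_i\|_2^2-1,\,1)\le m$ (here $m\ge\operatorname{tr}(I_n)=n\ge 1$), while $(A_iA_i^{*})^2=\|A_i\|_2^2\,A_iA_i^{*}\preceq m\,A_iA_i^{*}$ gives
\begin{equation*}
\mathbb{E}X_i^2=\mathbb{E}\big[(A_iA_i^{*})^2\big]-I_n\ \preceq\ m\,\mathbb{E}[A_iA_i^{*}]-I_n\ \preceq\ m\,I_n ,\qquad\Big\|\sum_{i=1}^{N}\mathbb{E}X_i^2\Big\|\le Nm .
\end{equation*}

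Then I would apply the non-commutative Bernstein-type inequality (Ahlswede--Winter / Rudelson, see \cite{vershynin2010introduction}): for independent centered Hermitian $n\times n$ matrices with $\|X_i\|\le K$ and $\big\|\sum_i\mathbb{E}X_i^2\big\|\le\sigma^2$ one has $\Pr\big[\|\sum_iX_i\|\ge s\big]\le 2n\exp\big(-(s^2/2)/(\sigma^2+Ks/3)\big)$. Taking $K=m$, $\sigma^2=Nm$ and substituting $s=t\sqrt{Nm}$ in the range $t\le\sqrt{N/m}$ (so $s\le N$ and $Ks/3\le Nm/3$) makes the exponent at most $-3t^2/8$, hence $\|A^{*}A-N I_n\|<t\sqrt{Nm}$ with probability at least $1-2n\exp(-3t^2/8)$; combined with the first step and $c:=3/8$ this proves the theorem (the range $t>\sqrt{N/m}$ being vacuous, as noted). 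For complex $A$ nothing changes except that $^{*}$ is the conjugate transpose and ``Hermitian'' replaces ``self-adjoint''.

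The one genuinely non-trivial ingredient is the matrix Bernstein inequality itself, whose proof rests on the Golden--Thompson inequality and a Chernoff/Laplace-transform bound on the top eigenvalue; I would invoke it as a black box. Everything else is bookkeeping, the crucial points being that the hypothesis ``$A_i$ isotropic'' is literally ``$\mathbb{E}[A_iA_i^{*}]=I_n$'', that the almost-sure row bound simultaneously caps $\|X_i\|$ by $m$ and $\big\|\sum_i\mathbb{E}X_i^2\big\|$ by $Nm$, and that $x\mapsto\sqrt x$ is subadditive. The main thing to be careful about is the dimension factor $2n$ in the tail: it forces the use of a matrix concentration inequality, since a covering-net argument over $S^{n-1}$ with scalar Bernstein would only produce a factor $C^n$, which is weaker than $2n$ unless $t^2\gtrsim n$.
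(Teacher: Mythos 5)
Your proof is correct and follows essentially the same route as Vershynin's own argument in the cited source (Theorem 5.41 of \cite{vershynin2010introduction}): the paper itself states this theorem without proof, citing it directly, and the standard derivation is exactly the passage to the Gram matrix, the decomposition $A^{*}A - NI_n = \sum_i (A_iA_i^{*}-I_n)$, the bounds $\|X_i\|\le m$ and $\|\sum_i\mathbb{E}X_i^2\|\le Nm$ (using $m\ge n$ forced by isotropy), and the matrix Bernstein inequality followed by subadditivity of $\sqrt{\cdot}$. Your bookkeeping around the vacuous range $t>\sqrt{N/m}$ and the constant $c=3/8$ is sound.
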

The following theorem of Rudelson and Vershynin \cite{rudelson2009smallest} suggests that the smallest singular value of a Hermitian random matrix, with upper triangle i.i.d. elements and a distribution density satisfying a Gaussian bound with unit variance, is of the size $\sqrt{n'}-\sqrt{n}$, which is a more stringent condition compared to the version in Theorem \ref{randmth}. 


\begin{theorem}[Rudelson, Vershynin 2009 \cite{rudelson2009smallest}]\label{thm:mvt}
Let $A$ be an $n' \times n$ random matrix, $n' \geq n$, whose elements are independent copies of a mean zero sub-Gaussian random variable with unit variance. Then, for every $\varepsilon>0$, we have
\begin{equation}
\mathbb{P}\left(s_{\text{min}}(A)\equiv\sigma_{n}(A) \leq \varepsilon(\sqrt{n'}-\sqrt{n-1})\right) \leq(C \varepsilon)^{n'-n+1}+e^{-c n'},   
\end{equation}

where $\sigma_n$ is the smallest singular value; $C, c>0$ depend (polynomially) only on the sub-Gaussian moment $B$.

\end{theorem}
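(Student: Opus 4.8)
The statement is quoted from Rudelson and Vershynin, so the plan is to reconstruct the architecture of their argument. Write $s_{\min}(A)=\sigma_n(A)=\inf_{x\in S^{n-1}}\|Ax\|_2$, so the goal is a lower bound on $\|Ax\|_2$ uniform over the unit sphere of $\mathbb{R}^n$. The organizing device will be a dichotomy: fix small absolute constants $\delta,\rho\in(0,1)$ and split $S^{n-1}$ into \emph{compressible} vectors $\mathrm{Comp}(\delta,\rho)$, those within Euclidean distance $\rho$ of a vector supported on at most $\lceil\delta n\rceil$ coordinates, and \emph{incompressible} vectors $\mathrm{Incomp}(\delta,\rho)=S^{n-1}\setminus\mathrm{Comp}(\delta,\rho)$. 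The two families are handled by different mechanisms, producing the terms $e^{-cn'}$ and $(C\varepsilon)^{n'-n+1}$ respectively.

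For compressible vectors I would use a covering argument: $\mathrm{Comp}(\delta,\rho)$ admits an $(\rho/2)$-net of size $\exp(C_1\,\delta\log(1/\delta)\,n)$, and for a single fixed $x$ the quantity $\|Ax\|_2^2=\sum_{i=1}^{n'}|\langle A_i,x\rangle|^2$ is a sum of $n'$ i.i.d.\ nonnegative variables of mean $1$ with sub-Gaussian square roots, so a Bernstein/Laplace-transform estimate gives $\mathbb{P}(\|Ax\|_2\le\tfrac12\sqrt{n'})\le e^{-c_1n'}$ with $c_1=c_1(B)>0$. Choosing $\delta$ so that $C_1\delta\log(1/\delta)<c_1/2$ and using $n'\ge n$, a union bound over the net plus the usual approximation step yields $\inf_{\mathrm{Comp}}\|Ax\|_2\ge\tfrac14\sqrt{n'}$ with probability at least $1-e^{-cn'}$; since $\tfrac14\sqrt{n'}\ge\varepsilon(\sqrt{n'}-\sqrt{n-1})$ once $\varepsilon$ is below an absolute constant --- and for larger $\varepsilon$ the claimed bound is vacuous since $(C\varepsilon)^{n'-n+1}\ge1$ --- this regime contributes only $e^{-cn'}$.

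For incompressible vectors I would exploit their spread: every $x\in\mathrm{Incomp}(\delta,\rho)$ has a set of at least $c_2 n$ coordinates $k$ with $|x_k|\ge\rho/\sqrt n$, and for such $k$, with $A^{(k)}$ the $k$-th column and $H_k$ the span of the remaining $n-1$ columns, $\|Ax\|_2\ge|x_k|\,\mathrm{dist}(A^{(k)},H_k)\ge(\rho/\sqrt n)\,\mathrm{dist}(A^{(k)},H_k)$. On the event that some incompressible $x$ violates the lower bound, a constant fraction of the columns therefore have $\mathrm{dist}(A^{(k)},H_k)$ correspondingly small, and a first-moment (averaging) argument over $k$ --- using that $A^{(k)}$ is independent of $H_k$ --- reduces matters to $\mathbb{P}(\mathrm{dist}(A^{(1)},H_1)\le s)$ for an appropriate $s$. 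Since $\dim H_1\le n-1$, this distance is $\|P_EA^{(1)}\|_2$ with $E=H_1^{\perp}$ of dimension $\ge n'-n+1$, and the crux is a sharp anti-concentration (small-ball, Littlewood--Offord-type) estimate of the form $\mathbb{P}(\|P_EA^{(1)}\|_2\le\varepsilon\sqrt{n'-n+1})\le(C\varepsilon)^{n'-n+1}$ --- the exponent being exactly $\dim E$, which is how $n'-n+1$ enters. Calibrating $\sqrt{n'}-\sqrt{n-1}=(n'-n+1)/(\sqrt{n'}+\sqrt{n-1})$ against the value $\sqrt{n'-n+1}$ around which $\mathrm{dist}(A^{(1)},H_1)$ concentrates, and rescaling $\varepsilon$, then yields $(C\varepsilon)^{n'-n+1}$ with $C,c$ polynomial in $B$; adding the compressible contribution gives the stated inequality.

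The hard part will be the incompressible estimate. Getting the anti-concentration exponent to equal $\dim E$ exactly, rather than a constant fraction of it, is the delicate core: it requires either a careful tensorized small-ball bound over an orthonormal basis of $E$ or, in the regime where $n'$ is a constant multiple of $n$ or larger, a net on the incompressible sphere together with a Berry--Esseen-type argument showing each $\langle A_i,x\rangle$ is genuinely anti-concentrated when $\|x\|_\infty$ is small; reconciling these two regimes uniformly in the aspect ratio $n'/n$, and propagating the polynomial dependence of all constants on the sub-Gaussian moment $B$, is where essentially all the work lies. The ancillary facts --- that incompressibility forces a constant fraction of large coordinates and that $\mathrm{dist}(A^{(1)},H_1)$ concentrates at $\sqrt{n'-n+1}$ --- are comparatively routine.
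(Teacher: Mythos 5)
The paper quotes this result directly from Rudelson--Vershynin without reproducing a proof, so there is no in-paper argument to compare against; what you have written is a reconstruction of the RV\,2009 proof architecture, and at the level of strategy it is faithful. The compressible/incompressible dichotomy of the sphere, the net plus union-bound argument yielding the $e^{-cn'}$ term on the compressible cap, the reduction of the incompressible case via the ``invertibility through distance'' mechanism to $\mathrm{dist}(A^{(1)},H_1)$, and the identification of the crux as a small-ball estimate whose exponent must equal $\dim E = n'-n+1$ exactly --- all of this is precisely how RV proceed, and you correctly flag that nailing the exponent (rather than a constant fraction of it) and handling all aspect ratios $n'/n$ uniformly is where the genuine difficulty sits.

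What the sketch leaves undone, as you acknowledge, is the tensorized anti-concentration lemma itself, i.e.\ the bound $\mathbb{P}\left(\|P_E A^{(1)}\|_2 \le \varepsilon\sqrt{\dim E}\right) \le (C\varepsilon)^{\dim E}$ for a sub-Gaussian isotropic vector projected onto a subspace; in RV this comes from an Esseen/L\'evy concentration-function estimate coupled with a tensorization lemma, and it is from tracking constants through those two steps that the polynomial dependence of $C,c$ on $B$ emerges. Two smaller points worth tightening: the subspace $E=H_1^{\perp}$ is random, so you must condition on the remaining $n-1$ columns before invoking the small-ball bound (you gesture at this with the independence remark but it should be explicit); and the calibration between the concentration point $\sqrt{n'-n+1}$ of $\mathrm{dist}(A^{(1)},H_1)$, the $\rho/\sqrt{n}$ factor from the spread lemma, and the target threshold $\varepsilon(\sqrt{n'}-\sqrt{n-1})=\varepsilon(n'-n+1)/(\sqrt{n'}+\sqrt{n-1})$ is more delicate than a one-line rescaling, especially in the near-square regime $n'\approx n$. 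As an account of the cited theorem's proof your outline is accurate; as a standalone proof it is a scaffold with the load-bearing lemma still to be built.
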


An idealized sampling strategy for the original matrix to guarantee isotropicity 
$$A_{ij}=-\frac{1}{z_i}\Big(\exp(-z_ip_{i,j})-\exp(-z_ip_{i,j-1})\Big)$$
is not available, however, in Appendix \ref{isotropicb} we demonstrate that isotropicity may be guaranteed for the case

$$B_{ij}=\Big(\exp(-z_ip_{i,j})-\exp(-z_ip_{i,j-1})\Big).$$

The following two lemmas present constraints for the \textbf{phase sampling} ensuring the matrix element isotropicity. A schematic for this is the following
\begin{figure}[H]
    \centering
    \includegraphics[width=16cm,height=7cm]{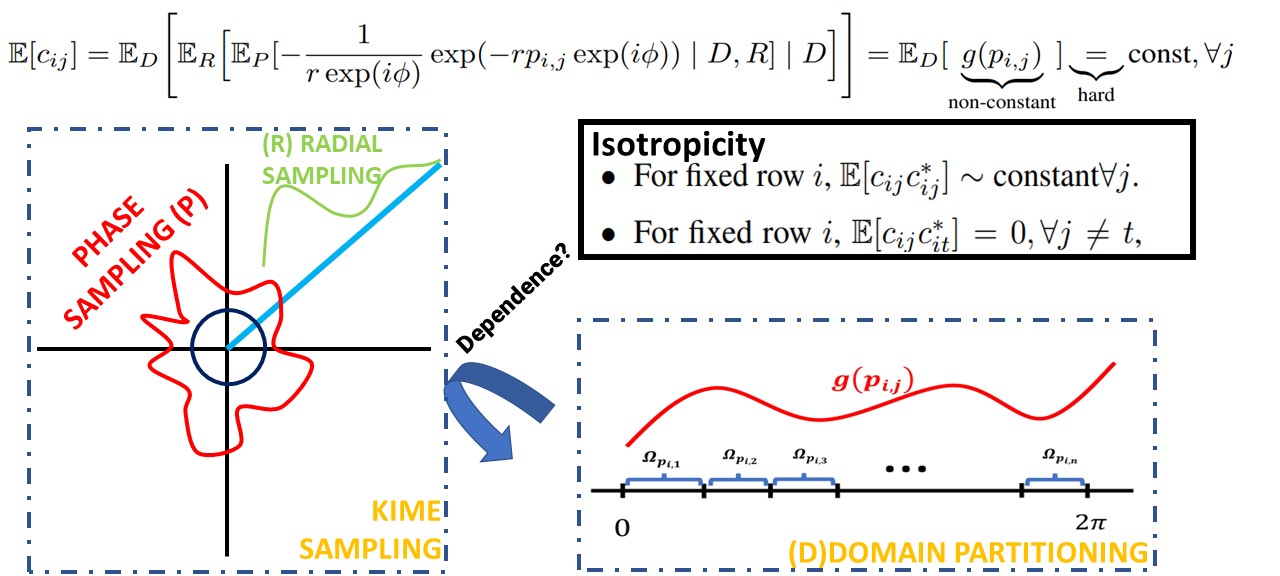}
    \caption{Illustration of the integral transforms (expectation) to get the isotropicity).}
    \label{fig:my_label}
\end{figure}


\begin{lemma}
($B_{ij}$ sampling) Given that $\phi$ and $a$ (the radial sampling, and the domain partitioning altogether is resembled by $a$) are independent, i.e., the sampling scheme, kime phase, and kime magnitude (radial) sampling processes are independent, then the only $f_{\Phi}(\phi)\in L^2([0,2\pi])$ such that  
\begin{equation}
    \int_0^{2\pi} \exp(-a\exp(i\phi))f_{\Phi}(\phi)d\phi = \text{const},
\end{equation}  
is $f_{\Phi}(\phi)=\frac{1}{2\pi}.$
\end{lemma}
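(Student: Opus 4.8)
The plan is to expand the kernel $\exp(-a\exp(i\phi))$ in its everywhere-convergent power series, read off the hypothesis as a constraint on the Fourier coefficients of $f_\Phi$, and then exploit the fact that $f_\Phi$ is a probability density (hence real-valued) to push the vanishing of ``half'' the Fourier coefficients to all of them. First I would fix $a$ in the relevant sampling range and write
\begin{equation}
\int_0^{2\pi}\exp(-a e^{i\phi})\,f_\Phi(\phi)\,d\phi=\sum_{n=0}^{\infty}\frac{(-a)^n}{n!}\int_0^{2\pi}e^{in\phi}f_\Phi(\phi)\,d\phi=\sum_{n=0}^{\infty}\frac{(-a)^n}{n!}\,c_n,
\end{equation}
where $c_n:=\int_0^{2\pi}e^{in\phi}f_\Phi(\phi)\,d\phi$. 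Interchanging sum and integral is legitimate because $\sum_n \frac{a^n}{n!}\,|e^{in\phi}|=e^{a}<\infty$ uniformly in $\phi$ and $f_\Phi\in L^2([0,2\pi])\subset L^1([0,2\pi])$; equivalently, $a\mapsto\int_0^{2\pi}\exp(-ae^{i\phi})f_\Phi(\phi)\,d\phi$ is an entire function of $a$ whose Taylor coefficient of order $n$ at $0$ equals $(-1)^n c_n/n!$.

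Next I would invoke the hypothesis that this quantity is constant in $a$ (over the sampling range, which is a set with an accumulation point): by the identity theorem the entire function above is then globally constant, so every Taylor coefficient of positive order vanishes, i.e.\ $c_n=0$ for all $n\ge 1$. In Fourier terms this forces the Fourier coefficients of $f_\Phi$ at all negative frequencies (with the sign convention implicit in the $c_n$) to vanish, but on its own this is not enough to conclude $f_\Phi$ is constant — indeed $f_\Phi(\phi)=\tfrac{1}{2\pi}(1+e^{i\phi})$ satisfies $c_n=0$ for $n\ge 1$ while making the integral equal to the constant $1$, so some extra hypothesis is unavoidable.

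The step that closes the argument, and the one I expect to be the genuine crux, is using that $f_\Phi$ is a probability density on $[0,2\pi]$ and hence real-valued. Then $\overline{c_n}=\int_0^{2\pi}e^{-in\phi}f_\Phi(\phi)\,d\phi=c_{-n}$, so $c_{-n}=0$ for all $n\ge 1$ as well; combined with the previous step, $c_m=0$ for every $m\neq 0$. By completeness of $\{e^{im\phi}\}_{m\in\mathbb{Z}}$ in $L^2([0,2\pi])$, $f_\Phi$ equals its zeroth Fourier component almost everywhere, i.e.\ $f_\Phi\equiv c_0/(2\pi)$, and the normalization $\int_0^{2\pi}f_\Phi=1$ gives $c_0=1$, hence $f_\Phi(\phi)=\tfrac{1}{2\pi}$ a.e. The only delicate points to be careful about are (i) recording precisely which integrability/analyticity fact justifies the termwise integration and the identity-theorem step, and (ii) making explicit where the reality of $f_\Phi$ enters, since, as the counterexample shows, the conclusion is false without it.
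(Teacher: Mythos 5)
Your proposal follows essentially the same route as the paper's proof: expand $\exp(-ae^{i\phi})$ in its Taylor series in $a$, use that the integral is constant in $a$ to kill every Taylor coefficient of positive order, and thereby force the Fourier coefficients $c_n=\int_0^{2\pi}e^{in\phi}f_\Phi(\phi)\,d\phi$ to vanish for $n\ge1$. Where you improve on the paper's exposition is in the final step. The paper jumps from $c_n=0$ for $n\ge1$ to ``all the real and imaginary parts of the Fourier--Euler coefficients vanish'' and declares $f_\Phi$ constant, but that deduction silently invokes the reality of $f_\Phi$ (equivalently, that it is a genuine probability density, not merely an element of $L^2([0,2\pi])$, which the lemma's literal hypothesis would allow to be complex-valued). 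You make this explicit: for real $f_\Phi$ one has $c_{-n}=\overline{c_n}=0$, which is exactly what kills the remaining half of the spectrum, and you supply the clean counterexample $f_\Phi(\phi)=\tfrac{1}{2\pi}(1+e^{i\phi})$ showing that the conclusion is genuinely false without that extra hypothesis. You also record the analyticity/identity-theorem justification for passing from ``constant on the sampling range'' to ``all positive-order derivatives at $0$ vanish,'' which the paper leaves implicit. So the mathematics is the same, but your writeup closes a small gap in the paper's argument and is the more careful of the two.
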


\begin{lemma}
($A_{ij}$ sampling) If $a$ and $\phi$ are independent (meaning that the phase sampling is independent of the radial direction and the domain partitioning), then there exists no square-integrable function $f_{\Phi}(\phi)$, such that
\begin{equation}
    \int_0^{2\pi} \exp(a\exp(i\phi))\exp(-i\phi)f_{\Phi}(\phi)d\phi = \text{const}
\end{equation}
where the constant is independent of $a$.
\end{lemma}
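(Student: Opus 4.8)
The plan is to expand the kernel in a power series in $a$ and match coefficients. Since $\exp(a e^{i\phi}) = \sum_{n\ge 0} \frac{a^{n}}{n!} e^{in\phi}$ converges uniformly in $\phi$ for each fixed $a$ and $e^{-i\phi} f_{\Phi}(\phi) \in L^{1}([0,2\pi])$ (as $f_{\Phi}\in L^{2}\subset L^{1}$ on a finite interval), term-by-term integration is legitimate and gives
\begin{equation*}
\int_{0}^{2\pi} \exp\!\big(a e^{i\phi}\big)\, e^{-i\phi} f_{\Phi}(\phi)\, d\phi
= \sum_{n=0}^{\infty} \frac{a^{n}}{n!}\, g_{n-1},
\qquad
g_{m} := \int_{0}^{2\pi} e^{im\phi} f_{\Phi}(\phi)\, d\phi,
\end{equation*}
so that $g_{m} = 2\pi\, c_{-m}$, where $c_{k}$ are the Fourier coefficients of $f_{\Phi}$. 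Because $|g_{m}|\le \|f_{\Phi}\|_{L^{1}}$, the right-hand side defines an entire function of $a\in\mathbb{C}$; hence, if it is constant on the relevant range of $a$ (products of positive radii and partition endpoints, an infinite set with a limit point), the identity theorem forces it to be globally constant, so the coefficient of $a^{n}$ must vanish for every $n\ge 1$, i.e. $g_{m}=0$ for all $m\ge 0$.

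The contradiction then comes from the single term $n=1$, whose coefficient is
\begin{equation*}
g_{0} = \int_{0}^{2\pi} e^{i\phi}\,e^{-i\phi} f_{\Phi}(\phi)\, d\phi = \int_{0}^{2\pi} f_{\Phi}(\phi)\, d\phi ,
\end{equation*}
which equals $1$ because $f_{\Phi}$ is the probability density of the phase $\Phi$ (real, non-negative, total mass one). Thus $g_{0}=1\ne 0$, contradicting $g_{0}=0$, so no such $f_{\Phi}$ exists. Equivalently, differentiating the alleged constant in $a$ at $a=0$ returns $\int_{0}^{2\pi} f_{\Phi}=1\ne 0$, the same obstruction.

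I would also record the structural reason this differs from the preceding lemma: there the kernel is $\exp(-a e^{i\phi})$ with no $e^{-i\phi}$ factor, so the constancy constraints annihilate the Fourier coefficients $c_{-1},c_{-2},\dots$ but leave $c_{0}$ free, pinning down the unique uniform density; here the extra $e^{-i\phi}$ shifts the index by one, so the $n=1$ constraint lands exactly on $c_{0}$ — the mean of $f_{\Phi}$ — which a genuine density cannot make vanish. The one point requiring care is the standing modeling assumption that $f_{\Phi}$ is a bona fide density: a bare complex $L^{2}$ function such as $f_{\Phi}(\phi)=e^{i\phi}$ does satisfy the identity (then $g_{0}=0$ and the integral is the constant $2\pi$), so the statement must be read with that hypothesis in force (note also that for a merely real-valued nonzero $f_{\Phi}$ the constraints $g_m=0$, $m\ge 0$, already force $f_{\Phi}\equiv 0$ via $c_{-k}=\overline{c_k}$). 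Beyond making the density hypothesis explicit, the remaining items — justifying the interchange of sum and integral and the passage from ``constant on the sampled range of $a$'' to ``all higher coefficients vanish'' — are routine, and I do not anticipate a deeper obstacle.
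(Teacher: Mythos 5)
Your argument is essentially the paper's: the paper defines $I(a)=\int_0^{2\pi}\exp(a e^{i\phi}-i\phi)f_{\Phi}(\phi)\,d\phi$, observes $I'(0)=\int_0^{2\pi}f_{\Phi}=1$, and concludes $I$ cannot be constant in $a$; your power-series expansion and coefficient-of-$a^1$ computation is the same obstruction, which you yourself note in the ``equivalently, differentiating at $a=0$'' remark. Your additional caveat that $f_{\Phi}$ must be a genuine probability density (ruling out counterexamples like $f_{\Phi}=e^{i\phi}$) is a fair point the paper leaves implicit, but the core argument is identical.
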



These two lemmas effectively imply that independent phase-space sampling  would make the isotropic condition very difficult to guarantee. That is, the integration results in a (non-constant) function of $a$, which is highly unlikely to be equal in expectation after taking the expectation over the $[0,2\pi]$ partition, for example in $B_{ij}$ case,
\begin{equation}
    \mathbb{E}[c_{ij}]=\mathbb{E}_{D}\left [ \mathbb{E}_{R}\left [ \mathbb{E}_{P}\left [-\frac{1}{r\exp(i\phi)}\exp(-rp_{i,j}\exp(i\phi))\mid D,R\right ]\mid D\right ] \right ] =\mathbb{E}_{D}[\underbrace{g(p_{i,j})}_{\text{non-constant}}] \underbrace{=}_{\text{hard}} \text{const},  \forall j
\end{equation}

where $D,R,P$ stands for domain partitioning, radial expectation, and phase variable expectation respectively. The last equality means that solving the domain partition strategy $D$ on $n$ of $j's$ equations, and often impossible (for example when $g$ is monotonic).
Proofs and further discussion is given in Appendix \ref{limitationthm}

Furthermore, the entries of the LT matrix $\mathbf{A}$ are far from independent. Hence, there is little guarantee that direct applications of random matrix theory may ensure reasonable approximations of the smallest singular value of the LT matrix. However, there are several {\it{ad hoc}} strategies (such as random noise smoothing \cite{tao2010smooth} and regularization) where we may sacrifice reconstruction accuracy for algorithmic stability. Below, we propose a conjecture about the smallest singular value of the LT matrix.\\

\subsubsection{Experiments, hypothesis, and empirical evaluation}
\begin{proposition}
The smallest singular value $\sigma_n$ of the LT matrix $\mathbf{A}$ satisfies
\begin{equation}
\mathbb{E}(\sigma_n(\mathbf{A})) = h(n)    
\end{equation}

and there exists $\gamma\in(0,2)$ such that \begin{equation}
h(n)\sim \frac{1}{n^\gamma} .    
\end{equation}
\label{hypothesis}
\end{proposition}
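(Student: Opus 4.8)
The plan is to factor the LT matrix into a ``geometric'' diagonal piece and an ``analytic'' exponential-sum piece, reduce the statement to the conditioning of the latter, transport the random-matrix estimates of Theorems~\ref{randmth}--\ref{thm:mvt} through the reduction on the \emph{idealized} surrogates of the Appendix, and finally average over the randomness in Algorithm~\ref{alg:cap}. First I would rewrite each entry as a quadrature weight, $a_{ij}=\int_{p_{j-1}}^{p_j}e^{-z_i t}\,dt$, so that $\mathbf{A}=KD$ with $D=\mathrm{diag}(\Delta_1,\dots,\Delta_n)$, $\Delta_j=p_j-p_{j-1}$, and $K_{ij}=\Delta_j^{-1}\int_{p_{j-1}}^{p_j}e^{-z_i t}\,dt$ an averaged-kernel matrix whose entries are $O(1)$ because $\mathrm{Re}(z_i)\ge 0$ and $t\le 2\pi$. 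Submultiplicativity of singular values then pins $\sigma_n(\mathbf{A})$ between $\sigma_n(K)\min_j\Delta_j$ and $\sigma_n(K)\max_j\Delta_j$, so $\sigma_n(\mathbf{A})$ is the product of a partition factor $\asymp\min_j\Delta_j\asymp\max_j\Delta_j$ (which is $\asymp 1/n$ whenever the partition is not too irregular) and an analytic factor $\sigma_n(K)$.

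For the \textbf{upper bound} on $\sigma_n(\mathbf{A})$ (hence a lower bound on $\gamma$) I would exhibit explicit near-null vectors: take the oscillatory piecewise-constant vector $u_j=(-1)^j/\sqrt n$, so that $(\mathbf{A}u)_i=\tfrac1{\sqrt n}\int_0^{2\pi}\psi(t)e^{-z_i t}\,dt$ where $\psi$ alternates sign on the intervals of $\mathbf{p}$; pairing consecutive intervals of width $\asymp 1/n$ gives $(\mathbf{A}u)_i=O(n^{-3/2})$ for $|z_i|$ bounded, whence $\sigma_n(\mathbf{A})\le\|\mathbf{A}u\|_2=O(\sqrt{n'}\,n^{-3/2})$. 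For the \textbf{lower bound} (hence $\gamma<2$) I would compare $K$ with the node matrix $\widetilde K_{ij}=e^{-z_i\xi_j}$, $\xi_j\in[p_{j-1},p_j]$; since $t\mapsto e^{-z_i t}$ varies by $O(|z_i|/n)$ across an interval of width $\asymp 1/n$ one gets $\|K-\widetilde K\|=O(1/n)$, so $\sigma_n(K)=\sigma_n(\widetilde K)+O(1/n)$. Writing $z_i=r_ie^{i\theta_i}$ splits $\widetilde K$ into a bounded diagonal damping $e^{-r_i\cos\theta_i\,\xi_j}$ and a non-uniform Fourier matrix $e^{-ir_i\sin\theta_i\,\xi_j}$ whose nodes $\xi_j$ are $\asymp 1/n$-separated; a discrete Ingham-type / separated-node conditioning bound then lower-bounds $\sigma_n(\widetilde K)$ by a \emph{polynomial} in $n$ (no worse than $n^{-1}$), reflecting that a Laplace kernel on a bounded interval with bounded $\mathrm{Re}(z_i)$ is not exponentially ill-conditioned in the manner of a Hilbert matrix. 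This is exactly where the idealized partitioning of Appendix~\ref{isotropicb} (independent placement of $p_j$ in disjoint segments, and the isotropic model for $B_{ij}$) lets Theorems~\ref{randmth} and \ref{thm:mvt} be applied to produce $\sigma_n\gtrsim(\sqrt{n'}-\sqrt{n-1})\cdot(\text{entry scale}\ \asymp n^{-1})$.

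Next I would take expectations over the partition law $P$ and the data resampling in Algorithm~\ref{alg:cap}. Using the normalized-sum construction $\Delta_j=2\pi X_j/\sum_{k=1}^{n+1}X_k$ one has $\mathbb{E}[\Delta_j]=2\pi/(n+1)$, and standard concentration for sums of i.i.d.\ positive $X_k$ of finite variance gives $\min_j\Delta_j,\ \max_j\Delta_j\asymp 1/n$ with overwhelming probability; the a priori bound $\sigma_n(\mathbf{A})\le\|\mathbf{A}\|_F/\sqrt n$, which is $O(1)$ for a fixed aspect ratio $n'=g(n)$, absorbs the negligible exceptional event. Trapping $\mathbb{E}[\sigma_n(\mathbf{A})]$ between the resulting high-probability two-sided estimates yields $\mathbb{E}(\sigma_n(\mathbf{A}))=h(n)$ with $h(n)\asymp n^{-\gamma}$ and $\gamma\in(0,2)$, the endpoints excluded because $g$ is neither constant nor super-quadratic, and the precise $\gamma$ depending on $P$ and on the sampling law of the $z_i$.

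The hard part --- and the reason this is stated as a proposition corroborated by the empirical evidence above rather than a theorem --- is that the entries of $\mathbf{A}$ are genuinely non-independent (Lemma~\ref{non-indep} and its corollary, together with the phase-sampling obstructions of Appendix~\ref{limitationthm}), so the isotropy hypotheses of Theorems~\ref{randmth}--\ref{thm:mvt} hold only for the idealized surrogates, not for the actual matrix. A rigorous polynomial lower bound on $\sigma_n(\widetilde K)$ for the true correlated exponential-sum ensemble --- and, beyond that, an exact identification of $\gamma$ rather than merely its membership in $(0,2)$ --- would require either a dedicated small-ball / anti-concentration argument tailored to Laplace-kernel matrices or a perturbation-transfer argument from the idealized model; here we substitute the numerical observation $\mathbb{E}(\sigma_n(\mathbf{A}))\sim n^{-\gamma}$ for that missing step.
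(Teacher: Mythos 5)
The paper does not actually prove Proposition~\ref{hypothesis}; immediately after stating it, the authors give only a back-of-envelope heuristic --- rescale Theorem~\ref{thm:mvt} by the entry standard deviation $\sim 1/n$ to guess $\gamma = 0.5$ for $n' = 2n$ and $\gamma = 1.5$ for $n' = n$ --- and then state explicitly that ``a direct analytical proof of proposition~\ref{hypothesis} is not yet available,'' pointing to Figure~\ref{fig:sing} as empirical evidence. Your proposal is a genuinely different and more ambitious program: a factorization $\mathbf{A} = KD$ with a \emph{diagonal} width matrix (the paper instead factors through the first-difference matrix $D$ in Appendix~\ref{isotropicb}, giving $B = CD$), an explicit near-null oscillatory test vector for the upper bound, a perturbation to a node matrix $\widetilde K$ plus an Ingham-type conditioning estimate and the RMT Theorems~\ref{randmth}--\ref{thm:mvt} for the lower bound, and an expectation over the partition law at the end. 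In spirit you and the paper land in the same place: both concede that a rigorous lower bound on $\sigma_n$ for the true (correlated, non-mean-zero, non-isotropic) ensemble is the missing ingredient and fall back on the numerics, as Lemma~\ref{non-indep} and the obstructions of Appendix~\ref{limitationthm} justify.

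However, the two jaws of your sandwich do not meet, and this is a real defect, not merely a technicality. For the fixed aspect ratio $n' = 2n$ your test-vector upper bound gives $\sigma_n(\mathbf{A}) \le \|\mathbf{A}u\|_2 = O(\sqrt{n'}\,n^{-3/2}) = O(n^{-1})$, while your RMT lower bound, $\sigma_n \gtrsim (\sqrt{n'}-\sqrt{n-1})\cdot n^{-1}$, evaluates to $\gtrsim n^{-1/2}$. For large $n$ these are incompatible, so the ``trapping'' step cannot close; in fact the conflict is itself evidence that the variance-rescaled Rudelson--Vershynin bound simply does not hold for the actual LT matrix (a point the paper makes only qualitatively). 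Your Ingham branch is also not robust: you estimate $\|K - \widetilde K\| = O(1/n)$ and want $\sigma_n(\widetilde K) \gtrsim n^{-1}$, but Weyl's inequality gives $\sigma_n(K) \ge \sigma_n(\widetilde K) - \|K - \widetilde K\|$, and when both terms are of the same order $n^{-1}$ the inequality is vacuous without control of the implicit constants, which you do not supply. So the proposal correctly isolates the obstruction the paper flags but, as written, over-claims a two-sided trap that is internally contradictory and does not narrow $\gamma \in (0,2)$ beyond the paper's own heuristic.
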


Theorem \ref{thm:mvt} applies to random variables with unitary variance, whereas in our case, the variance is $O(1/n^2)$. Therefore, it's reasonable to expect that the magnitude of the singular value should be proportionately smaller. 
When we have a fixed aspect ratio, e.g., $n' = 2n$, then $\sqrt{n'} - \sqrt{n-1} \sim (\sqrt{2}-1)\sqrt{n}$. Dividing by $n$, this yields $\frac{\sqrt{2}-1}{\sqrt{n}}$, which corresponds to $\gamma = 0.5$. Whereas, if we have a square matrix $n' = n$ and $\sqrt{n} - \sqrt{n-1}\sim 1/\sqrt{n}$, the same logic yields $\gamma = 1.5$. 

Clearly, the $\gamma$ interval $(0,2)$ appears quite generous and this hypothesis appears to provide rational support for analyzing the performance error. In fact, the following error analysis doesn't really require the least singular value to be of polynomial decay, however, it provides an explicit and pragmatic estimation strategy.


Recall that we start with a function $f$ and
\begin{equation}
b_i = \mathcal{L}f (z_i)+\epsilon_i\ ,    
\end{equation}
where $b_i$ is the data on $z_i$, and $\epsilon_i$ is some sampling and rounding error, which also accounts for the rounding error of $z_i$. Next, we choose $n$ and $n'$ as before, let $p$ be a partition of the interval $(0,2\pi)$ of size $n$, and let $\mathbf{u}$ and $f^p$ be a piecewise constant interpolation of $f$ on $p$, where $\mathbf{u}$ is a vector and $f^p$ a function.
Then, we can derive an upper bound of the error
\begin{equation}
\begin{aligned}
    |\mathcal{L}(f)(z_i) - \mathbf{Au}(i)| &= \left |\int_0^{2\pi}\exp(-z_it)(f(t)-f^p(t))dt\right |\\
    & \le \sum_{i}(p_i-p_{i-1})\sup_{t\in(p_{i-1},p_i)}|f(t)-f^p(t)|\\
    & \sim \frac{2\pi}{n} \sup_t |f'(t)| = \frac{C_2}{n}\ ,
\end{aligned}    
\end{equation}
where $C_2$ is a positive constant depending on $\sup_t |f'(t)|$, which is finite. 
Let $\mathbf{\bar u} = \mathbf{A^{\dagger}b}$, then 
\begin{equation}
\begin{aligned}
|\mathbf{\bar u}-\mathbf{u}| &= |\mathbf{A^{\dagger}}(\mathcal{L}f(\mathbf{z})+\mathbf{\epsilon}-\mathbf{Au})|\\
&=|\mathbf{A^{\dagger}}(\mathcal{L}f(\mathbf{z})-\mathbf{Au})+\mathbf{A^{-1}}\mathbf{\epsilon}| \\
& \le |\mathbf{A^{\dagger}}(\mathcal{L}f(\mathbf{z})-\mathbf{Au})|+|\mathbf{A^{\dagger}}\mathbf{\epsilon}|\\
& \le \frac{|\mathcal{L}f(\mathbf{z})-\mathbf{Au}|+|\mathbf{\epsilon}|}{\sigma_n(\mathbf{A})}\ .
\end{aligned}
\end{equation}

Simple calculations examining this magnitude imply that
\begin{equation}
\mathbb{E}(\mathbf{\bar u}-\mathbf{u}) = 0\ .    
\end{equation}
Let's choose $n' = 2n$ and assume again that
\begin{equation}
\sigma_n(A)\sim \frac{1}{n^{\gamma}}.     
\end{equation}
Since the 2-norm of a vector will multiply by $\sqrt{n'}$, 
\begin{equation}
\begin{aligned}
\mathbb{E}(|\mathbf{\bar u}-\mathbf{u}|) &\lesssim \frac{\sqrt{n'}C_2/n}{\sigma_n} + \frac{\sqrt{n'\Var(\epsilon_i)}}{\sigma_n}\\
& \sim C_2n^{-1/2+\gamma}+C_3n^{1/2+\gamma}\ ,
\end{aligned}
\end{equation}
where $C_3$ is a constant depending on the sampling accuracy $\Var(\epsilon)$. 


Since we are also taking the 2-norm of the vector $\mathbf{u}$, we factor out another $n^{1/2}$ to compensate when changing to the $L^2$ norm. (Informally, the vector norm overcorrects the functional norm expectation by composing $n$ components, and hence a $\sqrt{n}$ correction factor is fitting.)

The error of the function $\bar{f^p}$, corresponding to $\mathbf{u}^p$ becomes
\begin{equation}
\mathbb{E}(||f-\bar{f^p}||)\sim C_2n^{-1+\gamma}+C_3n^{\gamma}\ . 
\label{previous_exp}
\end{equation}

In general, the above estimation shows that the minimal error will depend on the two constants $C_2$ and $C_3$, which control how oscillating the function $f$ is, and how representative of the native process the observed proxy data actually is. 

Since $\gamma>0$ implies that a larger matrix will be closer to singular, increasing the size $n$ will eventually increase the approximation error.

On the other hand, we see when $\gamma\in(0,1)$ the previous expression (\ref{previous_exp}) can be minimized at $n = \frac{(1-\gamma)C_2}{\gamma C_3}$. This suggests that solving the linear system is expected to yield good results. In real experiments, we can set an iteration counter, $\#itn$, and track the recovered function $\bar{f}$
\begin{equation}
\bar{f} = \frac{1}{\#itn}\sum \bar{f^p}\ .    
\end{equation}
Assuming some sufficient independence across each $\bar{f^p}$, the law of large numbers guarantees convergence when $\#itn\longrightarrow\infty$. For an optimal value of the hyperparameter $n$, it is expected that 
\begin{equation}
\Var(\bar{f}-f)\sim\frac{C_4}{\sqrt{\#itn}}\ ,    
\end{equation}

where $C_4$ is a constant depending on $C_2$ and $C_3$.

As a direct analytical proof of proposition \ref{hypothesis} is not yet available, the following experiment provides some intuition and empirical evidence in support of this conjecture. The experimental result on Figure \ref{fig:sing} shows a graph of the singular value of such matrices in log-scale across the range of the hyperparameter $n$.

The expected machine numeric error may be significant when $n$ is large and the least singular value is very close to $0$. Otherwise, the proposed $\frac{1}{n^\gamma}$ decay appears reasonable. Note that the vertical-axis is in a logarithmic scale. As long as the smallest singular value decays ($\gamma>0$, or in fact any kind of decay), the experiments suggest an eventual growth of the error rate when the matrix size increases.

\begin{figure}[H]
  \centering
  \includegraphics[width=0.8\textwidth]{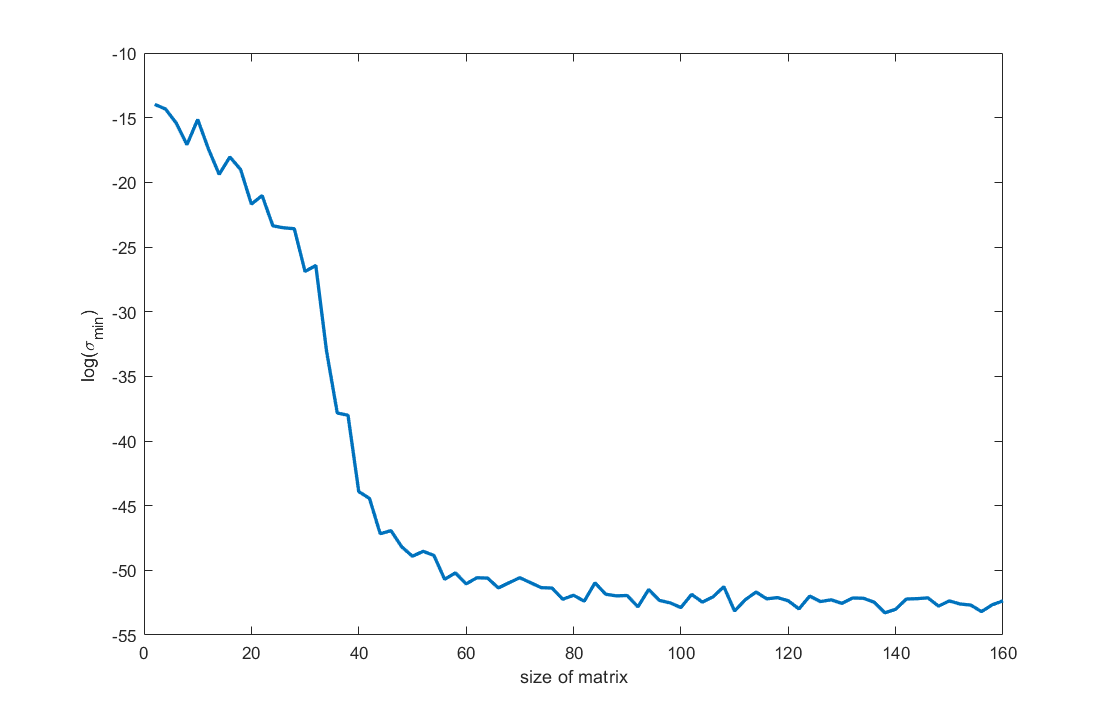}
  \caption{Least Singular Value of the $n \times 1.2n$ LT matrix.}
  \label{fig:sing}
\end{figure}

\section{Conclusion and Discussion}

This paper improves on some of the previous developments in numerical inversions of the Laplace transform. Specifically, we present new computational strategies for efficient and accurate reconstruction of longitudinal data (observed time-series) represented as complex-time surfaces via the Laplace transform. The study identified the following open questions that need to be investigated further.
\begin{itemize}
\item \textbf{Quantify the smallest singular value of the LT matrix}. The convergence of the proposed numerical algorithm depends on the constraints of the smallest singular value, specifically if it is sufficiently concentrated in distribution. In practice, this may be hard to experimentally verify. The choices of the parameters $n, n'$ also effect the algorithmic performance, however, their impact depends on optimality of the domain partitioning scheme.

\item \textbf{Independence \& convergence rate}. In practice, the use of the law of large numbers requires that the errors of each $\bar{f^p}$ are independent. Ultimately, we can't really expect that all errors are IID, as there is only a finite number of choices. However, having the sampled data points along with the corresponding errors does not provide sufficient information about the unknown ground truth process, i.e., the original function $f$. Hence, repeated application of the randomized algorithm multiple times allows us to compute and aggregate ensemble estimates, which may be pooled to obtain a more robust estimate of the original signal $f=\mathcal{L}^{-1}(F)$. Yet, it is possible to show that to a certain extent, the error can be reduced using weaker conditions than independence.
    
\item \textbf{Complexity}. The computational complexity of solving each matrix using the iterative generalized minimal residual method (GMRES) is $O(n')$ \cite{saad1986gmres}. It'd be useful to find better strategies to decide on the choices for optimal algorithmic hyperparameters, including the number of iterations to achieve a certain accuracy.
\end{itemize}

In this manuscript, we analyzed the asymptotic behavior and the error rate of estimating the discrete ILT using randomized domain partitioning and constant function basis. Analogously, similar approaches may be used to estimate the ILT in different function bases, e.g., polynomial or trigonometric function bases. More generally, for other applications where a matrix equation needs to be solved, assuming there is a way to combine the results from multiple attempts, such randomization approaches may also provide efficient and valuable approximation strategies. We also introduced the Laplace transform on groups and discussed a Clifford algebra extension for the Laplace transform.

As firm supporters of {\it{open-science}}, we shared all code, data, and results on the TCIU GitHub page (\url{https://github.com/SOCR/TCIU/}), via the CRAN R-package TCIU (\url{https://cran.r-project.org/web/packages/TCIU/}), and through the SOCR TCIU documentation site (\url{https://TCIU.predictive.space}).

\section{Acknowledgements}

This work was partially supported by NSF grants 1916425, 1734853, 1636840, 1416953, 0716055 and 1023115, NIH grants P20 NR015331, U54 EB020406, P50 NS091856, P30 DK089503, UL1 TR002240, R01 CA233487, R01 MH121079, R01 MH126137, and T32 GM141746. The funding organizations played no role in the study design, data collection and analysis, decision to publish, or preparation of the manuscript. Many colleagues at the University of Michigan Statistics Online Computational Resource (SOCR) and the Michigan Institute for Data Science (MIDAS) contributed ideas, infrastructure, and support for the project.

\newpage

\bibliographystyle{ieeetr}
\bibliography{citations.bib}
\newpage
\section{Appendix}
\subsection{Appendix A \label{sec:Appendix1}}
\textbf{Series expansion for $f(z) = \frac{1}{\text{cosh}z}$}: We apply a Wick rotation $z\to iz$ and then then the formulation is in a more amenable form. Namely, $\overline{f}(z)=\frac{1}{\text{cos}z}$, which has simple poles $z_0$, i.e., $(z-z_0)\overline{f}(z)$ is holomorphic over the neighborhood of $z_0$, where $z_0\in S=\{1/2\pi+k\pi:k\in\mathbb{Z}\}$.  Therefore the full expansion can be written as
\begin{equation}
    \overline{f}(z)=\sum_{z_0\in S}\text{res}_{-1,z_0}(z-z_0)^{-1}+G(z)
\end{equation}
where $G(z)$ is a holomorphic function. The residues by definition and apply L'H\^{o}spital's rule
\begin{equation}
    \text{res}_{-1,z_0} = \lim_{z\to z_0}(z-z_0)\overline{f}(z)=\frac{1}{\text{sin}(z_0)}=(-1)^k .
\end{equation}
We can aggregate $z_0$ and $-z_0$. Therefore,
\begin{equation}
    {res}_{-1,z_0}(z-z_0)^{-1}+{res}_{-1,-z_0}(z+z_0)^{-1} = (-1)^k\frac{1}{z-z_0} +(-1)^{-k-1}\frac{1}{z+z_0}=\frac{(-1)^k(-2z_0)}{z^2-z_0^2} .
\end{equation}
Therefore, the series expansion is  
\begin{equation}
    \overline{f}(z) = 2 \pi \sum_{n=0}^{\infty} \frac{(-1)^{n}(n+1 / 2)}{(n+1 / 2)^{2} \pi^{2}-z^{2}} .
\end{equation}
We arrive at the desired result \ref{cosh:eq} by undoing the Wick rotation. The performance of the 2,5,7,11 first terms truncation residuals are visualized in the following.
\begin{figure}[H]
    \centering
    \includegraphics[width=16cm,height=6cm]{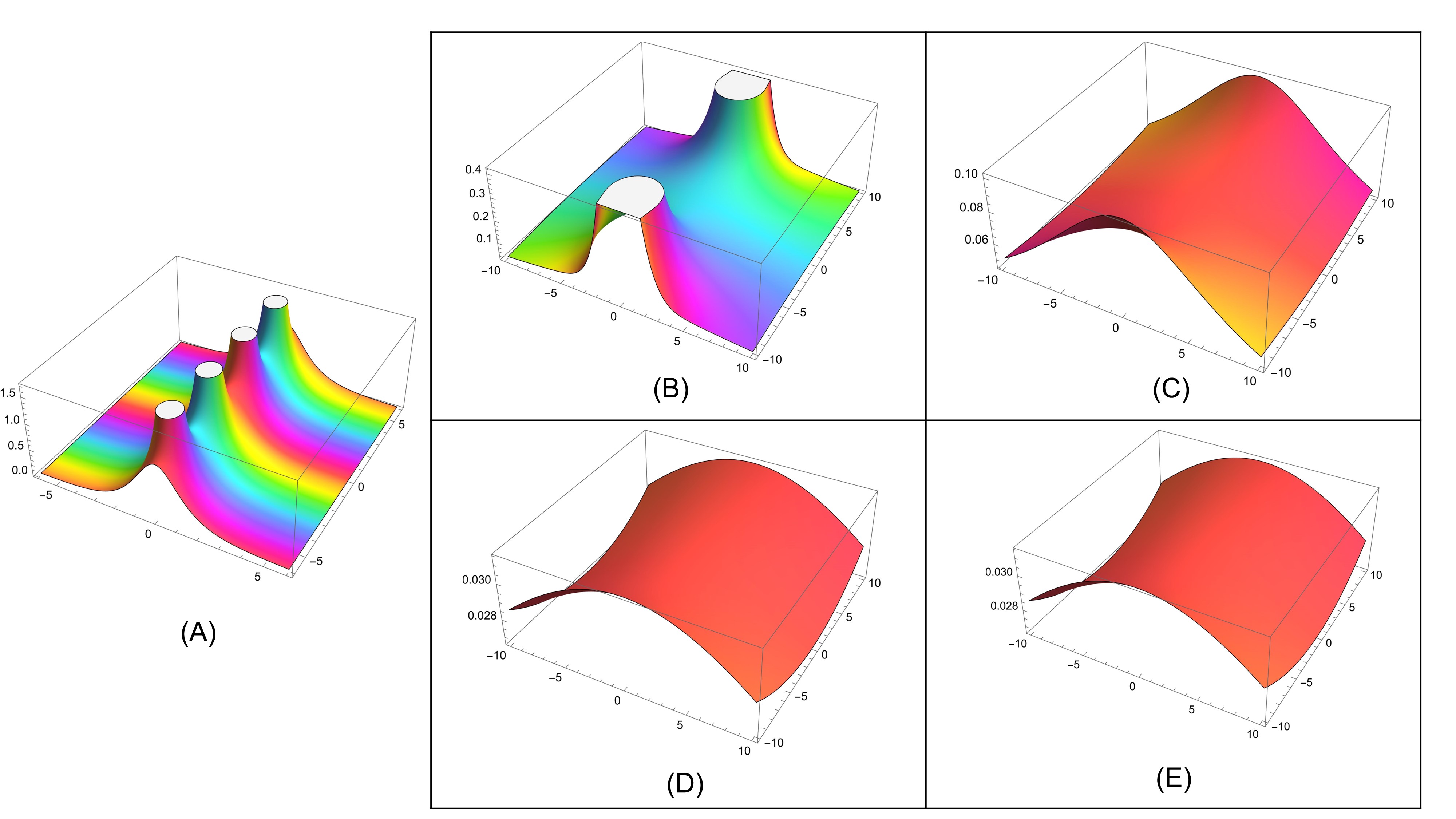}
    \caption{Panel $(A)$ indicates the $\frac{1}{\cosh z}$ function. Panels $(B),(C),(D)$, and $(E)$ correspond to the truncated series expansion at $n=1,4,6,10$, where the $y$ axis is the magnitude of error. The error decreases with the inclusion of more terms, e.g., the inclusion of $10$ terms gives around $0.03$ approximation error.}
    \label{fig:cosh}
\end{figure}

\textbf{Euler Transformation}\\

\underline{Series Summation:} Let $S=\sum_{n=i}^{\infty}\frac{ \binom{n}{i}}{2^n}$, then $S=2$: Multiply by $2$ both sides of the equation. Then, $2S=\sum_{n=i}^{\infty}\frac{ \binom{n}{i}}{2^{n-1}}=\sum_{n=i-1}^{\infty}\frac{ \binom{n+1}{i}}{2^n}$. Subtracting with the original we get $S=\sum_{n=i-1}^{\infty}\frac{ \binom{n}{i-1}}{2^n}$. Apply this $i$ times, we get the familiar result, which is $S=\sum_{n=0}^{\infty} \frac{1}{2^n}=2$. \\

\subsection{Appendix B Boundedness for Laplace transform}
\paragraph{Classical Laplace transform is a bounded linear operator} The linearity of LT is seen via $aF(z)+bF(z)=\mathcal{L}(af(t)+bf(t))$
To yield sufficient decay and integrability condition, the domain for $\hat{f}$ is often taken to be the right half plane $\mathbb{C}_{+}=\{z\in\mathbb{C};Re(z)>0\}$, which is one scenario where Hardy space $H^2$ is commonly defined. We require that $\|\mathcal{L}f\|_{H^2(\mathbb{C}_{+})} \leq C\cdot \|f\|_{L^2([0,\infty))} \text{ for some constant } C$.
\begin{equation*}
    F(z\equiv x+iy)=\mathcal{L}f(z) = \int_{0}^{\infty} f(t)e^{-zt} dt= \int_{0}^{\infty} f(t)e^{-tx}e^{-2\pi iyt} dt = \mathcal{F}\Big(f(t)e^{-tx}H(t) \Big)(y) .
\end{equation*}

We establish $f\in L^2([0,\infty)) $ then $\mathcal{L}f\in H^2(\mathbb{C}_{+})$. We need to show that $\mathcal{L}f$ is holomorphic and the norm  \cite{elliott2012composition} is bounded $\| F\|_{H^2(\mathbb{C}_{+})} = \sup_{x>0} \sqrt{\int_{-\infty}^{\infty} |F(x+iy)|^2dy} < \infty $. The holomorphicity can be argued by for any closed curve $\gamma$ defined in $\mathbb{C}_{+}$ : $\oint_{\gamma} F(Z)dz=\oint_{\gamma}\int_{0}^{\infty}\underbrace{e^{-tz}f(t)}_{\text{holomorphic in } s}dtdz=0$. The boundedness is established by 
\begin{equation*}
    \sup_{x>0}\sqrt{\int_{-\infty}^{\infty} |F(x+iy)|^2dy}
=\sup_{x>0}\sqrt{\int_{0}^{\infty} |f(t)|^2e^{-2xt} dt}=\sqrt{\int_{0}^{\infty} |f(t)|^2 dt}<\infty \quad \forall f\in L^2([0,\infty)),
\end{equation*}
where we used Plancherel theorem,
\begin{equation*}
    \int_{-\infty}^{\infty} |f(t)e^{-tx}H(t)|^2 dt= \int_{-\infty}^{\infty} |F(x+iy)|^2dy 
\end{equation*}
Therefore $\|\mathcal{L}f\|_{H^2(\mathbb{C}_{+})}= \|f\|_{L^2([0,\infty))}$.
\subsection{Appendix C. Laplace transform from two transform rules \label{two-rules}}
\begin{lemma}
If a function $f$ is analytic, and we require two properties
\begin{itemize}
    \item Linearity: $\mathcal{L}[\alpha f+\beta g]=\alpha \mathcal{L}[f]+\beta \mathcal{L}[g]$
    \item Algebraic Derivatives: $\mathcal{L}[f'](s) = s\mathcal{L}[f](s)-f(0)$
\end{itemize}
then Laplace transform $\mathcal{L}[f](s)=\int_0^{\infty} e^{-st}f(t)dt$ is the \textbf{only} transform that satisfy these properties. If we consider signals with finite support, and WLOG consider the $[0,2\pi]$ domain. If we switch the algebraic derivative condition to
\begin{itemize}
    \item Algebraic Derivatives (finite): $\mathcal{L}[f'](s) = s\mathcal{L}[f](s)-f(0)+e^{-2\pi s }f(2\pi)$.
\end{itemize}

Then, linearity and finite Algebraic Derivatives are characteristic of the transform  $\mathcal{L}[f](s)=\int_0^{2\pi} e^{-st}f(t)dt$.
\end{lemma}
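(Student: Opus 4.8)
The plan is to show that the two axioms force the value of the transform on every polynomial, and then to extend to an arbitrary analytic $f$ by density. First I would introduce the auxiliary operator $D[f](s) := \mathcal{L}[f](s) - \int_0^{2\pi} e^{-st}f(t)\,dt$ (and the analogue with $\int_0^{\infty}$ in the half-line setting). The key observation is that $D$ is linear and that $D$ inherits the algebraic-derivative identity in its \emph{homogeneous} form $D[f'](s) = s\,D[f](s)$: the transform $\mathcal{L}$ satisfies the identity by hypothesis, while the integral operator satisfies it by integration by parts, the boundary contribution being exactly $-f(0)$ on $[0,\infty)$ (assuming $e^{-st}f(t)\to 0$) and exactly $-f(0)+e^{-2\pi s}f(2\pi)$ on $[0,2\pi]$. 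This is precisely why the wording of the axiom must change between the two settings, and checking this bookkeeping is the first concrete step.

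Next I would exploit nilpotency of differentiation on polynomials. If $\deg f \le k$, then $f^{(k+1)}\equiv 0$, so iterating the homogeneous identity gives $s^{k+1}D[f](s) = D[f^{(k+1)}](s) = D[0](s) = 0$, where $D[0]=0$ follows from linearity ($D[0]=D[0\cdot f]=0\cdot D[f]$). Hence $D[f](s)=0$ for all $s\neq 0$, i.e.\ $\mathcal{L}$ agrees with the Laplace integral on every polynomial. Equivalently, one can run the induction explicitly: the constant function $f\equiv 1$ forces $\mathcal{L}[1](s)=1/s$ (resp.\ $(1-e^{-2\pi s})/s$ after accounting for the boundary term), and then $f(t)=t^n$ with $f(0)=0$ propagates $\mathcal{L}[t^{n}]$ from $\mathcal{L}[t^{n-1}]$, reproducing $\int t^{n}e^{-st}\,dt$ at each stage. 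Either route shows that the transform is pinned down on the polynomials.

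The final step is the passage to a general analytic $f$. On the compact interval one approximates $f$ uniformly by polynomials $p_m\to f$ (the Taylor partial sums when the radius of convergence exceeds $2\pi$, or Bernstein/Weierstrass polynomials in general) and concludes $\mathcal{L}[f]=\lim_m \mathcal{L}[p_m]=\lim_m\int_0^{2\pi} e^{-st}p_m(t)\,dt=\int_0^{2\pi} e^{-st}f(t)\,dt$. On the half-line one instead writes $f(t)=\sum_n a_n t^n$ and applies $\mathcal{L}$ term by term, the resulting series $\sum_n a_n\, n!/s^{n+1}$ coinciding with the Laplace integral wherever it converges.

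The main obstacle is exactly this last step, and it is where a hidden hypothesis must be acknowledged: bare finitely-additive linearity does not license interchanging $\mathcal{L}$ with an infinite sum or a uniform limit. The clean remedy is to read ``linearity'' as ``bounded/continuous linearity'' with respect to a topology in which polynomials are dense --- uniform convergence on $[0,2\pi]$, which works by the Stone--Weierstrass theorem, in the finite-support case; or, on the half-line, to restrict to functions of at most exponential type so that the monomial expansion converges in an appropriate weighted space and $e^{-st}f(t)\to 0$ for $\operatorname{Re}(s)$ large. For the signals actually transformed in this paper --- splines, finite trigonometric sums, finite polynomial combinations --- only the finite-linearity argument on polynomials is needed, so the characterization is fully effective on the relevant class without any additional continuity assumption.
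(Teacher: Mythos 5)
Your proposal is correct on polynomials and takes a genuinely cleaner route there, while also surfacing an honest gap that the paper's own proof shares. The paper works directly with $\mathcal{L}$: linearity gives $\mathcal{L}[0]=0$, the algebraic-derivative rule applied to $f\equiv 1$ gives $\mathcal{L}[1](s)=\frac{1-e^{-2\pi s}}{s}$, and an explicit downward recursion on $\frac{t^n}{n!}$ produces $\mathcal{L}\left[\frac{t^n}{n!}\right](s)=\frac{1}{n!\,s^{n+1}}\gamma(n+1,2\pi s)$, recognized as the lower incomplete gamma function, after which the Taylor series of $f$ is transformed term by term and the sum is slid inside the integral. Your difference-operator framing $D=\mathcal{L}-\int_0^{2\pi}e^{-s\cdot}\,dt$ together with the homogeneous identity $D[f']=s\,D[f]$ and the nilpotency step $s^{k+1}D[f]=D[f^{(k+1)}]=D[0]=0$ reaches the same conclusion on degree-$\le k$ polynomials in one stroke, with no incomplete-gamma bookkeeping; it also makes transparent exactly why the boundary term in the axiom changes between $[0,\infty)$ and $[0,2\pi]$, since that term is precisely what makes the integration-by-parts identity for the reference integral match the postulated one. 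The two proofs then face the \emph{same} obstruction at the final step: the paper writes $\mathcal{L}\big[\sum_n f^{(n)}(0)t^n/n!\big]=\sum_n f^{(n)}(0)\,\mathcal{L}[t^n/n!]$ with only finite linearity in hand, which is not justified, and you correctly name this as a hidden continuity hypothesis. Your remedy (read linearity as boundedness with respect to uniform convergence on $[0,2\pi]$, so Stone--Weierstrass closes the argument, or restrict to exponential type on the half-line) is exactly the kind of assumption the paper should be stating, and your observation that the paper's numerical pipeline only ever invokes the polynomial/spline case is a fair reason the lemma is still fit for purpose as used. One small thing to tighten: when you pass from $s^{k+1}D[f](s)=0$ to $D[f](s)=0$ you must also dispose of $s=0$ separately (e.g.\ by continuity in $s$ or by noting $s=0$ is outside the half-plane where the transform is defined); the paper sidesteps this by never dividing until after the incomplete gamma is in hand.
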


\begin{proof}
    We prove the finite case and the infinite case follows by extension. Consider $f=0$. The the linearity condition implies that $\mathcal{L}[0]=0$, we then consider $f=1$, then the algebraic derivative condition implies $s\mathcal{L}[1](s)-1+e^{-2\pi s}=\mathcal{L}[0](s)=0$. Then $\mathcal{L}[1](s)=\frac{1-e^{-2\pi s}}{s}$. Then consider the function $f(t)=\frac{t^n}{n!}$, the result can be calculated recursively using the algebraic derivative condition
    \begin{subequations}
        \begin{equation}
            s\mathcal{L}\Big[\frac{t^n}{n!}\Big](s)+e^{-2\pi s}\frac{(2\pi)^n}{n!} = \mathcal{L}\Big[\frac{t^{n-1}}{(n-1)!}\Big](s) ,
        \end{equation}
        \begin{equation}
            \vdots
        \end{equation}
        \begin{equation}
            s\mathcal{L}[t](s)+e^{-2\pi s}(2\pi) = \mathcal{L}[1](s)=\frac{1-e^{-2\pi s}}{s} .
        \end{equation}
    \end{subequations}
    Therefore, 
    \begin{equation}
        s^{n+1}\mathcal{L}\Big[\frac{t^n}{n!}\Big](s) = (1-e^{-2\pi s})-e^{-2\pi s}\sum_{n=1}^n \frac{(2\pi s)^n}{n!}=1-\sum_{n=0}^n \frac{(2\pi s)^n}{n!}=\frac{1}{n!}\gamma(n+1,2\pi s) ,
        \label{incom-gam}
    \end{equation}
    where $\gamma$ is the lower incomplete gamma function and $\gamma(n+1,2\pi s) = \int_{0}^{2\pi s}t^ne^{-t}dt$. By analyticity, we can expand $f$ as an infinite series,
    \begin{equation}
        \begin{split}
            \mathcal{L}[f](s) &= \mathcal{L}\Big[\sum_{n=0}^{\infty}\frac{f^{(n)}(0)t^n}{n!}\Big](s) = \sum_{n=0}^{\infty}f^{(n)}(0)\mathcal{L}\Big[\frac{t^n}{n!}\Big](s)  \quad \text{(Linearity)}\\&=\sum_{n=0}^{\infty}f^{(n)}(0)\int_{0}^{2\pi s}\frac{1}{n!s^{n+1}}t^ne^{-t}dt=\frac{1}{s}\sum_{n=0}^{\infty}f^{(n)}(0)\int_{0}^{2\pi s}\frac{1}{n!}(\frac{t}{s})^ne^{-t}dt \quad \text{(Equation } \ref{incom-gam} \text{)}\\
            &=\frac{1}{s}\int_{0}^{2\pi s}\sum_{n=0}^{\infty}f^{(n)}(0)\frac{(t/s)^n}{n!}e^{-t}dt = \frac{1}{s}\int_0^{2\pi} f(t/s)e^{-t}dt = \int_0^{2\pi}f(t)e^{-st}dt .
        \end{split}
    \end{equation}
\end{proof}

\subsection{Appendix D. Meijer-G special functions and Laplace transform proofs\label{meijerg}}
\begin{theorem}
(Saxena \cite{mathai2006generalized} p.80) The most general form of the Laplace transform is 

\begin{equation}
\begin{split}
&\int_{0}^{\infty}x^{\sigma -1} G_{p,q}^{m,n}\left(wx\bigg|\begin{array}{c}
a_p\\
b_q\end{array}\right) G_{\gamma,\delta}^{\alpha,\beta}\left(\eta x^{k/\rho}\bigg|\begin{array}{c}
c_{\gamma}\\
d_{\delta}\end{array}\right)dx\\&=C_1\cdot G_{\rho\gamma+kq,\rho\delta+k\rho}^{\rho\alpha+kn,\rho\beta+km}
\Bigg(\frac{\eta^{\rho}\rho^{\rho(\gamma-\delta)}}{w^kk^{k(p-q)}}\bigg|\begin{array}{c}
\overbrace{\Delta(\rho,c_1),...,\Delta(\rho,c_{\beta}),\Delta(k,1-b_1-\sigma),...\Delta(k,1-b_m-\sigma)}^{\beta\rho+km};\\
\underbrace{\Delta(\rho,d_1),...,\Delta(\rho,d_{\alpha}),\Delta(k,1-a_1-\sigma),...\Delta(k,1-a_n-\sigma)}_{\rho\alpha+kn};\end{array}\\
&\begin{array}{c}
\overbrace{\Delta(k,1-b_{m+1}-\sigma),...,\Delta(k,1-b_{q}-\sigma),\Delta(\rho,c_{\beta+1}),...,\Delta(\rho,c_{\gamma})}^{\rho(\gamma-\beta)+k(q-m)}\\
\underbrace{\Delta(k,1-a_{n+1}-\sigma),...,\Delta(k,1-a_{p}-\sigma),\Delta(\rho,c_{\alpha+1}),...,\Delta(\rho,c_{\delta})}_{\rho(\delta-\alpha)+k(p-n)}\end{array}\Bigg) ,
\end{split}
\end{equation}
where $C_1$ is some scalar factor and the $\Delta(\cdot,\cdot)$ is a short hand notation for a set of indexing argument in the Meijer-G function. For example, $\Delta(\rho, c_j)=\{\frac{c_j+1}{\rho},\frac{c_j+2}{\rho},...,\frac{c_j+\rho-1}{\rho}\}$, and the indices by default are non-negative integers.
\label{theoremgeneral}
\end{theorem}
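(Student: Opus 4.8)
The plan is to prove the identity by the Mellin-transform (Mellin--Barnes) method, which is the standard route for all integrals of products of $G$-functions; this is essentially Saxena's argument (cf.\ \cite{mathai2006generalized}). Write $\mathcal{M}[\phi](s)=\int_0^\infty x^{s-1}\phi(x)\,dx$ for the Mellin transform. The starting point is that the Mellin--Barnes definition of $G_{p,q}^{m,n}$ quoted earlier in the excerpt says precisely that $G_{p,q}^{m,n}$ is an inverse Mellin transform of the gamma ratio
\begin{equation*}
\Theta(s)=\frac{\Gamma(b_1-s)\cdots\Gamma(b_m-s)\,\Gamma(1-a_1+s)\cdots\Gamma(1-a_n+s)}{\Gamma(1-b_{m+1}+s)\cdots\Gamma(1-b_q+s)\,\Gamma(a_{n+1}-s)\cdots\Gamma(a_p-s)},
\end{equation*}
so that, conversely, the Mellin transform of $x\mapsto G_{p,q}^{m,n}(wx\mid\cdots)$ equals, up to a factor $w^{-s}$ and the orientation of the contour, this ratio of gammas evaluated at (plus or minus) the transform variable.

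First I would insert the contour integral for the \emph{first} $G$-function into the left-hand side and interchange the $s$-contour with the $x$-integral --- legitimate under the usual absolute-convergence hypotheses (restrictions on $\operatorname{Re}(\sigma)$ relative to the poles coming from the $b_j$ and $d_j$; on $|\arg w|$ and $|\arg\eta|$; and balance conditions on $p,q,\gamma,\delta,m,n,\alpha,\beta$ so that admissible contours exist --- these must be stated up front). What remains inside is $\int_0^\infty x^{\sigma+s-1}\,G_{\gamma,\delta}^{\alpha,\beta}(\eta x^{k/\rho}\mid\cdots)\,dx$, i.e.\ the Mellin transform of a $G$-function with a \emph{power argument}. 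The substitution $y=x^{k/\rho}$ turns this into a constant times $\eta^{-(\rho/k)(\sigma+s)}\,\mathcal{M}[G_{\gamma,\delta}^{\alpha,\beta}(\eta\,\cdot\mid\cdots)]\big(\tfrac{\rho}{k}(\sigma+s)\big)$, which by the previous paragraph is again an explicit ratio of gamma functions, now with arguments that are affine in $s$ but carry the factor $\rho/k$.

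At this stage the entire left-hand side is a single $s$-contour integral whose integrand is the product of two gamma ratios --- one with arguments linear in $s$ (from the first $G$-function) and one with arguments linear in $\tfrac{\rho}{k}(\sigma+s)$ (from the second) --- times elementary powers of $w$, $\eta$, $k$, $\rho$. The crucial step, and the most laborious one, is to rewrite every gamma factor whose argument carries the factor $\tfrac{\rho}{k}$ as a product of gammas with arguments linear in $s$, via the Gauss--Legendre multiplication formula $\Gamma(Nz)=(2\pi)^{(1-N)/2}N^{Nz-1/2}\prod_{j=0}^{N-1}\Gamma\!\left(z+\tfrac{j}{N}\right)$ applied with $N=k$ and $N=\rho$. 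This is exactly the mechanism that manufactures the $\Delta(k,\cdot)$ and $\Delta(\rho,\cdot)$ parameter bundles in the statement, and collecting all the scalar debris it leaves behind --- the powers $w^{-\sigma}$, $\eta^\rho$, $k$, $\rho$ and the $(2\pi)$ factors --- assembles the constant $C_1$. Once every gamma argument is affine in $s$, the integrand is, by inspection, the Mellin--Barnes integrand of a new Meijer-$G$ function; sorting its factors into the ``numerator'' and ``denominator'' groups and counting them yields the orders and the argument $\eta^\rho\rho^{\rho(\gamma-\delta)}/(w^k k^{k(p-q)})$ displayed in the theorem, which completes the proof.

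I expect the main obstacle to be purely this bookkeeping: keeping signs and shifts consistent through the change of variables, checking that the $s$-contour can be chosen and deformed so that the multiplication formula applies to each factor separately (this is where the convergence restrictions earn their keep), and then matching the long lists of $\Delta$-parameters and the prefactor against the claimed ones. There is no conceptual difficulty beyond stating the hypotheses correctly; the work is the long, error-prone calculation, which is why the full derivation is deferred to the appendix and ultimately follows the treatment in \cite{mathai2006generalized}.
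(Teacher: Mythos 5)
Your proposal is correct and follows essentially the same Mellin--Barnes strategy as the paper's proof in Appendix \ref{meijerg}: represent one of the two $G$-functions as a contour integral, interchange orders so the remaining $x$-integral becomes a Mellin transform of the other $G$-function (a ratio of gammas), then use the Gauss--Legendre multiplication formula to rewrite the gamma factors carrying the $k/\rho$ scaling into the $\Delta(k,\cdot)$, $\Delta(\rho,\cdot)$ bundles, collecting the leftover powers and $(2\pi)$ factors into $C_1$. The only cosmetic difference is a symmetric one of bookkeeping: you expand the first $G$ (argument $wx$) and Mellin-transform the second (power argument $\eta x^{k/\rho}$), working forward from the left-hand side, whereas the paper expands the second $G$, Mellin-transforms the first, and matches against a $\rho s=s'$-rescaled Mellin--Barnes expansion of the right-hand side.
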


The Gauss's multiplication lemma comes in handy in transforming the scaling on the pole information for the Meijer-G functions, and relating Laplace transform with the formula isn't too far-fetched as the definition for Meijer-G consists of gamma function products.

\begin{lemma}
The Gauss's multiplication formula is
\begin{equation}
  \Gamma(kz) = (2\pi)^{1/2-k/2} k^{kz-1/2}\prod_{j=0}^{k-1}\Gamma\left (z+\frac{j}{k}\right )\ , \quad \forall k=1,2,3,4,\cdots \ .
\end{equation} 
In the $k=2$ case, this degenerates to the Legendre duplication formula.
\end{lemma}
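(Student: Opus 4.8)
The plan is to prove the multiplication formula by the uniqueness half of the Bohr--Mollerup theorem, after a change of variables that recasts it as a comparison of a product of Gamma values against a single Gamma value. First I would substitute $z\mapsto z/k$, so that the claimed identity becomes
\[
\Gamma(z) = (2\pi)^{(1-k)/2}\, k^{z-1/2}\, G(z), \qquad G(z):=\prod_{j=0}^{k-1}\Gamma\!\left(\frac{z+j}{k}\right),
\]
equivalently $\tilde G(z):=k^{z}G(z)=C\,\Gamma(z)$, where $C=\sqrt{k}\,(2\pi)^{(k-1)/2}$ is the constant to be pinned down. It suffices to establish this for $z\in(0,\infty)$; since both sides are meromorphic and agree on an interval, analytic continuation extends it to all admissible $z$.

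Next I would verify the two Bohr--Mollerup hypotheses for $\tilde G$ on $(0,\infty)$. For the functional equation, note
\[
G(z+1)=\prod_{j=0}^{k-1}\Gamma\!\left(\frac{z+1+j}{k}\right)=\prod_{j=1}^{k}\Gamma\!\left(\frac{z+j}{k}\right)=\frac{\Gamma(z/k+1)}{\Gamma(z/k)}\,G(z)=\frac{z}{k}\,G(z),
\]
hence $\tilde G(z+1)=k^{z+1}\cdot\frac{z}{k}G(z)=z\,\tilde G(z)$. For log-convexity: $\Gamma$ is log-convex on $(0,\infty)$, log-convexity is preserved under affine reparametrization of the argument and under finite products, so $G$ is log-convex; multiplying by the log-affine factor $k^{z}$ preserves this. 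Therefore the uniqueness half of Bohr--Mollerup gives $\tilde G(z)=\tilde G(1)\,\Gamma(z)$, i.e. $C=\tilde G(1)=k\,G(1)=k\prod_{m=1}^{k-1}\Gamma(m/k)$ (the $m=k$ factor being $\Gamma(1)=1$).

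It then remains to evaluate the constant. Pairing terms via the reflection formula $\Gamma(x)\Gamma(1-x)=\pi/\sin(\pi x)$ yields
\[
\left(\prod_{m=1}^{k-1}\Gamma(m/k)\right)^{2}=\prod_{m=1}^{k-1}\frac{\pi}{\sin(\pi m/k)}=\frac{\pi^{k-1}}{\prod_{m=1}^{k-1}\sin(\pi m/k)},
\]
and the roots-of-unity identity $\prod_{m=1}^{k-1}\sin(\pi m/k)=k/2^{k-1}$ then gives $\prod_{m=1}^{k-1}\Gamma(m/k)=(2\pi)^{(k-1)/2}/\sqrt{k}$, so $C=k\cdot(2\pi)^{(k-1)/2}/\sqrt{k}=\sqrt{k}\,(2\pi)^{(k-1)/2}$. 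Substituting back into $\tilde G(z)=C\Gamma(z)$ and undoing the substitution $z=kw$ recovers $\Gamma(kw)=(2\pi)^{(1-k)/2}k^{kw-1/2}\prod_{j=0}^{k-1}\Gamma(w+j/k)$, as desired.

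\textbf{Main obstacle.} There is no conceptual difficulty here; the only fiddly point is the evaluation of $C$, which leans on the reflection formula together with the sine-product identity $\prod_{m=1}^{k-1}\sin(\pi m/k)=k/2^{k-1}$ (itself obtained by factoring $X^{k}-1=\prod_{m=0}^{k-1}(X-e^{2\pi i m/k})$, dividing by $X-1$, and letting $X\to 1$). An alternative that avoids the sine product is to fix $C$ by matching the Stirling asymptotics of $\log\tilde G(z)$ and $\log(C\Gamma(z))$ as $z\to+\infty$, but that requires the asymptotic expansion of $\log\Gamma$; the Bohr--Mollerup-plus-reflection route above is the shortest self-contained argument.
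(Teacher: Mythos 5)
Your proof is correct, and it takes a genuinely different route from the paper. The paper's primary argument goes through the Gauss--Euler limit representation $\Gamma(z+k/n)=\lim_{m\to\infty}\sqrt{2\pi}\,(mn/e)^m m^{z+k/n-1/2}/\bigl((nz+k)(nz+k+n)\cdots(nz+k-n+mn)\bigr)$, expanding each factor in the product this way and aggregating with Stirling's formula to reassemble $\Gamma(kz)$; its secondary remark mentions the reflection formula and the sine-product identity, but only as a sketch. Your route instead leans on the Bohr--Mollerup theorem: you isolate the structural claim $\tilde G(z)=k^z\prod_{j=0}^{k-1}\Gamma((z+j)/k)=C\,\Gamma(z)$ by checking the functional equation $\tilde G(z+1)=z\,\tilde G(z)$ and log-convexity, and then determine the single constant $C$ by the reflection formula together with $\prod_{m=1}^{k-1}\sin(\pi m/k)=k/2^{k-1}$. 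What this buys is a cleaner separation of concerns: the uniqueness theorem absorbs all the analytic heavy lifting (no Stirling asymptotics, no limit manipulations), leaving only an elementary constant evaluation. The cost is that you must invoke Bohr--Mollerup as a black box, whereas the paper's Euler-limit argument is self-contained at the level of limits and $m!$ asymptotics. The step in your argument that overlaps with the paper is the constant evaluation via reflection and the cyclotomic sine product — the paper's second sketch is essentially just this piece, with the framing role that Bohr--Mollerup plays for you left implicit. All your calculations — the substitution $z\mapsto z/k$, the telescoping $G(z+1)=(z/k)G(z)$, log-convexity under affine reparametrization and products, $\tilde G(1)=k\prod_{m=1}^{k-1}\Gamma(m/k)$, and the final $C=\sqrt{k}\,(2\pi)^{(k-1)/2}$ — check out, and meromorphic continuation from $(0,\infty)$ is the standard way to pass to general $z$.
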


\begin{proof}
One proof is to expand the gamma function elements using the recurrence relation and applying the Euler form for gamma function and utilizing the Stirling's formula . That is

\begin{equation}
    \Gamma\Big(z+\frac{k}{n}\Big) = \lim_{m\to \infty} \frac{\sqrt{2\pi}(\frac{mn}{e})^mm^{z+k/n-1/2}}{(nz+k)(nz+k+n)\cdots (nz+k-n+mn)}
\end{equation}
And then aggregating the elements and since the limit exits the product of the limit also exists. We can aggregate and get the desired result by inverting the procedure of deriving limit form from gamma functions.

Another proof is to use the Euler reflection formula  $\Gamma(z)\Gamma(1-z)=\frac{\pi}{\sin(\pi z)}, \forall z\not\in \mathbb{Z}$, and the periodicity in the product form to establish sine identities using the roots for the $z^n-1=\prod_{i=0}^{n-1}(z-e^{2\pi ik/n})$ to establish a sine identity $\prod_{k=1}^{n-1}\sin(k\pi/n)=n2^{1-n}$ to use with the reflection formula to get the desired result.

\end{proof}

In similar fashion, an easier calculation facilitates the fractional scaling on the variable $z$ (i.e., $z^{\frac{1}{k}}$).

\begin{theorem}
The scaling formula on the variable $z$ is
\begin{equation}
\begin{split}
   & G_{p,q}^{m,n}\left(z\bigg|\begin{array}{c}
a_p\\
b_q\end{array}\right) =\frac{h^{1+\nu+(p-q)/2}}{(2\pi)^{(h-1)\delta}}G_{hp,hq}^{hm,hn}
\Bigg(\frac{z^h}{h^{h(q-p)}}\bigg|\begin{array}{c}
\overbrace{a_1/h,\cdot\cdot\cdot,(a_1+h-1)/h}^{\Delta(h,a_1)},\cdot\cdot\cdot,\overbrace{a_n/h,\cdot\cdot\cdot,(a_n+h-1)/h}^{\Delta(h,a_n)};\\
\underbrace{b_1/h,\cdot\cdot\cdot,(b_1+h-1)/h}_{\Delta(h,b_1)},\cdot\cdot\cdot,\underbrace{b_m/h,\cdot\cdot\cdot,(b_m+h-1)/h}_{\Delta(h,b_m)};\end{array}\\
\\
&\begin{array}{c}
\overbrace{a_{n+1}/h,\cdot\cdot\cdot,(a_{n+1}+h-1)/h}^{\Delta(h,a_{n+1})},\cdot\cdot\cdot,\overbrace{a_n/h,\cdot\cdot\cdot,(a_n+h-1)/h}^{\Delta(h,a_n)}\\
\underbrace{b_{m+1}/h,\cdot\cdot\cdot,(b_{m+1}+h-1)/h}_{\Delta(h,b_{m+1})},\cdot\cdot\cdot,\underbrace{b_q/h,\cdot\cdot\cdot,(b_q+h-1)/h}_{\Delta(h,b_q)}\end{array}\Bigg), h=1,2,3,\cdots\ .
\end{split}
\end{equation}
\end{theorem}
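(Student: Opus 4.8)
The plan is to argue directly from the Mellin--Barnes contour-integral definition of the Meijer-G function and to reduce the identity to the Gauss multiplication formula for $\Gamma$ proved in the preceding lemma. Write $\nu:=\sum_{j=1}^{q}b_j-\sum_{j=1}^{p}a_j$ and $\delta:=m+n-\tfrac12(p+q)$ for the usual Meijer-G characteristics. I would start from the right-hand side. In its Mellin--Barnes integrand (integration variable $\sigma$, monomial $\big(z^{h}h^{-h(q-p)}\big)^{\sigma}$, measure $\tfrac{1}{2\pi i}\,d\sigma$), each original top index $a_j$ with $1\le j\le n$ contributes the numerator block $\prod_{\ell=0}^{h-1}\Gamma\!\big(1-\tfrac{a_j+\ell}{h}+\sigma\big)$ (the set $\Delta(h,a_j)$), each $b_j$ with $1\le j\le m$ contributes the numerator block $\prod_{\ell=0}^{h-1}\Gamma\!\big(\tfrac{b_j+\ell}{h}-\sigma\big)$, and the remaining $a_j,b_j$ contribute the analogous blocks in the denominator.

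The next step is to collapse each of these $m+n+(q-m)+(p-n)=p+q$ blocks by Gauss multiplication. Reindexed to the form $\prod_{\ell=0}^{h-1}\Gamma(w+\ell/h)=(2\pi)^{(h-1)/2}h^{1/2-hw}\Gamma(hw)$, a typical block becomes
\[
\prod_{\ell=0}^{h-1}\Gamma\!\Big(\tfrac{b_j+\ell}{h}-\sigma\Big)=(2\pi)^{(h-1)/2}\,h^{\,1/2-b_j+h\sigma}\,\Gamma(b_j-h\sigma),
\]
and likewise $\prod_{\ell=0}^{h-1}\Gamma\!\big(1-\tfrac{a_j+\ell}{h}+\sigma\big)=(2\pi)^{(h-1)/2}h^{1/2-(1-a_j)-h\sigma}\Gamma(1-a_j+h\sigma)$, with the denominator blocks giving reciprocals of such expressions. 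Thus each block turns into a single Gamma function whose argument is $h\sigma$ shifted by the original parameter, carrying along a universal $(2\pi)^{\pm(h-1)/2}$ and a power of $h$ that is affine in $\sigma$.

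Then I would substitute $s=h\sigma$, so $ds=h\,d\sigma$ and $\big(z^{h}h^{-h(q-p)}\big)^{\sigma}=z^{s}\,h^{-(q-p)s}$, and transport the $\sigma$-contour to an $s$-contour. After the substitution the collapsed ratio of single Gammas times $z^{s}$ is exactly the Mellin--Barnes integrand of $G_{p,q}^{m,n}(z)$: the $\sigma$-linear powers of $h$ coming from the Gauss collapses combine with the factor $h^{-(q-p)s}$ and cancel identically, so only an overall constant in $s$ survives. Counting that constant --- the $(2\pi)^{(h-1)/2}$ factors from the $m+n$ numerator blocks against those from the $(q-m)+(p-n)$ denominator blocks (net exponent $\tfrac{h-1}{2}(2m+2n-p-q)=(h-1)\delta$), the $h^{1/2-(\cdot)}$ factors against each other, and the leftover $h^{-1}$ from $ds=h\,d\sigma$ --- reproduces precisely the stated prefactor $h^{\,1+\nu+(p-q)/2}(2\pi)^{-(h-1)\delta}$. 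A cleaner-to-organize alternative is induction on $h$: the base case $h=2$ is the Legendre duplication formula, and the composition $z\mapsto z^{h_1}\mapsto z^{h_1h_2}$ propagates the statement.

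The part I expect to be the real obstacle is the contour analysis, not the algebra. One must verify that the map $s=h\sigma$ sends the Mellin--Barnes contour of the right-hand $G_{hp,hq}^{hm,hn}$ onto a bona fide Mellin--Barnes contour for $G_{p,q}^{m,n}(z)$ --- one still separating the poles of the numerator factors $\Gamma(b_j-s)$ from those of $\Gamma(1-a_j+s)$ --- for each of the three admissible contour types, that no pole is crossed while the blocks are being collapsed, and that the existence/convergence hypotheses (conditions on $p,q,m,n$, and the sector of $\arg z$ or the $|z|$-restriction) hold simultaneously on both sides. The exponent bookkeeping for the constant is routine but easy to botch, so I would track the halves in $\delta$ and $\nu$ explicitly.
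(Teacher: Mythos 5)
Your proposal follows essentially the same route as the paper: start from the right-hand side's Mellin--Barnes integral, apply the Gauss multiplication formula to collapse each block of $h$ Gamma factors into a single Gamma, substitute $s=h\sigma$, and tally the resulting powers of $h$ and $(2\pi)$ to recover the prefactor. The only organizational difference is that you collapse the Gamma blocks before rescaling the integration variable whereas the paper rescales first and collapses afterward, and you flag the contour-separation/convergence issue explicitly (which the paper leaves implicit); neither changes the substance of the argument.
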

\begin{proof}
For the sake of disposition, we derive from the RHS.
Substituting $z\to \frac{x}{k}$ in the previous lemma with 
\begin{equation}
  \Gamma\Big(k\frac{x}{k}\Big) = (2\pi)^{1/2-k/2} k^{kz-1/2}\prod_{j=0}^{k-1}\Gamma\Big(\frac{x}{k}+\frac{j}{k}\Big), \quad \forall k=1,2,3,4,\cdots .
  \label{gaussmult}
\end{equation} 
Now we write out the inverse transform of the Meijer-G function via Mellin transform explicitly, i.e.,
\begin{equation}
\begin{split}
    RHS &= \frac{1}{2\pi i}\int_{\mathcal{C}} \frac{\prod_{j=1}^m\prod_{t=0}^{h-1}\Gamma(\frac{b_j+t}{h}+s)\prod_{j=1}^n\prod_{t=0}^{h-1}\Gamma(1-\frac{a_j+t}{h}-s)}{\prod_{j=n+1}^p\prod_{t=0}^{h-1}\Gamma(\frac{a_j+t}{h}+s)\prod_{j=m+1}^q\prod_{t=0}^{h-1}\Gamma(1-\frac{b_j+t}{h}-s)}\left(\frac{z^h}{h^{h(q-p)}}\right)^{-s}ds \\&= \bigg\{\begin{array}{c}
         hs=s' \\
         ds=\frac{1}{h}ds' 
    \end{array}\bigg\} \\
        &=\frac{1}{2\pi i} \int_{\mathcal{C}} \frac{\overbrace{\prod_{j=1}^m\prod_{t=0}^{h-1}\Gamma(\frac{b_j+t}{h}+\frac{s'}{h})}^{A}\overbrace{\prod_{j=1}^n\prod_{t=0}^{h-1}\Gamma(1-\frac{a_j+t}{h}-\frac{s'}{h})}^{B}}{\underbrace{\prod_{j=n+1}^p\prod_{t=0}^{h-1}\Gamma(\frac{a_j+t}{h}+\frac{s'}{h})}_{C}\underbrace{\prod_{j=m+1}^q\prod_{t=0}^{h-1}\Gamma(1-\frac{b_j+t}{h}-\frac{s'}{h})}_{D}}\left(z^{-s'}h^{s'(q-p)}\right)\frac{1}{h}ds' \ .
\end{split}
\label{trhs}
\end{equation}
We examine the $A, B, C, D$ components more closely applying the Gauss multiplication formula (Equation \ref{gaussmult}). By insisting $k=h, b_j+s'=x$, the $A$ component is 
\begin{equation}
    A = \underbrace{\Big(\prod_{j=1}^m\Gamma(b_j+s')\Big)}_{(1)}\underbrace{\Bigg[(2\pi)^{\frac{h-1}{2}}\bigg]^{m}}_{(2)}\underbrace{\bigg[h^{\frac{1}{2}-(b_j+s')}\bigg]^{m}}_{(3)}.
\end{equation}
Similarly, the $B,C,D$ components are (For the $B,D$ components, rearranging is required to get the original Meijer-G formulation. For example, $\prod_{t=0}^{h-1}\Gamma(1-\frac{a_j+t}{h}-\frac{s'}{h})=\prod_{t=0}^{h-1}\Gamma(\frac{1}{h}+\frac{t}{h}-\frac{a_j}{h}-\frac{s'}{h})$, and insisting $x=1-a_j-s'$)
\begin{equation}
    B=\Big(\prod_{j=1}^n\Gamma(1-a_j-s')\Big)\Bigg[(2\pi)^{\frac{h-1}{2}}\bigg]^{n}\bigg[h^{\frac{1}{2}-(1-a_j-s')}\bigg]^{n},
\end{equation}
\begin{equation}
    C=\Big(\prod_{j=n+1}^p\Gamma(a_j+s')\Big)\Bigg[(2\pi)^{\frac{h-1}{2}}\bigg]^{p-n}\bigg[h^{\frac{1}{2}-(a_j+s')}\bigg]^{p-n}, \text{ and}
\end{equation}
\begin{equation}
    D=\Big(\prod_{j=m+1}^q\Gamma(1-b_j-s')\Big)\Bigg[(2\pi)^{\frac{h-1}{2}}\bigg]^{q-m}\bigg[h^{\frac{1}{2}-(1-b_j-s')}\bigg]^{q-m}.
\end{equation}
The first components (1) simply give rise to the usual Meijer-G components, the second factor term is (2) for $A,B,C,D$ combined is 
$$(2)=\Big[(2\pi)^{(h-1)/2}\Big]^{m+n-(p-n)-(q-m)}=\Big[(2\pi)^{(h-1)/2}\Big]^{2m+2n-p-q}=\Big[(2\pi)^{(h-1)/2}\Big]^{2\delta},2\delta = 2m+2n-p-q.$$ 
The  final term is
\begin{equation}
\begin{split}
    (3) &= \frac{\prod_{j=1}^m h^{1/2-(b_j+s')}\prod_{j=1}^n h^{1/2-(1-a_j-s')}}{\prod_{j=n+1}^p h^{1/2-(a_j+s')}\prod_{j=m+1}^{q}h^{1/2-(1-b_j-s')}}\\&=h^{\frac{1}{2}(m-n-(p-n)+(q-m))}h^{\sum_{j=1}^pa_j-\sum_{j=1}^qb_j}h^{s'(-m+n+(p-n)-(q-m))}=h^{\frac{1}{2}(q-p)}h^{\nu}h^{s'(p-q)},
\end{split}
\end{equation}
where $\nu = \sum_{j=1}^pa_j-\sum_{j=1}^qb_j$
Plug back in (equation \ref{trhs})
\begin{equation}
\begin{split}
     RHS&=(\frac{1}{2\pi i})\int_{C}\frac{\prod_{j=1}^m\Gamma(b_j-s')\prod_{j=1}^n\Gamma(1-a_j+s')}{\prod_{j=m+1}^q\Gamma(1-b_{j}+s')\prod_{j=n+1}^q\Gamma(a_j-s')}(2\pi)^{(h-1)\delta }h^{\nu-1+\frac{1}{2}(q-p)}z^{-s'}ds'\\
     &=(2\pi)^{(h-1)\delta }h^{\nu-1+\frac{1}{2}(q-p)}G_{p,q}^{m,n}\left(z\bigg|\begin{array}{c}
a_p\\
b_q\end{array}\right)
\end{split}
\end{equation}
\end{proof}

Analogously, we can prove the a general form of Meijer-G function in Theorem \ref{theoremgeneral}, which demonstrates the closeness of Meijer function under integral transforms. Before that, we shortly show the calculation for the laplace-like transform (which can be identified with the forward and inverse Mellin transform)
\begin{lemma}
    \begin{equation}
        \int_0^{\infty}x^{s-1} G_{p,q}^{m,n}\left(wx\bigg|\begin{array}{c}a_p\\
b_q\end{array}\right)dx = w^{-s} \frac{\Gamma(b_1+s)...\Gamma(b_m+s)\Gamma(1-a_1-s)...\Gamma(1-a_n-s)}{\Gamma(1-b_{m+1}-s)...\Gamma(1-b_q-s)\Gamma(a_{n+1}+s)...\Gamma(a_{p}+s)} .
    \end{equation}
\end{lemma}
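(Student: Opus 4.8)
The identity to establish is precisely the Mellin transform of the Meijer-$G$ function, so the plan is to feed the defining Mellin--Barnes contour integral for $G$ into the outer $x$-integral and then read off the answer through Mellin inversion. First I would strip off the scale $w$: the substitution $y=wx$ gives
\begin{equation}
\int_{0}^{\infty}x^{s-1}G_{p,q}^{m,n}\left(wx\bigg|\begin{array}{c}a_p\\b_q\end{array}\right)dx=w^{-s}\int_{0}^{\infty}y^{s-1}G_{p,q}^{m,n}\left(y\bigg|\begin{array}{c}a_p\\b_q\end{array}\right)dy,
\end{equation}
so it suffices to treat the case $w=1$ and reinstate the factor $w^{-s}$ at the end.

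Next I would write $G$ through its definition, renaming the internal contour variable to $\zeta$ (the letter $s$ is already serving as the external Mellin variable):
\begin{equation}
G_{p,q}^{m,n}\left(y\bigg|\begin{array}{c}a_p\\b_q\end{array}\right)=\frac{1}{2\pi i}\int_{C}\mathcal{G}(\zeta)\,y^{\zeta}\,d\zeta,\qquad \mathcal{G}(\zeta)=\frac{\prod_{j=1}^{m}\Gamma(b_j-\zeta)\prod_{j=1}^{n}\Gamma(1-a_j+\zeta)}{\prod_{j=m+1}^{q}\Gamma(1-b_j+\zeta)\prod_{j=n+1}^{p}\Gamma(a_j-\zeta)}.
\end{equation}
This exhibits $G$ as the inverse Mellin transform of $\mathcal{G}$ along a vertical contour $C$ chosen to keep the poles of the factors $\Gamma(b_j-\zeta)$ to its right and the poles of the factors $\Gamma(1-a_j+\zeta)$ to its left. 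Mellin inversion (equivalently: a Fubini interchange of $\int_0^{\infty}dy$ with $\int_{C}d\zeta$, followed by the observation that $\int_0^{\infty}y^{s-1+\zeta}\,dy$ plays the role of the reproducing kernel of the Mellin pair on $C$) then yields $\int_0^{\infty}y^{s-1}G(y)\,dy=\mathcal{G}(-s)$, that is,
\begin{equation}
\int_{0}^{\infty}y^{s-1}G_{p,q}^{m,n}\left(y\bigg|\begin{array}{c}a_p\\b_q\end{array}\right)dy=\frac{\Gamma(b_1+s)\cdots\Gamma(b_m+s)\,\Gamma(1-a_1-s)\cdots\Gamma(1-a_n-s)}{\Gamma(1-b_{m+1}-s)\cdots\Gamma(1-b_q-s)\,\Gamma(a_{n+1}+s)\cdots\Gamma(a_p+s)},
\end{equation}
and multiplying by $w^{-s}$ gives the claim.

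The hard part is not the symbol manipulation but the analytic bookkeeping needed to license the inversion step. Concretely, one must verify: (i) that the contour $C$ can genuinely be drawn to separate the two families of $\Gamma$-poles, which is a constraint on the parameters $a_p,b_q$; (ii) that the point $-s$ lies on such a contour, i.e. $\operatorname{Re}(s)$ sits in the open vertical strip $-\min_{1\le j\le m}\operatorname{Re}(b_j)<\operatorname{Re}(s)<1-\max_{1\le j\le n}\operatorname{Re}(a_j)$; and (iii) that the outer integral converges absolutely, which is governed by the asymptotics of $G$ as $y\to0^{+}$ and $y\to\infty$ --- controlled by the index $c^{\ast}=m+n-\tfrac12(p+q)$ and, in the balanced case $p=q$, by the exponent near $y=1$. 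Under these standing hypotheses a dominated-convergence argument validates the interchange and the appeal to Mellin inversion, completing the proof; these are exactly the conditions already assumed for the parent formula in Theorem~\ref{theoremgeneral}, so nothing beyond the usual Meijer-$G$ existence conditions is required.
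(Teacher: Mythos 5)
Your proposal is correct and proceeds exactly as the paper does: it recognizes the left-hand side as the Mellin transform of $G$, invokes the fact that $G$ is \emph{defined} as the inverse Mellin transform of the stated gamma-quotient, and pulls out $w^{-s}$ by a scale substitution. The only difference is that you spell out the Fubini/contour-separation/convergence caveats, which the paper glosses over with ``by definition of the Meijer-$G$ function and the Mellin transform.''
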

\begin{proof}
    This is by definition of the Meijer-G function and the Mellin transform definition. That is,
\begin{subequations}
\begin{equation}
  \text{Forward transform :} F(s) = \left(\mathcal{M}\right)(s) = \int_0^{\infty} x^{s-1}f(x)dx 
\end{equation}    
\begin{equation}
 \text{Inverse transform :} f(x)=\left(\mathcal{M}^{-1}F\right)(x)=\int_{C}G_{p,q}^{m,n}\left(\begin{array}{c}
a_p\\
b_q
\end{array}\bigg| x\right)x^{-s}ds
\end{equation}
\begin{equation}
    G_{p,q}^{m,n}\left(\begin{array}{c}
a_p\\
b_q
\end{array}\bigg| x\right)=\frac{1}{2\pi i}\int_{C}\frac{\Gamma(b_1+s)...\Gamma(b_m+s)\Gamma(1-a_1-s)...\Gamma(1-a_n-s)}{\Gamma(1-b_{m+1}-s)...\Gamma(1-b_q-s)\Gamma(a_{n+1}+s)...\Gamma(a_{p}+s)}x^{-s}ds
\end{equation}
\end{subequations}
where the contour integration $C$ integrates from $c-i\infty$ to $c+i\infty$, and the $w^{-s}$ naturally arises as a scaling factor of the integration.

\end{proof}
\begin{proof}
    We return to the theorem. We start with the left hand side, 
\begin{equation}
\begin{split}
    &\int_{0}^{\infty}x^{\sigma -1} G_{p,q}^{m,n}\left(wx\bigg|\begin{array}{c}
a_p\\
b_q\end{array}\right) G_{\gamma,\delta}^{\alpha,\beta}\left(\eta x^{k/\rho}\bigg|\begin{array}{c}
c_{\gamma}\\
d_{\delta}\end{array}\right)dx\\&=\int_{0}^{\infty}x^{\sigma -1} G_{p,q}^{m,n}\left(wx\bigg|\begin{array}{c}
a_p\\
b_q\end{array}\right) \frac{1}{2\pi i}\int_{C}\frac{\prod_{j=1}^{\alpha}\Gamma(d_j+s)\prod_{j=1}^{\beta}\Gamma(1-c_j-s)}{\prod_{j=\beta+1}^{\gamma}\Gamma(c_j+s)\prod_{j=\alpha+1}^{\delta}\Gamma(1-d_j-s)}\big(\eta x^{\frac{k}{\rho}}\big)^{-s}dsdx\\
&=\frac{1}{2\pi i}\int_{C}\eta^{-s}\frac{\prod_{j=1}^{\alpha}\Gamma(d_j+s)\prod_{j=1}^{\beta}\Gamma(1-c_j-s)}{\prod_{j=\beta+1}^{\gamma}\Gamma(c_j+s)\prod_{j=\alpha+1}^{\delta}\Gamma(1-d_j-s)}\left(\int_0^{\infty}x^{\sigma -1} G_{p,q}^{m,n}\left(wx\bigg|\begin{array}{c}
a_p\\
b_q\end{array}\right)x^{-s\frac{k}{\rho}}dx\right) ds .
\end{split}
\end{equation}

Applying the previous lemma for the laplace like transform
\begin{equation}
    \int_0^{\infty}x^{\sigma -1} G_{p,q}^{m,n}\left(wx\bigg|\begin{array}{c}
a_p\\
b_q\end{array}\right)x^{-s\frac{k}{\rho}}dx=w^{-\sigma+s\frac{k}{\rho}} \frac{\prod_{j=1}^m\Gamma(b_j+s)\prod_{j=1}^n\Gamma(1-a_j-s)}{\prod_{j=m+1}^q\Gamma(1-b_j-s)\prod_{j=n+1}^p\Gamma(a_{j}+s)}\bigg|_{s=\sigma-\frac{ks}{\rho}} .
\end{equation}
Summarizing the LHS contains certain incompatible gamma function that needs further transformation. That is,
\begin{equation}
\resizebox{\hsize}{!}{$ LHS =\frac{1}{2\pi i}w^{-\sigma} \int_{C}\frac{\overbrace{\prod_{j=1}^{\alpha}\Gamma(d_j+s)}^{O}\overbrace{\prod_{j=1}^{\beta}\Gamma(1-c_j-s)}^{P}}{\underbrace{\prod_{j=\beta+1}^{\gamma}\Gamma(c_j+s)}_{Q}\underbrace{\prod_{j=\alpha+1}^{\delta}\Gamma(1-d_j-s)}_{R}}\frac{\overbrace{\prod_{j=1}^m\Gamma(b_j+\sigma-\frac{ks}{\rho})}^{S}\overbrace{\prod_{j=1}^n\Gamma(1-a_j-(\sigma-\frac{ks}{\rho}))}^{T}}{\underbrace{\prod_{j=m+1}^q\Gamma(1-b_j-(\sigma-\frac{ks}{\rho}))}_{U}\underbrace{\prod_{j=n+1}^p\Gamma(a_{j}+\sigma-\frac{ks}{\rho})}_{V}}\eta^{-s}w^{\frac{ks}{\rho}}ds$}
\label{fulllhs}
\end{equation}

On the right hand side, the equation is
\begin{equation}
\begin{split}
      RHS &= \frac{1}{2\pi i}\int_{C}\frac{A(s)B(s)}{C(s)D(s)}\big(\frac{\eta^{\rho}\rho^{\rho(\gamma-\delta)}}{w^kk^{k(p-q)}}\big)^{-s}ds= \bigg\{\begin{array}{c}
         \rho s=s' \\
         ds=\frac{1}{\rho}ds'
    \end{array}\bigg\}\\&= \frac{1}{2\pi i}\int_{C}\frac{A(\frac{s'}{\rho})B(\frac{s'}{\rho})}{C(\frac{s'}{\rho})D(\frac{s'}{\rho})}\eta^{-s'}\rho^{-(\gamma-\delta)s'}w^{\frac{ks'}{\rho}}k^{\frac{ks'(p-q)}{\rho}}\frac{1}{\rho}ds',
\end{split}
\label{Therhstocancel}
\end{equation}
where the variable exchange formula (scaling) invokes the utilization of Gauss multiplication formula, which is an effective transformation for the pole landscape. Next, we analyze the components ($A,B,C,D$) separately. Let's start with the $A$ part $(b_j+s)$:
\begin{subequations}
    \begin{equation}
A(s)=\prod_{j=1}^{\alpha}\prod_{t=0}^{\rho-1}\Gamma(\frac{d_j+t}{\rho}+s)
\prod_{j=1}^{n}\prod_{t=0}^{k-1}\Gamma(\frac{1-a_j-\sigma+t}{k}+s).
\end{equation}
\begin{equation}
\begin{split}
     A\Big(\frac{s'}{\rho}\Big)&=\prod_{j=1}^{\alpha}\prod_{t=0}^{\rho-1}\Gamma(\frac{d_j+t}{\rho}+\frac{s'}{\rho})
\prod_{j=1}^{n}\prod_{t=0}^{k-1}\Gamma(\frac{1-a_j-\sigma+t}{k}+\frac{s'}{\rho})\\
&=\Big(\prod_{j=1}^{\alpha}\Gamma(d_j+s')\Big) (2\pi)^{\frac{\alpha(\rho-1)}{2}}\rho^{\alpha(\frac{1}{2}-(d_j+s'))}\Big(\prod_{j=1}^{n}\Gamma(1-a_j-\sigma+\frac{ks'}{\rho})\Big)(2\pi)^{\frac{n(k-1)}{2}}k^{n(\frac{1}{2}-(1-a_j-\sigma+\frac{ks'}{\rho}))}\\
&=O\cdot T\cdot (2\pi)^{\frac{\alpha(\rho-1)}{2}}\rho^{\alpha(\frac{1}{2}-(d_j+s'))} \cdot (2\pi)^{\frac{n(k-1)}{2}}k^{n(\frac{1}{2}-(1-a_j-\sigma+\frac{ks'}{\rho}))},
\end{split}
\end{equation}
\end{subequations}
where in the second step we apply the multiplication formula. Similarly, the $B, C, D$ components can be computed. Next, we examine the $B$ part $(1-a_j-s)$
\begin{subequations}
    \begin{equation}
    \begin{split}
        B(s) &= \prod_{j=1}^{\beta}\prod_{t=0}^{\rho-1}\Gamma(1-\frac{c_j+t}{\rho}-s)
\prod_{j=1}^{m}\prod_{t=0}^{k-1}\Gamma(1-\frac{1-b_j-\sigma+t}{k}-s)\\ & = \prod_{j=1}^{\beta}\prod_{t=0}^{\rho-1}\Gamma(\frac{1-c_j+t}{\rho}-s)
\prod_{j=1}^{m}\prod_{t=0}^{k-1}\Gamma(\frac{b_j+\sigma+t}{k}-s)
    \end{split}
    \end{equation}
\begin{equation}
\begin{split}
     B\Big(\frac{s'}{\rho}\Big)&=\prod_{j=1}^{\beta}\prod_{t=0}^{\rho-1}\Gamma(\frac{1-c_j+t}{\rho}-\frac{s'}{\rho})
\prod_{j=1}^{m}\prod_{t=0}^{k-1}\Gamma(\frac{b_j+\sigma+t}{k}-\frac{s'}{\rho})\\
&=\Big(\prod_{j=1}^{\beta}\Gamma(1-c_j-s')\Big) (2\pi)^{\frac{\beta(\rho-1)}{2}}\rho^{\beta(\frac{1}{2}-(1-c_j-s'))}\Big(\prod_{j=1}^{m}\Gamma(b_j+\sigma-\frac{ks'}{\rho})\Big)(2\pi)^{\frac{m(k-1)}{2}}k^{m(\frac{1}{2}-(b_j+\sigma-\frac{ks'}{\rho}))}\\
&=P\cdot S\cdot (2\pi)^{\frac{\beta(\rho-1)}{2}}\rho^{\beta(\frac{1}{2}-(1-c_j-s'))} \cdot (2\pi)^{\frac{m(k-1)}{2}}k^{m(\frac{1}{2}-(b_j+\sigma-\frac{ks'}{\rho}))}
\end{split}
\end{equation}
\end{subequations}
The $C$ sector $(a_j+s)$ is
\begin{subequations}
    \begin{equation}
        C(s) =\prod_{j=m+1}^{q}\prod_{t=0}^{k-1}\Gamma(\frac{1-b_j-\sigma+t}{k}+s) \prod_{j=\beta+1}^{\gamma}\prod_{t=0}^{\rho-1}\Gamma(\frac{c_j+t}{\rho}+s)
   \end{equation}
\begin{equation}
\begin{split}
     C\Big(\frac{s'}{\rho}\Big)&=\prod_{j=m+1}^{q}\prod_{t=0}^{k-1}\Gamma(\frac{1-b_j-\sigma+t}{k}+\frac{s'}{\rho}) \prod_{j=\beta+1}^{\gamma}\prod_{t=0}^{\rho-1}\Gamma(\frac{c_j+t}{\rho}+\frac{s'}{\rho})\\
&=\Big(\prod_{j=m+1}^{q}\Gamma(1-b_j-\sigma+\frac{ks'}{\rho}) \Big) (2\pi)^{\frac{(q-m)(k-1)}{2}}k^{(q-m)(\frac{1}{2}-(1-b_j-\sigma+\frac{ks'}{\rho}))}\\&\Big(\prod_{j=\beta+1}^{\gamma}\Gamma(c_j+s')\Big)(2\pi)^{\frac{(\gamma-\beta)(\rho-1)}{2}}\rho^{(\gamma-\beta)(\frac{1}{2}-(c_j+s'))}\\
&=U\cdot Q\cdot (2\pi)^{\frac{(q-m)(k-1)}{2}}k^{(q-m)(\frac{1}{2}-(1-b_j-\sigma+\frac{ks'}{\rho}))}\cdot (2\pi)^{\frac{(\gamma-\beta)(\rho-1)}{2}}\rho^{(\gamma-\beta)(\frac{1}{2}-(c_j+s'))} .
\end{split}
\end{equation}
\end{subequations}

The $D$ sector $(1-b_j-s)$ is
\begin{subequations}
    \begin{equation}
        D(s) =\prod_{j=n+1}^{p}\prod_{t=0}^{k-1}\Gamma(1-\frac{1-a_j-\sigma+t}{k}-s) \prod_{j=\alpha+1}^{\delta}\prod_{t=0}^{\rho-1}\Gamma(1-\frac{d_j+t}{\rho}-s), 
   \end{equation}
\begin{equation}
\begin{split}
     D\Big(\frac{s'}{\rho}\Big )&=\prod_{j=n+1}^{p}\prod_{t=0}^{k-1}\Gamma(1-\frac{1-a_j-\sigma+t}{k}-\frac{s'}{\rho}) \prod_{j=\alpha+1}^{\delta}\prod_{t=0}^{\rho-1}\Gamma(1-\frac{d_j+t}{\rho}-\frac{s'}{\rho})\\
     &=\prod_{j=n+1}^{p}\prod_{t=0}^{k-1}\Gamma(\frac{a_j+\sigma+t}{k}-\frac{s'}{\rho}) \prod_{j=\alpha+1}^{\delta}\prod_{t=0}^{\rho-1}\Gamma(\frac{1-d_j+t}{\rho}-\frac{s'}{\rho})\\
&=\Big(\prod_{j=n+1}^{p}\Gamma(a_j+\sigma-\frac{ks'}{\rho}) \Big) (2\pi)^{\frac{(p-n)(k-1)}{2}}k^{(p-n)(\frac{1}{2}-(a_j+\sigma-\frac{ks'}{\rho}))}\\&\Big(\prod_{j=\alpha+1}^{\delta}\Gamma(1-d_j-s')\Big)(2\pi)^{\frac{(\delta-\alpha)(\rho-1)}{2}}\rho^{(\delta-\alpha)(\frac{1}{2}-(1-d_j-s'))}\\
&=V\cdot R\cdot (2\pi)^{\frac{(p-n)(k-1)}{2}}k^{(p-n)(\frac{1}{2}-(a_j+\sigma-\frac{ks'}{\rho}))}\cdot (2\pi)^{\frac{(\delta-\alpha)(\rho-1)}{2}}\rho^{(\delta-\alpha)(\frac{1}{2}-(1-d_j-s'))} .
\end{split}
\end{equation}
\end{subequations}
Notice that the $O,P,Q,R,S,T,U,V$ factors correspond to their counterparts in (Equation \ref{fulllhs}). We examine the $s'$ dependence in $\frac{A(\frac{s'}{\rho})B(\frac{s'}{\rho})}{C(\frac{s'}{\rho})D(\frac{s'}{\rho})}=C_2\cdot F(s')$, where $C_2$ is some scalar independent of $s'$. We single out the $s'$ terms
\begin{equation}
\begin{split}
      \frac{A(\frac{s'}{\rho})B(\frac{s'}{\rho})}{C(\frac{s'}{\rho})D(\frac{s'}{\rho})}&\sim\frac{\rho^{\alpha(\frac{1}{2}-(d_j+s'))}k^{n(\frac{1}{2}-(1-a_j-\sigma+\frac{ks'}{\rho}))}\rho^{\beta(\frac{1}{2}-(1-c_j-s'))}k^{m(\frac{1}{2}-(b_j+\sigma-\frac{ks'}{\rho}))}}{k^{(q-m)(\frac{1}{2}-(1-b_j-\sigma+\frac{ks'}{\rho}))}\rho^{(\gamma-\beta)(\frac{1}{2}-(c_j+s'))}k^{(p-n)(\frac{1}{2}-(a_j+\sigma-\frac{ks'}{\rho}))}\rho^{(\delta-\alpha)(\frac{1}{2}-(1-d_j-s'))}}\\&\sim\underbrace{\rho^{-\alpha s'+\beta s'-(\gamma-\beta)(-s')-(\delta-\alpha)s'}}_{\rho^{(\gamma-\delta)s'}}\underbrace{k^{-\frac{nks'}{\rho}+\frac{mks'}{\rho}-(q-m)(-\frac{ks'}{\rho})-(p-n)\frac{ks'}{\rho}}}_{k^{(q-p)\frac{ks'}{\rho}}}=F(s'),
\end{split}
\label{finalsdep}
\end{equation}
which exactly cancels out the $\rho,k$ in the exponential $s'$ term in Equation \ref{Therhstocancel}. Comparing equations (\ref{fulllhs}, \ref{Therhstocancel}, \ref{finalsdep}), we establish that $C_1=\frac{LHS}{RHS}$, the remaining scalar factor $C_1$ has three components:
\begin{itemize}
    \item The original $w^{-\sigma}$ term, and a $\rho$ term in the original transformation left out.
    \item The $2\pi$ terms
    \item The remaining terms in the $\rho,k$ exponentials.
\end{itemize}
The $2\pi$ terms are
\begin{equation}
\begin{split}
\frac{A(\frac{s'}{\rho})B(\frac{s'}{\rho})}{C(\frac{s'}{\rho})D(\frac{s'}{\rho})}\sim C_{12}&=\frac{(2\pi)^{\frac{\alpha(\rho-1)}{2}}(2\pi)^{\frac{n(k-1)}{2}}(2\pi)^{\frac{\beta(\rho-1)}{2}}(2\pi)^{\frac{m(k-1)}{2}}}{(2\pi)^{\frac{(q-m)(k-1)}{2}}(2\pi)^{\frac{(\gamma-\beta)(\rho-1)}{2}}(2\pi)^{\frac{(p-n)(k-1)}{2}}(2\pi)^{\frac{(\delta-\alpha)(\rho-1)}{2}}}\\&=(2\pi)^{\frac{\rho-1}{2}(2\alpha+2\beta-\gamma-\delta)}(2\pi)^{\frac{k-1}{2}(2m+2n-p-q)} .
\end{split}
\end{equation}

The remaining factor in the $\rho$ exponentials. Here implicitly, we are using the fact that $d_j=\frac{\sum_{j=1}^{\alpha}d_j}{\alpha}$ (similarly for $c_j$) for notation simplicity
\begin{equation}
    \frac{\rho^{\alpha(\frac{1}{2}-d_j)}\rho^{\beta(\frac{1}{2}-1+c_j)}}{\rho^{(\gamma-\beta)(\frac{1}{2}-c_j)}\rho^{(\delta-\alpha)(\frac{1}{2}-1+d_j)}}=\underbrace{\rho^{\frac{1}{2}\Big(\alpha-\beta-(\gamma-\beta)+(\delta-\alpha)\Big)}}_{\rho^{\frac{1}{2}(\delta-\gamma)}}\underbrace{\rho^{-\sum_{j=1}^{\alpha}d_j+\sum_{j=1}^{\beta}c_j-\sum_{j=\beta+1}^{\gamma}(-c_j)-\sum_{j=\alpha+1}^{\delta}d_j}}_{\rho^{\sum_{j=1}^{\gamma}c_j-\sum_{j=1}^{\delta}d_j}} .
\end{equation}

Similarly, the $k$ exponentials are
\begin{equation}
\begin{split}
    &\frac{k^{n(\frac{1}{2}-(1-a_j-\sigma))}k^{m(\frac{1}{2}-(b_j+\sigma))}}{k^{(q-m)(\frac{1}{2}-(1-b_j-\sigma))}k^{(p-n)(\frac{1}{2}-(a_j+\sigma))}}\\&=\underbrace{k^{\frac{1}{2}\Big(-n+\beta+(q-m)-(p-n)\Big)}}_{k^{\frac{1}{2}(q-p)}}\underbrace{k^{\sum_{j=1}^na_j-\sum_{j=1}^mb_j-\sum_{j=m+1}^qb_j+\sum_{j=n+1}^pa_j}}_{k^{\sum_{j=1}^pa_j-\sum_{j=1}^qb_j}}\underbrace{k^{n\sigma-m\sigma-(q-m)\sigma+(p-n)\sigma}}_{k^{\sigma(p-q)}} .
\end{split}
\end{equation}

Therefore, the remaining factor for $C_1$ is
\begin{equation}
    C_1=w^{-\sigma}\rho^{1+\frac{1}{2}(\delta-\gamma)+\sum_{j=1}^{\gamma}c_j-\sum_{j=1}^{\delta}d_j} k^{\sigma(p-q)+\frac{1}{2}(q-p)+\sum_{j=1}^pa_j-\sum_{j=1}^qb_j}(2\pi)^{\frac{\rho-1}{2}(2\alpha+2\beta-\gamma-\delta)}(2\pi)^{\frac{k-1}{2}(2m+2n-p-q)}.
\end{equation}
\end{proof}

\subsubsection{Selected calculations of the Laplace transform from Meijer-G functions \label{laplace_calmeijerg}}
\begin{theorem}
It follows from the previous theorem \ref{theoremgeneral}
\begin{equation}
    \int_0^{\infty} e^{-s\cdot x}G_{p,q}^{m,n}\left(\begin{array}{c}
a_p\\
b_q
\end{array}\bigg| wx^2\right)dx = \frac{1}{\sqrt{\pi}s}G_{p+2,q}^{m,n+2}\left(\begin{array}{c}
0,\frac{1}{2},a_p\\
b_q
\end{array}\bigg| \frac{4w}{s^2}\right) .
\label{m-laplace}
\end{equation}
\end{theorem}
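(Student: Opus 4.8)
The plan is to prove \eqref{m-laplace} directly from the Mellin--Barnes representation of the Meijer-$G$ function, using the Legendre duplication formula (the $k=2$ instance of the Gauss multiplication lemma stated above) to manufacture the two extra upper parameters $0,\tfrac12$. First I would write
$$G_{p,q}^{m,n}\!\left(wx^{2}\bigg|\begin{array}{c}a_p\\ b_q\end{array}\right)=\frac{1}{2\pi i}\int_{L}\Theta(\xi)\,(wx^{2})^{-\xi}\,d\xi,\qquad \Theta(\xi)=\frac{\prod_{j=1}^{m}\Gamma(b_j+\xi)\prod_{j=1}^{n}\Gamma(1-a_j-\xi)}{\prod_{j=m+1}^{q}\Gamma(1-b_j-\xi)\prod_{j=n+1}^{p}\Gamma(a_j+\xi)},$$
with $L$ a suitable vertical contour, substitute this into the left side of \eqref{m-laplace}, and (in a parameter range where the double integral converges absolutely) interchange the two integrations by Fubini. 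The inner integral is elementary, $\int_{0}^{\infty}e^{-sx}x^{-2\xi}\,dx=s^{2\xi-1}\Gamma(1-2\xi)$, so the left side becomes $\tfrac{1}{2\pi i}\int_{L}\Theta(\xi)\,w^{-\xi}s^{2\xi-1}\Gamma(1-2\xi)\,d\xi$.

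Next I would apply $\Gamma(1-2\xi)=\dfrac{2^{-2\xi}}{\sqrt{\pi}}\,\Gamma\!\big(\tfrac12-\xi\big)\Gamma(1-\xi)$, which is the $k=2$ case of the Gauss multiplication lemma (equivalently $\Gamma(2z)=2^{2z-1}\pi^{-1/2}\Gamma(z)\Gamma(z+\tfrac12)$ with $z=\tfrac12-\xi$). The integrand then becomes $\dfrac{1}{\sqrt{\pi}\,s}\,\Theta(\xi)\,\Gamma\!\big(\tfrac12-\xi\big)\Gamma(1-\xi)\,\big(\tfrac{4w}{s^{2}}\big)^{-\xi}$, using $w^{-\xi}s^{2\xi}2^{-2\xi}=(4w/s^{2})^{-\xi}$. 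The two new factors $\Gamma(1-\xi)=\Gamma(1-0-\xi)$ and $\Gamma(\tfrac12-\xi)=\Gamma(1-\tfrac12-\xi)$ are precisely of the shape $\Gamma(1-a_j-\xi)$ with $a_j\in\{0,\tfrac12\}$, hence they augment the \emph{numerator} of $\Theta$; by the definition of the Meijer-$G$ function this means $n\mapsto n+2$, $p\mapsto p+2$ with new upper parameters $0,\tfrac12$, while $m,q$ are unchanged. So the surviving Mellin--Barnes integral is exactly $G_{p+2,q}^{m,n+2}\!\big(\tfrac{4w}{s^{2}}\,\big|\,0,\tfrac12,a_p;\,b_q\big)$, and pulling out $\tfrac{1}{\sqrt{\pi}\,s}$ yields \eqref{m-laplace}. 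I would close by remarking that the identity also drops out of Theorem~\ref{theoremgeneral} on specializing $\sigma=1$, $k=2$, $\rho=1$ and taking the first $G$ to be the exponential kernel $e^{-sx}=G_{0,1}^{1,0}(sx\,|\,;0)$ from \eqref{meijergexp}, which serves as a cross-check.

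The hard part will not be the algebra but the analytic bookkeeping: one must place $L$ so that the poles of $\prod\Gamma(b_j+\xi)$ lie to its left and those of $\prod\Gamma(1-a_j-\xi)$ together with $\Gamma(1-2\xi)$ lie to its right, and verify absolute convergence of $\int_0^\infty e^{-sx}x^{-2\xi}dx$ uniformly enough along $L$ to justify Fubini --- which forces a temporary restriction such as $0<\Re\xi<\tfrac12$ along with the standard Meijer-$G$ conditions on $(p,q,m,n)$ and on $\arg(4w/s^2)$ --- and then remove these restrictions by analytic continuation in the parameters, in $w$, and in $s$. A secondary point requiring care is confirming that the two manufactured Gamma factors attach to the numerator (producing new $a$'s) rather than the denominator (new $b$'s); this is what pins the output down to $G^{m,n+2}_{p+2,q}$ and not a form with $m$ or $q$ shifted, and it is dictated by the $-\xi$ sign in $\Gamma(1-\xi)$ and $\Gamma(\tfrac12-\xi)$ matching the $\Gamma(1-a_j-\xi)$ pattern.
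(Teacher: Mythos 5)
Your proof is correct, and it takes a more direct route than the paper. The paper's stated ``proof'' is a one-line deduction: invoke the Saxena product formula (Theorem~\ref{theoremgeneral}) with $\sigma=1$, $k=2$, $\rho=1$, take the first Meijer-$G$ to be the exponential kernel $G_{0,1}^{1,0}(sx\,|\,;0)$, and read off the result after tracking the compound indices and simplifying $C_1$. Your main argument instead re-derives the identity from scratch: write the $G$-function as a Mellin--Barnes integral, exchange integrations, compute the elementary Laplace integral $\int_0^\infty e^{-sx}x^{-2\xi}\,dx=\Gamma(1-2\xi)\,s^{2\xi-1}$, and then split $\Gamma(1-2\xi)$ by the Legendre duplication formula (the $k=2$ case of the Gauss multiplication lemma), observing that the manufactured factors $\Gamma(1-\xi)$ and $\Gamma(\frac12-\xi)$ match the numerator pattern $\Gamma(1-a_j-\xi)$ and so produce new upper parameters $0,\frac12$ with $n\mapsto n+2$, $p\mapsto p+2$. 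This is morally the same Mellin--Barnes-plus-multiplication mechanism used in Appendix~\ref{meijerg} to prove Theorem~\ref{theoremgeneral} itself, but applied only to the single case needed; what it buys is transparency --- the constant $\frac{1}{\sqrt{\pi}\,s}$ and the argument change $w\mapsto 4w/s^2$ appear at a glance, and there is no need to assemble and then collapse the unwieldy prefactor $C_1$. Your closing remark that the identity also follows by specializing Theorem~\ref{theoremgeneral} is exactly the paper's intended argument and is a sound cross-check, with one caveat worth flagging: the lower-right output index in the paper's statement of Theorem~\ref{theoremgeneral} is printed as $\rho\delta+k\rho$, but the parameter count in the displayed argument lists forces it to be $\rho\delta+kp$; with the first $G$ being the exponential kernel ($p=0$) this gives $q$ rather than $q+2$, which is what the target $G^{m,n+2}_{p+2,q}$ requires. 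Your attention to contour placement and the temporary restriction $0<\operatorname{Re}\xi<\frac12$ with removal by analytic continuation is the right analytic hygiene here.
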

The following theorem concerns the pole and residue geometry and its information in reconstructing the original function.
\begin{theorem}
\begin{equation}
    \cos x = \sqrt{\pi}G_{0,2}^{1,0}\left(\begin{array}{c}
;\\
0;\frac{1}{2}
\end{array}\bigg| \frac{x^2}{4}\right) .
\end{equation}
\end{theorem}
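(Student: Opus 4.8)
The plan is to evaluate the Meijer-G function directly from its Mellin--Barnes integral representation. With the parameters $p=0$, $q=2$, $m=1$, $n=0$ and lower indices $b_1=0$, $b_2=\tfrac12$, the defining contour integral collapses to
\[
G_{0,2}^{1,0}\left(\begin{array}{c};\\0;\tfrac12\end{array}\bigg| z\right)=\frac{1}{2\pi i}\int_{C}\frac{\Gamma(s)}{\Gamma\!\left(\tfrac12-s\right)}\,z^{-s}\,ds,
\]
where $C$ is a vertical line chosen so that all poles of $\Gamma(s)$, located at $s=0,-1,-2,\dots$, lie to its left. The first step is to justify closing $C$ to the left and collecting the residue series. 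To do this cleanly I would rewrite the integrand using the reflection and Legendre duplication formulas as $\Gamma(s)/\Gamma(\tfrac12-s)=\sqrt{\pi}\,\bigl(\sin(\pi s)\,4^{s}\,\Gamma(1-2s)\bigr)^{-1}$; the factor $1/\Gamma(1-2s)$ decays super-exponentially on semicircular arcs passing midway between consecutive poles (where $1/\sin(\pi s)$ stays bounded), and this decay dominates both $4^{-s}$ and $z^{-s}$ for every fixed $z$, so the arc contributions vanish and the integral equals the sum of residues at $s=-k$, $k\ge 0$.

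Next I would compute those residues. Since $\operatorname{Res}_{s=-k}\Gamma(s)=(-1)^k/k!$, the residue of the integrand at $s=-k$ equals $\dfrac{(-1)^k}{k!}\cdot\dfrac{z^{k}}{\Gamma\!\left(k+\tfrac12\right)}$, and substituting the elementary value $\Gamma\!\left(k+\tfrac12\right)=\dfrac{(2k)!}{4^{k}k!}\sqrt{\pi}$ turns this into $\dfrac{(-1)^{k}(4z)^{k}}{(2k)!\,\sqrt{\pi}}$. Summing over $k$ gives
\[
G_{0,2}^{1,0}\left(\begin{array}{c};\\0;\tfrac12\end{array}\bigg| z\right)=\frac{1}{\sqrt{\pi}}\sum_{k=0}^{\infty}\frac{(-1)^{k}\bigl(2\sqrt{z}\bigr)^{2k}}{(2k)!}=\frac{\cos\!\bigl(2\sqrt{z}\bigr)}{\sqrt{\pi}},
\]
where the right-hand side is read as the entire function $\tfrac{1}{\sqrt{\pi}}\sum_{k}(-1)^k(4z)^k/(2k)!$, so the branch of $\sqrt{z}$ is immaterial. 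Setting $z=x^2/4$ yields $2\sqrt{z}=x$, hence $\cos x=\sqrt{\pi}\,G_{0,2}^{1,0}\left(\begin{array}{c};\\0;\tfrac12\end{array}\bigg|\tfrac{x^2}{4}\right)$, and since both sides are entire in $x$ the identity holds on all of $\mathbb{C}$.

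As a cross-check, and an alternative route that avoids the contour estimate, I would compute the Mellin transform of the right-hand side and invoke uniqueness of Mellin transforms: the substitution $x=u^{2}$ together with the classical evaluation $\int_0^{\infty}v^{2s-1}\cos v\,dv=\Gamma(2s)\cos(\pi s)$ gives $\int_0^{\infty}x^{s-1}\cos(2\sqrt{x})\,dx=2^{1-2s}\Gamma(2s)\cos(\pi s)$; the duplication formula and the reflection identity $\Gamma(\tfrac12+s)\Gamma(\tfrac12-s)=\pi/\cos(\pi s)$ then collapse this to $\sqrt{\pi}\,\Gamma(s)/\Gamma(\tfrac12-s)$, which is exactly the Mellin--Barnes integrand above. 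I expect the only real obstacle to be technical bookkeeping rather than anything conceptual: in the residue route one must set up the nested semicircular contours and invoke the Gamma-quotient asymptotics carefully enough to kill the arc integral, and in the Mellin route one must keep track of the convergence strip $0<\operatorname{Re}(s)<\tfrac12$ and the branch of $\sqrt{x}$. Both are routine but need to be stated with care.
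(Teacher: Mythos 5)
Your proof takes the same route as the paper's: both expand the Meijer-G function via its Mellin--Barnes contour integral (you use the $z^{-s}$, $\Gamma(b_j+s)$ convention, the paper the equivalent $z^{s}$, $\Gamma(b_j-s)$ convention), collect the residues of the single numerator gamma, and reduce the resulting power series to the Taylor series of $\cos$ using the identity $\Gamma\!\left(k+\tfrac12\right)=\dfrac{(2k)!}{4^{k}k!}\sqrt{\pi}$, which the paper cites in the equivalent form $\dfrac{k!\,\Gamma(\tfrac12+k)\,4^{k}}{\sqrt{\pi}}=(2k)!$. You are somewhat more careful than the paper, explicitly justifying closing the contour via the reflection--duplication rewrite of the integrand and adding an independent Mellin-transform cross-check, but these are refinements of the same argument rather than a different approach.
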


\begin{proof}
  By definition, we can write out the right hand side as
\begin{equation}
\begin{split}
RHS &= \sqrt{\pi} \Big(\frac{1}{2\pi i}\Big)\int_{C}\frac{\prod_{j=1}^m\Gamma(b_j-s)\prod_{j=1}^n\Gamma(1-a_j+s)}{\prod_{j=m+1}^q\Gamma(1-b_{j}+s)\prod_{j=n+1}^q\Gamma(a_j-s)}z^sds\bigg|_{z=\frac{x^2}{4}}\\
&=\sqrt{\pi}\Big(\frac{1}{2\pi i}\Big)\int_C \frac{\Gamma(-s)}{\Gamma(1-\frac{1}{2}+s)}z^sds\bigg|_{z=\frac{x^2}{4}}=\sqrt{\pi}\Big(\frac{1}{2\pi i}\Big) \underbrace{2\pi i\sum_{k=0}^{\infty}{\text{res}}_{k}}_{\text{residue theorem}} .
\end{split}
\end{equation}
The gamma function $\Gamma(z)$ has poles at $0,-1,-2,\cdots $, and $\frac{1}{\Gamma(z)}$ is entire, since the gamma function has no zeros.
Using the fact that only the poles from the numerator result in residues that contributes to the integration, and $\text{res}_{z=n}\Gamma(-z)=\frac{(-1)^{n}}{n!}$ (the rest are scalar factors), we have
\begin{equation}
\begin{split}
RHS &= \sqrt{\pi}\Big(\frac{1}{2\pi i}\Big)2\pi i\sum_{k=0}^{\infty}\frac{(-1)^{k}}{k!}\frac{1}{\Gamma(\frac{1}{2}+k)}(\frac{x^2}{4})^k=\sum_{k=0}^{\infty}\frac{(-1)^{k}}{(2k)!}x^{2k}=\cos x ,   
\end{split}
\end{equation}
where we used the fact that $\frac{k!\Gamma(\frac{1}{2}+k)4^k}{\sqrt{\pi}}=(2k)!$, since $k!2^k=2\cdot 4\cdot 6\cdot\cdot\cdot 2k$ and $\frac{\Gamma(\frac{1}{2}+k)2^k}{\sqrt{\pi}}=1\cdot 3\cdot5\cdot\cdot\cdot (2k-1)$ 
\end{proof}
\begin{corollary}
Using the general form of the Meijer-G function being closed under Laplace transform (Equ \ref{m-laplace}), we have
\begin{equation}
 \mathcal{L}(\cos(wx))(s)=\frac{s}{w^2+s^2} .
\end{equation}
\end{corollary}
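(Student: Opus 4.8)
The plan is to recognize $\cos(wx)$ as a rescaled Meijer-G function, feed it through the Laplace-transform identity \ref{m-laplace}, and then collapse the resulting $G$-function to an elementary rational expression. First I would take the identity $\cos x = \sqrt{\pi}\,G_{0,2}^{1,0}\left(\begin{array}{c};\\0;\frac{1}{2}\end{array}\bigg| \frac{x^{2}}{4}\right)$ just established and substitute $x\mapsto wx$; since replacing $x$ by $wx$ only rescales the argument of a Meijer-G function, this gives
\begin{equation*}
\cos(wx)=\sqrt{\pi}\,G_{0,2}^{1,0}\left(\begin{array}{c};\\0;\frac{1}{2}\end{array}\bigg| \frac{w^{2}}{4}\,x^{2}\right).
\end{equation*}

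Next I would apply \ref{m-laplace} with $p=0$, $q=2$, $m=1$, $n=0$, empty numerator list, $b_q=(0,\tfrac12)$, and with the constant ``$w$'' of that formula equal to $w^{2}/4$, so that the argument of the output $G$-function is $\tfrac{4\cdot(w^{2}/4)}{s^{2}}=\tfrac{w^{2}}{s^{2}}$. This yields
\begin{equation*}
\mathcal{L}(\cos(wx))(s)=\sqrt{\pi}\int_{0}^{\infty}e^{-sx}\,G_{0,2}^{1,0}\left(\begin{array}{c};\\0;\frac{1}{2}\end{array}\bigg| \frac{w^{2}}{4}x^{2}\right)dx=\sqrt{\pi}\cdot\frac{1}{\sqrt{\pi}\,s}\,G_{2,2}^{1,2}\left(\begin{array}{c}0,\frac{1}{2}\\0,\frac{1}{2}\end{array}\bigg| \frac{w^{2}}{s^{2}}\right)=\frac{1}{s}\,G_{2,2}^{1,2}\left(\begin{array}{c}0,\frac{1}{2}\\0,\frac{1}{2}\end{array}\bigg| \frac{w^{2}}{s^{2}}\right),
\end{equation*}
where $\tfrac12$ occurs as both an upper (``first $n$'') parameter and a lower (trailing) parameter.

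The remaining and most delicate step is to evaluate this $G_{2,2}^{1,2}$ in closed form. I would write out its Mellin--Barnes integral from the definition of the Meijer-G function: the $\Gamma(\tfrac12+s)$ produced by the upper parameter $\tfrac12$ cancels the $\Gamma(\tfrac12+s)$ in the denominator produced by the lower parameter $\tfrac12$, leaving $\frac{1}{2\pi i}\int_{C}\Gamma(-s)\,\Gamma(1+s)\,z^{s}\,ds$ with $z=w^{2}/s^{2}$. Summing the residues at the poles $s=0,1,2,\dots$ of $\Gamma(-s)$, exactly as in the $\cos x$ computation above (using $\text{res}_{s=k}\Gamma(-s)=(-1)^{k}/k!$ and $\Gamma(1+k)=k!$), collapses the sum to the geometric series $\sum_{k\ge 0}(-1)^{k}z^{k}=\frac{1}{1+z}$, valid for $|z|<1$ and then for all $z\neq -1$ by analytic continuation; equivalently, after the parameter cancellation one recognizes the standard reduction $G_{1,1}^{1,1}\left(\begin{array}{c}0\\0\end{array}\bigg| z\right)=\frac{1}{1+z}$. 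Substituting $z=w^{2}/s^{2}$ gives $G_{2,2}^{1,2}\left(\begin{array}{c}0,\frac{1}{2}\\0,\frac{1}{2}\end{array}\bigg| \frac{w^{2}}{s^{2}}\right)=\frac{s^{2}}{s^{2}+w^{2}}$, and hence $\mathcal{L}(\cos(wx))(s)=\frac{1}{s}\cdot\frac{s^{2}}{s^{2}+w^{2}}=\frac{s}{s^{2}+w^{2}}$, which is the claim.

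I expect the only genuine obstacle to be bookkeeping rather than mathematics: correctly tracking which Meijer-G parameters fall into the ``first $n$'' versus the trailing groups when $p=0$ (so that the Mellin--Barnes integrand really does reduce to $\Gamma(-s)\Gamma(1+s)z^{s}$), confirming the cancellation of the $\tfrac12$-parameters, and being careful about the contour orientation when closing to the right, exactly as was done for the preceding cosine identity. The residue summation itself is routine and mirrors that proof almost verbatim.
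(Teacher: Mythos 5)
Your proof is correct and follows essentially the same route as the paper's: substitute the Meijer-G representation of $\cos$, feed it through the Laplace identity to obtain $\frac{1}{s}G_{2,2}^{1,2}\left(\frac{w^2}{s^2}\big|\begin{smallmatrix}0,\tfrac12\\0,\tfrac12\end{smallmatrix}\right)$, and reduce that to $\frac{1}{1+w^2/s^2}$. The only difference is that you explicitly justify the final reduction (cancellation of the $\Gamma(\tfrac12+s)$ factors and residue summation giving the geometric series), whereas the paper simply asserts the closed-form value of the $G_{2,2}^{1,2}$ after citing the earlier $\cos x$ residue calculation.
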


\begin{proof}
\begin{equation}
\begin{split}
    \mathcal{L}(\cos(wx))(s) &= \frac{1}{\sqrt{\pi}s}\sqrt{\pi}G_{2,2}^{1,2}\left(\begin{array}{c}
0,\frac{1}{2}-\\
0,\frac{1}{2}
\end{array}\bigg| \frac{4w'}{s^2}\right)\bigg|_{w'=\frac{w^2}{4}}\\
&=\frac{1}{s}\left(\frac{1}{1+x}\right)\bigg|_{x=\frac{w^2}{s^2}}=\frac{1}{s}\cdot \frac{1}{1+\frac{w^2}{s^2}}=\frac{s}{w^2+s^2} .
\end{split}
\end{equation}
\end{proof}

\begin{lemma}
Using the following representation of the  hypergeometric function, we get the Laplace transform for the Bessel function of the first kind $J_n(at)$ as a corollary:
    \begin{equation}
        {}_2^1F(\frac{1+n}{2},\frac{2+n}{2},1+n,x)=\frac{2^n(1-\sqrt{1-x})^n}{\sqrt{1-x}x^n}, \forall n\in \mathbb{Z} .
    \end{equation}
\end{lemma}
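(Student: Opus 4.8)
The plan is to recognise the parameter pattern and reduce the claim to a standard algebraic evaluation of a Gauss hypergeometric function. Setting $a=\tfrac{n+1}{2}$, the left-hand side is exactly ${}_2F_1\!\left(a,a+\tfrac12;2a;x\right)$, and the asserted identity is equivalent to the closed form
\begin{equation*}
{}_2F_1\!\left(a,\,a+\tfrac12;\,2a;\,x\right)=\frac{1}{\sqrt{1-x}}\left(\frac{1+\sqrt{1-x}}{2}\right)^{1-2a}.
\end{equation*}
Indeed, with $a=\tfrac{n+1}{2}$ one has $1-2a=-n$, so the right side becomes $2^{\,n}\big(1+\sqrt{1-x}\big)^{-n}/\sqrt{1-x}$, and the elementary factorisation $\big(1+\sqrt{1-x}\big)\big(1-\sqrt{1-x}\big)=x$ turns this into $2^{\,n}\big(1-\sqrt{1-x}\big)^{n}\big/\big(\sqrt{1-x}\,x^{n}\big)$, which is precisely the stated right-hand side. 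So the whole problem reduces to proving the displayed closed form.

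For that I would take the most elementary route. First apply the Pochhammer duplication identity $(a)_k\,(a+\tfrac12)_k=4^{-k}(2a)_{2k}$ with $2a=n+1$; combined with $(n+1)_{2k}/(n+1)_k=(n+1+k)_k=(n+2k)!/(n+k)!$ (valid for integer $n\ge 0$), the $k$-th series coefficient collapses to $4^{-k}\binom{n+2k}{k}$, whence
\begin{equation*}
{}_2F_1\!\left(\tfrac{n+1}{2},\,\tfrac{n+2}{2};\,n+1;\,x\right)=\sum_{k\ge 0}\binom{n+2k}{k}\left(\frac{x}{4}\right)^{k}.
\end{equation*}
The remaining generating function is classical. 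Let $w=w(y)$ be the power series with $w(0)=1$ solving $w=1+yw^{2}$ (the Catalan generating function), so that $2yw=1-\sqrt{1-4y}$ and $\sqrt{1-4y}=1-2yw$. Putting $v=w-1$, so that $v=y(1+v)^{2}$, Lagrange inversion gives $[y^{k}]\dfrac{(1+v)^{\,n+1}}{1-v}=[v^{k}](1+v)^{\,n+2k}=\binom{n+2k}{k}$, and since $\dfrac{(1+v)^{\,n+1}}{1-v}=\dfrac{w^{\,n}}{\sqrt{1-4y}}$ we obtain $\sum_{k\ge 0}\binom{n+2k}{k}y^{k}=\dfrac{1}{\sqrt{1-4y}}\left(\dfrac{1-\sqrt{1-4y}}{2y}\right)^{n}$. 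Substituting $y=x/4$ and simplifying yields $2^{\,n}\big(1-\sqrt{1-x}\big)^{n}\big/\big(\sqrt{1-x}\,x^{n}\big)$, which is the claim. As an alternative to the last two steps, one may instead verify directly that $(1-x)^{-1/2}\big(\tfrac{1+\sqrt{1-x}}{2}\big)^{1-2a}$ solves the hypergeometric ODE $x(1-x)u''+\big[2a-(2a+\tfrac32)x\big]u'-a\big(a+\tfrac12\big)u=0$ and agrees to first order with ${}_2F_1\!\left(a,a+\tfrac12;2a;x\right)$ at $x=0$ — both have value $1$ and derivative $\tfrac{2a+1}{4}$ — and is therefore equal to it by uniqueness of the analytic solution near the regular point $x=0$.

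The Pochhammer bookkeeping and the radical algebra at the end are routine; the one genuinely delicate point is the Lagrange-inversion step — choosing the auxiliary series $v=w-1$ and the specific form $\dfrac{(1+v)^{\,n+1}}{1-v}$ so that the $\big(1-y\phi'(v)\big)$ denominator in the inversion formula is absorbed exactly — together with keeping the principal branch of $\sqrt{1-x}$ near $x=0$ so that the two sides are literally the same analytic germ at the origin, from which the identity extends to all $x$ in the common domain of analyticity. Finally I would remark that although the statement is written for $n\in\mathbb{Z}$, it is meaningful only when $n+1$ is not a non-positive integer (i.e.\ $n\ge 0$); this causes no loss for the intended application to the Bessel functions $J_n$, since $J_{-n}=(-1)^{n}J_n$ for integer $n$, and otherwise both sides are analytic in the parameter $n$ away from the excluded values, so the equality propagates by analytic continuation.
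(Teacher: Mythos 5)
Your proof is correct, and it supplies a derivation the paper itself omits: the paper simply \emph{states} this hypergeometric closed form and invokes it in the Bessel--Laplace corollary (whose Meijer-$G$ evaluation is attributed to Mathematica or a table). Your reduction is sound on every step: recognising the left-hand side as ${}_2F_1(a,a+\tfrac12;2a;x)$ with $a=\tfrac{n+1}{2}$, invoking the quadratic-type closed form $(1-x)^{-1/2}\bigl(\tfrac{1+\sqrt{1-x}}{2}\bigr)^{1-2a}$, and rewriting it via $(1+\sqrt{1-x})(1-\sqrt{1-x})=x$; the Pochhammer-duplication collapse of the $k$-th Taylor coefficient to $4^{-k}\binom{n+2k}{k}$ is correct; and the Catalan generating-function identity $\sum_{k\ge0}\binom{n+2k}{k}y^{k}=(1-4y)^{-1/2}\bigl(\tfrac{1-\sqrt{1-4y}}{2y}\bigr)^{n}$ via Lagrange inversion checks out. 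One small imprecision in the ODE alternative: $x=0$ is a regular \emph{singular} point of the hypergeometric equation, not a regular point, so one cannot appeal to classical Cauchy--Kovalevskaya uniqueness; what actually closes the argument is that the second indicial exponent $1-c=-n\le0$ forces the solution analytic at $0$ with $u(0)=1$ to be unique (and equal to ${}_2F_1$), so once the closed form is verified to solve the ODE and be analytic with value $1$ at the origin the identity follows -- the first-derivative match is then redundant. Your caveat about the parameter range is well taken: for $n\le-1$ the lower parameter $1+n$ is a non-positive integer and the ${}_2F_1$ degenerates, so the lemma's ``$\forall n\in\mathbb{Z}$'' should read $n\ge0$; as you observe this is harmless for the Bessel application since $J_{-n}=(-1)^{n}J_n$.
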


\begin{corollary}
    \begin{equation}
 \mathcal{L}(J_n(at))(s)=\frac{(-s+\sqrt{a^2+s^2})^n}{a^n\sqrt{s^2+a^2}} .
\end{equation}
\end{corollary}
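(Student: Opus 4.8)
The plan is to deduce the corollary from the hypergeometric identity stated in the preceding lemma by expanding $J_n$ in its power series and transforming monomial by monomial. Recall that for $a>0$ and $n\in\mathbb{Z}_{\ge 0}$ one has the classical series
\begin{equation*}
J_n(at)=\sum_{k=0}^{\infty}\frac{(-1)^{k}}{k!\,\Gamma(n+k+1)}\Bigl(\frac{at}{2}\Bigr)^{n+2k},
\end{equation*}
and that we have already established $\mathcal{L}\bigl(t^{m}H(t)\bigr)(s)=\Gamma(m+1)/s^{m+1}$. First I would (provisionally, for $\operatorname{Re}(s)$ large) interchange the summation with $\int_0^{\infty}e^{-st}(\cdot)\,dt$ and transform each power of $t$, obtaining
\begin{equation*}
\mathcal{L}(J_n(at))(s)=\frac{a^{n}}{2^{n}s^{n+1}}\sum_{k=0}^{\infty}\frac{(-1)^{k}\,\Gamma(n+2k+1)}{k!\,\Gamma(n+k+1)}\Bigl(\frac{a^{2}}{4s^{2}}\Bigr)^{k}.
\end{equation*}

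Next I would recast the series as a Gauss hypergeometric function. Applying the Legendre duplication formula (the $k=2$ case of the Gauss multiplication lemma proved in Appendix~D) with $2z=n+1+2k$, and then again with $k=0$ to pin down the constant, collapses the ratio $\Gamma(n+2k+1)/\Gamma(n+k+1)$ to $4^{k}(\tfrac{n+1}{2})_{k}(\tfrac{n+2}{2})_{k}/(n+1)_{k}$, where $(\cdot)_k$ is the Pochhammer symbol. The factors of $4^{k}$ cancel and, writing $x=-a^{2}/s^{2}$, this leaves
\begin{equation*}
\mathcal{L}(J_n(at))(s)=\frac{a^{n}}{2^{n}s^{n+1}}\,{}_2F_1\!\Bigl(\tfrac{1+n}{2},\tfrac{2+n}{2};\,1+n;\,-\tfrac{a^{2}}{s^{2}}\Bigr).
\end{equation*}
I would then invoke the lemma, which evaluates this ${}_2F_1$ in closed form as $2^{n}(1-\sqrt{1-x})^{n}/(\sqrt{1-x}\,x^{n})$. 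Substituting $x=-a^{2}/s^{2}$ gives $1-x=(s^{2}+a^{2})/s^{2}$, hence $\sqrt{1-x}=\sqrt{s^{2}+a^{2}}/s$ and $1-\sqrt{1-x}=(s-\sqrt{s^{2}+a^{2}})/s$; using $(-1)^{n}(s-\sqrt{s^{2}+a^{2}})^{n}=(\sqrt{s^{2}+a^{2}}-s)^{n}$ to absorb the $x^{n}=(-1)^{n}a^{2n}/s^{2n}$ in the denominator, the prefactor $a^{n}/(2^{n}s^{n+1})$ cancels against the closed form and the result reduces exactly to $\dfrac{(\sqrt{a^{2}+s^{2}}-s)^{n}}{a^{n}\sqrt{a^{2}+s^{2}}}=\dfrac{(-s+\sqrt{a^{2}+s^{2}})^{n}}{a^{n}\sqrt{s^{2}+a^{2}}}$, as claimed.

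The one genuinely analytic point — and the step I expect to need the most care — is justifying the term-by-term Laplace transform. The partial sums of the $J_n$ series are dominated on $[0,\infty)$ by a quantity of order $(at/2)^{n}e^{at}$ (comparison with the modified Bessel function $I_n$), so dominated convergence legitimizes the interchange for $\operatorname{Re}(s)>a$, and the identity then propagates to the whole half-plane $\operatorname{Re}(s)>0$ by analytic continuation, both sides being holomorphic there. The remaining work — the duplication-formula bookkeeping and the branch choice for $\sqrt{s^{2}+a^{2}}$ (the principal branch, normalized so that $\sqrt{s^{2}+a^{2}}\to s$ as $a\to 0$) — is routine. As an alternative route I would mention, but not carry out, bypassing the series altogether: write $J_n(at)$ as a $G^{1,0}_{0,2}$ Meijer--G function of $a^{2}t^{2}/4$ and apply the transform formula $\int_{0}^{\infty}e^{-st}G^{m,n}_{p,q}(wt^{2})\,dt=\tfrac{1}{\sqrt{\pi}\,s}\,G^{m,n+2}_{p+2,q}\bigl(\tfrac{4w}{s^{2}}\bigm|\cdots\bigr)$ from the preceding theorem, then identify the resulting Meijer--G value; since the required hypergeometric lemma is already in hand, the series argument above is the shorter path.
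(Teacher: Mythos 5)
Your proof is correct, and it reaches the same ${}_2F_1$ expression and the same hypergeometric lemma as the paper, but by a genuinely different route. The paper writes $J_n(at)$ as a Meijer--G function $G^{1,0}_{0,2}$ of $a^2t^2/4$, applies its earlier theorem giving $\int_0^\infty e^{-sx}G^{m,n}_{p,q}(wx^2)\,dx = \tfrac{1}{\sqrt{\pi}s}G^{m,n+2}_{p+2,q}(4w/s^2\mid\cdots)$, and then identifies the resulting $G^{2,2}_{1,2}$ with $\tfrac{1}{\sqrt{\pi}s}\cdot 2^{-n}\sqrt{\pi}\,(a^2/s^2)^{n/2}\,{}_2F_1\bigl(\tfrac{1+n}{2},\tfrac{2+n}{2};1+n;-a^2/s^2\bigr)$ by appeal to Mathematica / tabulated Meijer--G special cases; that identification is the one step the paper does not derive from scratch. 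You bypass the Meijer--G machinery entirely: power series of $J_n$, term-by-term $\mathcal{L}(t^m)=\Gamma(m+1)/s^{m+1}$, then the Legendre duplication formula (already proved in the paper's Appendix~D) to collapse $\Gamma(n+2k+1)/\Gamma(n+k+1)$ into $4^k(\tfrac{n+1}{2})_k(\tfrac{n+2}{2})_k/(n+1)_k$ — which I checked, including the $k=0$ normalization — so the series becomes the same ${}_2F_1$, after which the algebra with $x=-a^2/s^2$ is identical in both arguments. Your route buys self-containedness (no tabulated Meijer--G lookup) at the cost of the duplication-formula bookkeeping; the paper's route is shorter given its Meijer--G infrastructure but delegates a key identification to software. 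You also supply the dominated-convergence / analytic-continuation justification for the term-by-term transform (dominating by $e^{-st}I_n(at)$ for $\mathrm{Re}(s)>a$ and continuing to $\mathrm{Re}(s)>0$), a point the paper passes over silently; this is a genuine improvement in rigor.
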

\begin{proof}
    Applying Equation (\ref{m-laplace}), we need 
    $G_{1,2}^{2,2}\left(\begin{array}{c}
0,\frac{1}{2}-\\
\frac{n}{2},-\frac{n}{2}
\end{array}\bigg|\frac{a^2t^2}{s^2}\right)$
    referring to mathematica or we may refer to the special case for Meijer-G tabularized at wolfram website,
\begin{equation}
\begin{split}
LHS&=\frac{1}{\sqrt{\pi}s}\cdot 2^{-n}\sqrt{\pi}(\frac{a^2}{s^2})^{n/2}\cdot {}_2^1F\Big(\frac{1+n}{2},\frac{2+n}{2},1+n,-\frac{a^2}{s^2}\Big)\\&=2^{-n}s^{-1-n}a^n\frac{2^n(1-\sqrt{1-x})^n}{\sqrt{1-x}x^n}\bigg|_{x=-\frac{a^2}{s^2}}\\&=s^{n-1}\frac{1}{a^n}\frac{(-1+\sqrt{1+\frac{a^2}{s^2}})^n}{\sqrt{1+\frac{a^2}{s^2}}}=\frac{(-s+\sqrt{a^2+s^2})^n}{a^n\sqrt{s^2+a^2}}.
\end{split}
\end{equation}
\end{proof}

\subsection{Appendix E. Theoretical Approximations and Analysis of ILT}
\subsubsection{Concrete strategy that guarantees isotropicity \label{isotropicb}}
From the limitation theorem, to guarantee isotropicity, we couple the domain partitioning strategy to the phase sampling strategy.  
Denote $B_{ij}'=\exp(-z_ip_{i,j})$. If we relax the independence of phase and the sampling, we can get achievable strategies that satisfies $\mathbb{E}[b_{ij}']=0$, $\mathbb{E}[b_{ij}'b_{ij}'^*]=1$. Consider the following von Mises distribution 
\begin{equation}
    f_{\Phi}(\phi) = \frac{\exp(-a\cos(\phi))}{2\pi I_0(a)}
\end{equation}
and the following two integrals
\begin{equation}
 \mathbb{E}[c_{ij}]=   \int_0^{2\pi} \exp(ae^{i\phi})f_{\Phi}(\phi)d\phi = \frac{2\pi J_0(a)}{2\pi I_0(a)} =\frac{J_0(a)}{I_0(a)}
\end{equation}
\begin{equation}
    \mathbb{E}[c_{ij}c_{ij}^*]  =\int_0^{2\pi} \exp(ae^{i\phi})\exp(ae^{i\phi})^*f_{\Phi}(\phi)d\phi = \int_0^{2\pi} \exp(2a\cos\phi)\frac{\exp(-a\cos(\phi))}{2\pi I_0(a)}d\phi = 1 .
\end{equation}

Therefore, if we focus on the zeros of the Bessel functions of the $0^{th}$-kind, $J_0(a)$, we can guarantee isotropicity. The numerical algorithm roughly goes the following:
\begin{itemize}
    \item Use $r$ (radial component) as a scaling parameter so that we have enough zeros from [0,$2\pi$] to fit in the zeros such that $J_{0}(r\cdot p_{i,j})=0$ deterministically for $p_{i,j}$.
    \item Then at each entry sample the phase according to the distribution $ f_{\Phi}(\phi) = \frac{\exp(-r\cdot p_{i,j}\cos(\phi))}{2\pi I_0(r\cdot p_{i,j})}$, then the vectors are isotropic, hence singular values of $C_{ij} = \exp(-z_ip_{i,j}), j=1,2,3,...n+1$ are bounded.
\end{itemize}
\begin{lemma}
    The smallest singular value for the first order difference matrix $\sigma_n(D_{ij})$
    \begin{equation}
        D_{ij} = \begin{bmatrix}
1 & 0 & 0 & \cdots & 0 \\
-1 & 1 & 0 & \cdots & 0 \\
0 & -1 & 1 & \cdots & 0 \\
0 & 0 & -1 & \cdots & 0 \\
\vdots  & \vdots  & \vdots  & \ddots & \vdots  \\
0 & 0 & 0 & \cdots & 1 \\
0 & 0 & 0 & \cdots & -1
\end{bmatrix}
\end{equation}
is bounded below \cite{109946}.
\end{lemma}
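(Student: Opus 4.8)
The plan is to pass to the Gram matrix $D^{T}D$ and obtain the bound either from the classical spectrum of the discrete Laplacian or from a short telescoping estimate. First I would note that the columns of $D$ are the difference vectors $c_j = e_j - e_{j+1}\in\mathbb{R}^{n+1}$ ($j=1,\dots,n$), so that $(D^{T}D)_{jk}=\langle c_j,c_k\rangle$ equals $2$ when $j=k$, $-1$ when $|j-k|=1$, and $0$ otherwise; thus $D^{T}D$ is precisely the $n\times n$ tridiagonal second-difference matrix (the one-dimensional discrete Dirichlet Laplacian). Its eigenvalues are the classical $\lambda_k = 2-2\cos\!\big(\tfrac{k\pi}{n+1}\big) = 4\sin^2\!\big(\tfrac{k\pi}{2(n+1)}\big)$, $k=1,\dots,n$, realized by the eigenvectors $v^{(k)}_j=\sin\!\big(\tfrac{jk\pi}{n+1}\big)$. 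Taking $k=1$ gives $\sigma_n(D)=\sqrt{\lambda_1}=2\sin\!\big(\tfrac{\pi}{2(n+1)}\big)$, which is strictly positive for every $n$ and asymptotic to $\pi/(n+1)$; this is the sharp value.

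If one prefers to avoid quoting the eigenvalue formula, the same bound up to a constant follows elementarily. For $x=(x_1,\dots,x_n)$ set $x_0=x_{n+1}=0$, so that $\|Dx\|^2=\sum_{j=1}^{n+1}(x_j-x_{j-1})^2$. Writing $x_k=\sum_{j=1}^{k}(x_j-x_{j-1})$ and applying Cauchy--Schwarz with $k$ terms gives $x_k^2\le k\sum_{j=1}^{k}(x_j-x_{j-1})^2\le n\,\|Dx\|^2$, hence $\|x\|^2=\sum_{k=1}^{n}x_k^2\le n^2\|Dx\|^2$, i.e.\ $\sigma_n(D)\ge 1/n$. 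Either route shows $\sigma_n(D)$ is bounded below by a positive quantity of order $1/n$, which is exactly what is needed for the downstream analysis of the pseudoinverse appearing in Algorithm~\ref{alg:cap}.

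There is no genuine obstacle here; the only point requiring care is the boundary bookkeeping. In the spectral route one must check that the corner entries $(D^{T}D)_{11}$ and $(D^{T}D)_{nn}$ are $2$ and not $1$ --- this is precisely because $D$ has $n+1$ rows, so the extra bottom row contributes the missing $\pm1$ --- and in the elementary route one must keep the conventions $x_0=x_{n+1}=0$ consistent so that the first and last coordinates of $Dx$ are correctly counted. I would present the telescoping estimate as the main proof, since it is fully self-contained, and add the tridiagonal-Laplacian identification as a remark that pins down the exact constant $2\sin\!\big(\tfrac{\pi}{2(n+1)}\big)$.
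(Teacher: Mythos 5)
Your proposal is correct, and it reaches the same sharp constant as the paper (modulo a shift in indexing convention: you treat $D$ as $(n+1)\times n$, the paper as $n\times(n-1)$, so your $2\sin\!\big(\tfrac{\pi}{2(n+1)}\big)$ and the paper's $\Sigma$ entries $2\sin\!\big(\tfrac{k\pi}{2n}\big)$ agree after relabeling). However, the route is genuinely different in emphasis. The paper writes out an explicit SVD $D=U\Sigma V^{T}$ with discrete cosine/sine factors and then verifies $UU^{T}=I$, $VV^{T}=I$, and $D=U\Sigma V^{T}$ entry by entry via trigonometric root-of-unity identities; it never forms $D^{T}D$. You instead observe that $D^{T}D$ is the tridiagonal Dirichlet Laplacian and quote its classical spectrum, which shortcuts all of the paper's trigonometric bookkeeping at the cost of invoking a known eigenvalue formula rather than deriving it. What the paper's longer computation buys is a fully self-contained verification (it never cites the Laplacian spectrum); what your route buys is brevity and, crucially, the telescoping-plus-Cauchy--Schwarz argument, which gives a clean order-$1/n$ lower bound with no spectral input at all -- something the paper does not offer, and which is arguably more transparent for the reader who only needs ``bounded below by $c/n$'' to control $A^{\dagger}$. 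One small remark worth flagging: the paper's final line states $\sigma_n(D_{ij})=2\sin\!\big(\tfrac{2\pi}{n}\big)$, which is inconsistent with its own displayed $\Sigma$ (whose smallest entry is $2\sin\!\big(\tfrac{\pi}{2n}\big)$); your value is the one consistent with the stated decomposition.
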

\begin{proof}
The SVD $D_{n\times(n-1)}=U\Sigma V^T$ is given by 
\begin{subequations}
    \begin{equation}
    U_{n\times (n-1)} = \sqrt{\frac{2}{n}}\begin{bmatrix}
\cos(\frac{\pi}{2n}) & \cos(\frac{2\pi}{2n}) & \cdots & \cos(\frac{(n-1)\pi}{2n}) \\
\cos(\frac{3\pi}{2n})  & \cos(\frac{3\cdot 2\pi}{2n}) & \cdots & \cos(\frac{3\cdot (n-1)\pi}{2n}) \\
\vdots  & \vdots  & \ddots & \vdots  \\
\cos(\frac{(2n-1)\cdot \pi}{2n})  & \cos(\frac{(2n-1)\cdot 2\pi}{2n}) & \cdots & \cos(\frac{(2n-1)\cdot (n-1)\pi}{2n})
\end{bmatrix}
\end{equation}
\begin{equation}
    \Sigma_{(n-1)\times(n-1)}=\begin{bmatrix}
2\sin(\frac{\pi}{2n}) & 0 & \cdots & 0 \\
0 & 2\sin(\frac{2\pi}{2n}) & \cdots & 0 \\

\vdots  & \vdots    & \ddots & \vdots  \\
0 & 0 & \cdots & 2\sin(\frac{(n-1)\pi}{2n})
\end{bmatrix}
\end{equation}
\begin{equation}
  V^T_{(n-1)\times (n-1)}= 
\sqrt{\frac{2}{n}}\begin{bmatrix}
\sin(\frac{\pi}{n}) & \sin(\frac{2\pi}{n}) & \cdots & \sin(\frac{(n-1)\pi}{n}) \\
\sin(\frac{2\pi}{n}) & \sin(\frac{2\cdot 2\pi}{n}) & \cdots & \sin(\frac{(n-1)\cdot 2\pi}{n}) \\
\vdots  & \vdots  & \ddots & \vdots  \\
\sin(\frac{(n-1)\pi}{n}) & \sin(\frac{2\cdot 2\pi}{n}) & \cdots & \sin(\frac{(n-1)\cdot (n-1)\pi}{n}) \\
\end{bmatrix}
\end{equation}
\end{subequations}

    The \textbf{unity constraints} $UU^T=I_{n}$ can be checked from 
    \begin{equation}
        \cos^2 \left(\frac{\pi}{2n}m\right)+\cos^2\left(\frac{3\pi}{2n}m\right)+\cdot\cdot\cdot +\cos^2 \left(\frac{(2n-1)\pi}{2n}m\right)= \sum_{j=1}^n \cos^2 \left(\frac{(2j-1)\pi}{2n}m\right)=\frac{n}{2}, \text{m is odd}
    \end{equation}
Applying half angle formula, this equality is equivalent to $\sum_{j=1}^n \cos \left(\frac{(2j-1)\pi}{n}m\right)=0$ which can be checked by considering the real parts of the roots $(x^{1/m})^n+1=0$
and the $x^{(n-1)/m}$ term in the root expansion. Similarly, $VV^T=I_{n-1}$ can be checked
\begin{equation}
    \sin^2 \left(\frac{\pi}{n}t\right)+\sin^2\left(\frac{2\pi}{n}t\right)+\cdot\cdot\cdot +\sin^2 \left(\frac{(n-1)\pi}{n}t\right)= \sum_{j=1}^{n-1} \sin^2 \left(\frac{j\pi}{2n}t\right)=\frac{n}{2}, 
\end{equation}
This equality is equivalent to $\sum_{j=1}^{n-1} \cos \left(\frac{2j\pi}{n}t\right)=\frac{1}{2}$, which can be checked by considering the real parts of the roots $(x^{1/t})^n-1=0$
and the $x^{(n-1)/t}$ term in the root expansion.
To \textbf{verify} that $D=U\Sigma V^T$,we note
\begin{equation}
    D_{mt}= \frac{4}{n}\sum_{k=1}^{n-1}\sin\Big(\frac{k\pi}{2n}\Big)\sin\Big(\frac{t\pi}{n}k\Big)\cos\Big(\frac{(2m-1)\pi}{2n}k\Big) =  \frac{2}{n}\sum_{k=1}^{n-1}\sin\Big(\frac{t\pi}{n}k\Big)  \Big(\sin\Big(\frac{mk\pi}{n}\Big)+\sin\Big(\frac{(1-m)k\pi}{n}\Big)\Big) 
\end{equation}
where we contract using $\sin\Big(\frac{k\pi}{2n}\Big)\cos\Big(\frac{(2m-1)\pi}{2n}k\Big) = \frac{1}{2}\Big(\sin(\frac{mk\pi}{n})+\sin(\frac{(1-m)k\pi}{n})\Big)$ we first check that $D_{mm}=1, \forall m = 1,2,\cdot\cdot\cdot, n-1$. That is,
\begin{equation}
    D_{mm}=\frac{2}{n}\Big(\underbrace{\sum_{k=1}^{n-1}\sin^2\Big(\frac{m\pi}{n}k\Big)}_{n/2}+\underbrace{\sum_{k=1}^{n-1}  \sin\Big(\frac{mk\pi}{n}\Big)\sin\Big(\frac{(1-m)k\pi}{n}\Big)}_{0}\Big)=1
\end{equation}
where the second term can be seen by reversing the multiplication to addition. we next check that $D_{(m+1)m}=-1, \forall m = 1,2,\cdot\cdot\cdot, n-1$.
\begin{equation}
    D_{mm}=\frac{2}{n}\Big(\underbrace{\sum_{k=1}^{n-1}\sin\Big(\frac{m\pi}{n}k\Big)\sin\Big(\frac{(m+1)\pi}{n}k\Big)}_{0}+\underbrace{\sum_{k=1}^{n-1}  \sin\Big(\frac{mk\pi}{n}\Big)\sin\Big(\frac{(-m)k\pi}{n}\Big)}_{-2/n}\Big)=-1
\end{equation}
With a similar procedure, we can check that all other cases vanishes $D_{mt}=0$, if $t\neq m $ and $t\neq m-1 $ 
\begin{equation}
    D_{mt}=\frac{2}{n}\Big(\underbrace{\sum_{k=1}^{n-1}\sin(\frac{t\pi}{n}k)\sin\Big(\frac{m\pi}{n}k\Big)}_{0}+\underbrace{\sum_{k=1}^{n-1}  \sin\Big(\frac{tk\pi}{n}\Big)\sin\Big(\frac{(1-m)k\pi}{n}\Big)}_{0}\Big)=0
\end{equation}
\end{proof}
Thus, $\sigma_n(D_{ij})=2\sin(\frac{2\pi}{n})$.
\begin{theorem}
    Let $B=CD$, since $\sigma_n(D)$ is bounded, then the smallest singular value for $B=\Big(\exp(-z_ip_{i,j})-\exp(-z_ip_{i,j-1})\Big)$ is bounded.
\end{theorem}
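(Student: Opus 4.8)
The plan is to realise $B$ as a product $B=CD$, where $C=\bigl(\exp(-z_ip_{i,j})\bigr)_{1\le i\le n',\,0\le j\le n}$ is the $n'\times(n+1)$ exponential-sampling matrix and $D$ is the $(n+1)\times n$ first-order difference matrix of the preceding lemma (so that $B_{ij}=\exp(-z_ip_{i,j})-\exp(-z_ip_{i,j-1})$ indeed holds), and then to bound $\sigma_n(B)$ below by a product of singular values. The first step is the elementary inequality: if $C$ has full column rank $n+1$, then $\|Cy\|_2\ge\sigma_{n+1}(C)\|y\|_2$ for every $y\in\mathbb{C}^{n+1}$, whence for every $x\in\mathbb{C}^n$
\begin{equation*}
\|CDx\|_2\ \ge\ \sigma_{n+1}(C)\,\|Dx\|_2\ \ge\ \sigma_{n+1}(C)\,\sigma_n(D)\,\|x\|_2,
\end{equation*}
so that $\sigma_n(B)\ge\sigma_{n+1}(C)\,\sigma_n(D)$. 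Since the preceding lemma supplies a positive explicit value for $\sigma_n(D)$ (of order $1/n$), it then suffices to bound $\sigma_{n+1}(C)$ away from $0$.

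The second step verifies the hypotheses of Theorem \ref{randmth} for the rows of $C$. By the von Mises phase-sampling scheme of Appendix \ref{isotropicb}, the rows $C_{i,\cdot}$ are independent and isotropic in $\mathbb{C}^{n+1}$; moreover, since $\operatorname{Re}(z_i)\ge 0$ on the sampling domain, every entry satisfies $|\exp(-z_ip_{i,j})|=e^{-\operatorname{Re}(z_i)p_{i,j}}\le 1$, so $\|C_{i,\cdot}\|_2\le\sqrt{n+1}=:\sqrt{m}$ almost surely. Theorem \ref{randmth} then gives, for $t\ge 0$, with probability at least $1-2(n+1)\exp(-ct^2)$,
\begin{equation*}
\sigma_{n+1}(C)\ =\ s_{\min}(C)\ \ge\ \sqrt{n'}-t\sqrt{n+1}.
\end{equation*}
Choosing the aspect ratio so that $n'$ dominates $m\log n$ — for instance $n'=n^2$ with $t=\sqrt{(2\log n)/c}$ — makes the right-hand side $n-\sqrt{(2/c)(n+1)\log n}\sim n$ while the failure probability $2(n+1)e^{-2\log n}\to 0$. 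Combining with the first step yields $\sigma_n(B)\gtrsim n\cdot\sigma_n(D)$, a positive bound (in fact bounded below by an absolute constant for this aspect ratio) with probability tending to $1$; finally the diagonal rescaling $A_{ij}=-\tfrac{1}{z_i}B_{ij}$ relating $B$ to the matrix $A$ of Algorithm \ref{alg:cap} only costs a factor $\max_i|z_i|$, which is bounded on the domain, so the same conclusion transfers to $\sigma_n(A)$.

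The main obstacle is the subtlety buried in the product inequality: $\sigma_{\min}(CD)\ge\sigma_{\min}(C)\sigma_{\min}(D)$ is false in general, and what is genuinely needed is that $C$ is bounded below on \emph{all} of $\mathbb{C}^{n+1}$, i.e. that the full $(n+1)$-th singular value $\sigma_{n+1}(C)$ is controlled, not merely the smallest singular value of some truncated block. This is precisely where the isotropicity engineered in Appendix \ref{isotropicb} is indispensable, since a single near-degenerate column direction of $C$ would destroy the estimate. A secondary point to handle with care is the deterministic last partition column (which forces the endpoint $2\pi$ and was deliberately excluded in the random-partitioning analysis): one must check that absorbing it into $D$ rather than $C$ keeps $D$ full column rank and does not disturb the isotropicity of the remaining columns of $C$.
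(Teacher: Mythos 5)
Your structural approach is exactly the paper's: factor $B=CD$ with $D$ the first-order difference matrix and $C$ the exponential matrix, then use $\sigma_n(B)\ge\sigma_{\min}(C)\,\sigma_n(D)$. The paper's proof is precisely that one-line product inequality and nothing more; you go further by trying to \emph{complete} it, invoking Theorem \ref{randmth} to actually bound $\sigma_{\min}(C)$. That extra completeness is welcome, and your two cautionary remarks (that one needs the genuine $(n+1)$-th singular value of $C$, hence full column rank, and that the deterministic endpoint column must not spoil things) are both legitimate concerns the paper does not spell out, although the blanket claim that ``$\sigma_{\min}(CD)\ge\sigma_{\min}(C)\sigma_{\min}(D)$ is false in general'' is mis-stated: for $C\in\mathbb{C}^{n'\times(n+1)}$ with $n'\ge n+1$ the inequality \emph{always} holds with the correct index $\sigma_{n+1}(C)$, it is just vacuous when $C$ fails to have full column rank.

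There is, however, a genuine gap in the step where you verify the hypotheses of Theorem \ref{randmth}. You assert that $\operatorname{Re}(z_i)\ge0$ on the sampling domain and therefore $|\exp(-z_ip_{i,j})|\le1$, giving $\|C_{i,\cdot}\|_2\le\sqrt{n+1}$ almost surely. But the isotropicity you cite from Appendix \ref{isotropicb} is \emph{engineered} by drawing the phase $\phi_i$ of $z_i=r_ie^{i\phi_i}$ from a von Mises density whose mode is at $\phi=\pi$; under that scheme $\cos\phi_i$ is typically negative, so $\operatorname{Re}(z_i)<0$ and $|\exp(-z_ip_{i,j})|=e^{-r_ip_{i,j}\cos\phi_i}$ is not bounded by $1$. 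Restricting the phase to the right half-plane restores the entrywise bound but destroys the isotropicity calculation (the Fourier-coefficient conditions in the appendix require the phase to sweep all of $[0,2\pi]$). So the ``$\|A_i\|_2\le\sqrt m$ almost surely'' hypothesis of Theorem \ref{randmth} is not actually satisfied by the rows of $C$ under the sampling scheme that delivers isotropicity, and the two requirements you try to satisfy simultaneously are in tension. In fairness, the paper itself does not attempt to close this loop inside the proof of this theorem — its proof stops at the product inequality and leaves the control of $\sigma_{\min}(C)$ to the heuristic discussion — so your attempt surfaces a real difficulty rather than merely committing an avoidable error; but as written your proof does not establish the required bound on $\sigma_{\min}(C)$.
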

\begin{proof}
\begin{equation}
    \sigma_n(B)=\sigma_n(CD) = \min_{x\in \mathbb{C}^n}\frac{\|CDx\|}{\|x\|} = \min_{x\in \mathbb{C}^n}\frac{\|CDx\|}{\|Dx\|}\frac{\|Dx\|}{\|x\|}\geq \min_{x\in \mathbb{C}^n}\frac{\|Cx\|}{\|x\|}\min_{x\in \mathbb{C}^n}\frac{\|Dx\|}{\|x\|}=  \sigma_n(C)\cdot \sigma_n(D) .
\end{equation}
\end{proof}

\subsubsection{Isotropicity limitation \label{limitationthm}}
\underline{\textbf{The uniform phase is the only way to erase domain variable independence for $B_{ij}$}}
\begin{lemma}
If $\phi$ and $a$ is independent, then the only $f_{\Phi}(\phi)\in L^2([0,2\pi])$ such that  
\begin{equation}
    \int_0^{2\pi} \exp(-a\exp(i\phi))f_{\Phi}(\phi)d\phi = \text{const},
\end{equation}  
is $f_{\Phi}(\phi)=\frac{1}{2\pi}.$
\end{lemma}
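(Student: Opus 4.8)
The plan is to turn the integral identity into a statement about the Fourier coefficients of $f_{\Phi}$ by expanding the exponential as a power series in $a$. Since $\exp(-a e^{i\phi})=\sum_{n\ge 0}\frac{(-a)^n}{n!}e^{in\phi}$ and $f_{\Phi}\in L^2([0,2\pi])\subset L^1([0,2\pi])$, the partial sums are dominated by $e^{|a|}\,|f_{\Phi}(\phi)|$, so dominated convergence justifies interchanging sum and integral: for every fixed $a$,
\begin{equation}
\int_0^{2\pi}\exp(-a e^{i\phi})\,f_{\Phi}(\phi)\,d\phi=\sum_{n=0}^{\infty}\frac{(-a)^n}{n!}\,c_n,\qquad c_n:=\int_0^{2\pi}e^{in\phi}\,f_{\Phi}(\phi)\,d\phi .
\end{equation}
The independence of $\phi$ and $a$ is precisely what guarantees that the same density $f_{\Phi}$ appears for every value of $a$, so that the hypothesis ``$=\text{const}$'' is the genuine requirement that the right-hand side be independent of $a$.

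Next I would note $|c_n|\le\|f_{\Phi}\|_{L^1}\le\sqrt{2\pi}\,\|f_{\Phi}\|_{L^2}$, so the series defines an entire function $g(a)$ of $a\in\mathbb{C}$. The hypothesis asserts that $g$ is constant for $a$ ranging over an interval of positive reals (the admissible radial values), a set with an accumulation point; by the identity theorem $g$ is then constant on all of $\mathbb{C}$, and matching Taylor coefficients at $0$ forces $c_n=0$ for every $n\ge 1$. Writing $\widehat{f_{\Phi}}(k)=\frac{1}{2\pi}\int_0^{2\pi}f_{\Phi}(\phi)e^{-ik\phi}\,d\phi$, this says $\widehat{f_{\Phi}}(k)=0$ for all $k\le -1$.

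Finally I would invoke that $f_{\Phi}$, being a phase density, is real-valued, so $\overline{\widehat{f_{\Phi}}(k)}=\widehat{f_{\Phi}}(-k)$. Together with the previous step this yields $\widehat{f_{\Phi}}(k)=0$ for all $k\ne 0$, hence $f_{\Phi}$ equals a.e.\ the constant $\widehat{f_{\Phi}}(0)$; the normalization $\int_0^{2\pi}f_{\Phi}=1$ then pins this constant to $\tfrac{1}{2\pi}$.

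The main obstacle, and the point where the hypotheses must genuinely be used, is that the one-sided vanishing $c_n=0$ for $n\ge 1$ by itself does \emph{not} force $f_{\Phi}$ to be constant in the bare $L^2$ setting: for example $f_{\Phi}(\phi)=e^{i\phi}$ annihilates every $c_n$, $n\ge 0$, yet is non-constant. The argument therefore relies essentially on $f_{\Phi}$ being a real (normalized) density to transfer the vanishing from the nonnegative to the negative frequencies. The secondary technical care points are the interchange of summation and integration above and the mild observation that ``$=\text{const}$'' is asserted on a set with a limit point, so that the identity-theorem/analytic-continuation step applies.
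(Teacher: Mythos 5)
Your proof follows essentially the same route as the paper's: expand $\exp(-a e^{i\phi})$ in a power series in $a$, conclude that all coefficients $\int_0^{2\pi}e^{in\phi}f_{\Phi}\,d\phi$ with $n\ge 1$ must vanish, and then pass to the constancy of $f_{\Phi}$. The paper does this via ``Feynman's trick'' (differentiating $I(a)$ at $a=0$) and then simply asserts that these vanishing conditions, together with normalization, pin down $f_{\Phi}$ as the constant basis element; you reach the same coefficient conditions via dominated convergence and the identity theorem, which is a cleaner justification of the same step.

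Where your write-up genuinely improves on the paper is in the final passage from ``one-sided'' vanishing to constancy. The paper says that the real and imaginary parts of $\int_0^{2\pi}e^{ij\phi}f_{\Phi}\,d\phi=0$ for $j\ge 1$ are the Fourier coefficients of $f_{\Phi}$ against $\cos(j\phi)$ and $\sin(j\phi)$, and that this forces $f_{\Phi}$ to be constant. That identification of real/imaginary parts with basis coefficients only holds because $f_{\Phi}$ is real-valued — a hypothesis stated nowhere in the lemma (which merely assumes $f_{\Phi}\in L^2$) and used silently in the paper's proof. You make this dependence explicit, supply the counterexample $f_{\Phi}(\phi)=e^{i\phi}$ showing that the one-sided conditions alone do not suffice, and invoke the Hermitian symmetry $\overline{\widehat{f_{\Phi}}(k)}=\widehat{f_{\Phi}}(-k)$ of real functions together with the probability normalization to close the argument. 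So: same mechanism, but you have correctly isolated the hidden real-valuedness assumption that the paper uses without acknowledging, which is exactly the right thing to have flagged.
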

\begin{proof}
    We use Feynmann's integral trick and study $I(a) =  \int_0^{2\pi} \exp(-a\exp(i\phi))f_{\Phi}(\phi)d\phi$. Since $f_{\Phi}(\phi)$ is a probability distribution, then $I(a)=1$. By assumption, $\Big(\frac{d^j}{(da)^j}I(a)\Big)\Big|_{a=0}=0, \forall j=1,2,3,4,\cdots $. We expand $\exp(-a\exp(i\phi))$ around zero. Then,
    \begin{equation}
        \exp(-a\exp(i\phi))= 1- a\exp(i\phi)+\frac{a^2\exp(2i\phi)}{2!}+\cdots \ .
    \end{equation}
    After Taylor expansion, the vanishing derivative at any order translates to hierarchical Fourier basis constraints on $f_{\Phi}(\phi)$. Meaning, 
    \begin{equation}
        \int_0^{2\pi} \exp(i\cdot 0\cdot \phi)f_{\Phi}(\phi)d\phi=1
        \label{const-int}
    \end{equation}
    \begin{equation}
        \int_0^{2\pi}\exp(i\cdot j\cdot \phi)f_{\Phi}(\phi)d\phi=0, j=1,2,3,4,...
        \label{re-im}
    \end{equation}
Since $\left \{\sqrt{\frac{1}{2\pi}},\frac{1}{\sqrt{\pi}}\cos(n\phi),\frac{1}{\sqrt{\pi}}\sin(n\phi)\right \}$ forms the Fourier-Euler basis in $L^2([0,2\pi])$, and the basis coefficients for these basis correspond precisely to Equation \ref{const-int}, Equation \ref{re-im}'s real part, and Equation \ref{re-im}'s imaginary part. 
Thus, $f_{\Phi}(\phi)=\frac{1}{2\pi}.$
\end{proof}

Next, we construct sampling strategy so that the matrix $C_{ij} = \exp(-z_ip_{i,j})-1, j=1,2,3,...n+1$. To bound singular values, we need $C_{ij}$ to have isotropic rows. By isotropic vector row, we mean a random vector $X$ in $\mathbb{C}^n$ with $\Sigma(X)=\mathbb{E}[XX^*]=I_{n\times n}$. We need 
\begin{itemize}
    \item For fixed row $i$, $\mathbb{E}[c_{ij}c_{ij}^*]\sim \text{constant} \forall j$. 
    \item For fixed row $i$, $\mathbb{E}[c_{ij}c_{it}^*]=0, \forall j \neq t$, In fact, if we gurantee independence among the row entries, then $\mathbb{E}[c_{ij}c_{it}^*] = \mathbb{E}[c_{ij}]\mathbb{E}[c_{it}^*]$ and we show a stronger condition that they have mean 0,  i.e., $\mathbb{E}[c_{ij}]=0$.
\end{itemize}
\begin{lemma}
Using the previous lemma, if we have \textbf{uniform phase distribution}, i.e., $f_{\Phi}(\phi)=\frac{1}{2\pi}, 0\leq \phi \leq 2\pi$. Then $\mathbb{E}[c_{ij}]=0$ given the domain partitioning $p_{i,j}$ and phase sampling are independent.
\end{lemma}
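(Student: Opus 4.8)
The plan is to condition on everything except the phase and thereby reduce the claim to a single one–dimensional integral that is, in effect, already computed inside the previous lemma. Write $c_{ij} = \exp(-z_i p_{i,j}) - 1$ with the kime node $z_i = r\,e^{i\phi}$, so that, after conditioning on the radial sample $r$ and the domain partition point $p_{i,j}$ (both independent of $\phi$ by hypothesis), the product $a := r\,p_{i,j} > 0$ is deterministic and $c_{ij}$ reduces to the random variable $\exp(-a e^{i\phi}) - 1$ of the uniformly distributed phase $\phi \in [0,2\pi]$.

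First I would compute the conditional expectation
\begin{equation}
\mathbb{E}\bigl[c_{ij} \mid r, p_{i,j}\bigr] = \frac{1}{2\pi}\int_0^{2\pi}\bigl(\exp(-a e^{i\phi}) - 1\bigr)\,d\phi = \frac{1}{2\pi}\int_0^{2\pi}\exp(-a e^{i\phi})\,d\phi - 1 .
\end{equation}
Since $\exp$ is entire, the series $\exp(-a e^{i\phi}) = \sum_{k\ge 0} \tfrac{(-a)^k}{k!}\, e^{ik\phi}$ converges uniformly in $\phi$ on $[0,2\pi]$ for each fixed $a$, so I may integrate term by term; using $\int_0^{2\pi} e^{ik\phi}\,d\phi = 2\pi\,\delta_{k,0}$ — exactly the orthogonality of the Fourier–Euler basis underlying the previous lemma (Equations \ref{const-int}--\ref{re-im} with $f_\Phi \equiv \tfrac{1}{2\pi}$) — only the $k=0$ term survives, so the integral equals $1$ and $\mathbb{E}[c_{ij}\mid r, p_{i,j}] = 1 - 1 = 0$.

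Finally I would invoke the tower property: because the conditional expectation vanishes identically, for every value of $r$ and $p_{i,j}$, taking the outer expectation over the domain–partitioning and radial processes gives $\mathbb{E}[c_{ij}] = \mathbb{E}_{r,p_{i,j}}\bigl[\,\mathbb{E}[c_{ij}\mid r, p_{i,j}]\,\bigr] = 0$. The only two points requiring care are (i) the justification of the term-by-term integration, which is immediate from the uniform convergence of the exponential series on the compact phase interval, and (ii) legitimacy of the iterated expectation, i.e.\ integrability of $c_{ij}$, which follows since $\lvert\exp(-a e^{i\phi})\rvert = e^{-a\cos\phi} \le e^{a}$ is bounded on the (bounded, non-negative) range of $a$ assumed for the sampled kime-surface. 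Neither is a genuine obstacle; the substantive content is simply the observation that a uniform phase annihilates every nonzero Fourier mode of $e^{-ae^{i\phi}}$, which is precisely what forces the mean to be zero and which is the key step.
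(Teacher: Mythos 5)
Your proof is correct and follows essentially the same route as the paper: condition on the partition and radial variables, use the uniform phase distribution so that only the $k=0$ Fourier mode of $\exp(-a e^{i\phi})$ survives (which is exactly the content the paper invokes from the previous lemma), obtain $\mathbb{E}[c_{ij}\mid r,p_{i,j}]=1-1=0$, and finish by the tower property. The only difference is that you explicitly unpack the term-by-term Fourier integration and integrability justifications that the paper leaves implicit in the phrase ``using our previous lemma.''
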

\begin{proof}
We write out the expectation using our previous lemma
\begin{equation}
    \mathbb{E}[c_{ij}]=\mathbb{E}_{P}[\mathbb{E}_{Z}[c_{ij}\mid P]]=\mathbb{E}_{P}[\mathbb{E}_{Z}[\exp(-z_ip_{i,j})-1]]=\mathbb{E}_{P}[\int_0^{2\pi} \exp(-re^{i\phi}p_{i,j})f_{\phi}(\phi)d\phi]-1=1-1=0 .
\end{equation}
Although this mean $0$ expectation holds for any circular symmetric distribution, but to kill the excess degrees of freedom. Here the sampling is physical since the phase is uniform, therefore we are sampling from cocentric circles foliated by radius. Each foliation (fixed radius closed curve) is sufficient to reconstruct the information in the analytic function. However, we now argue the partition strategy so that $\mathbb{E}[c_{ij}c_{ij}^*]\sim \text{constant} \forall j$ fixing $i$ does not exist.
\begin{equation}
\begin{split}
    c_{ij}c_{ij}^* &=\Bigg(\exp(re^{i\phi}p_{i,j})-1\Bigg)\Bigg(\exp(re^{i\phi}p_{i,j})-1\Bigg)^* \\
    &= [\exp(r\cos\phi p_{i,j})\cos (r\sin\phi p_{i,j})-1]^2+\exp(2r\cos\phi p_{i,j})\sin (r\sin\phi p_{i,j})^2\\
    &=1+\exp(2r\cos \phi p_{i,j})-2\exp(r\cos \phi p_{i,j})\cos(r\sin\phi p_{i,j}) .
\end{split}
\end{equation}
The second term gives the normalizing factor for the von Mises distribution, and the third term integrates to $4\pi$. 

\begin{equation}
\begin{split}
    \mathbb{E}_{\Phi}[c_{ij}c_{ij}^*] &=\int_0^{2\pi}\Big(1+\exp(2\cos \phi p_{i,j})-2\exp(\cos \phi p_{i,j})\cos(\sin\phi p_{i,j})\Big)\frac{1}{2\pi}d\phi  \\
    &=\frac{}{}I_0(2p_{i,j}\cdot r)- 1
\end{split}
\end{equation}

where $I_0$ is the modified Bessel function of the $0^{th}$ kind. We want

\begin{equation}
    \int_0^{\infty} ( I_{0}(2p_{i,j}\cdot r)-1) f(r)dr = \text{const}\neq 0 .
\end{equation}

If we study the asymptotic here 
\begin{equation}
 I_{0}(2p_{i,j}\cdot r)-1 = p_{i,j}^2r^2+O(r^4), \int_0^{\infty} ( I_{0}(2p_{i,j}\cdot r)-1) f(r)dr \propto p_{i,j}^2 \neq \text{constant} .
\end{equation}

Also since $f(r)\geq 0$. The integration $g(p_{i,j}) = \int_0^{\infty} ( I_{0}(2p_{i,j}\cdot r)-1) f(r)dr$ increases with $p_{i,j}$. Therefore, $\mathbb{E}[c_{ij}c_{ij}^*]=g(p_{i,j})$ cannot be constant. Thus, no practical numerical strategy can guarantee isotropicity in this case.
\end{proof}

\begin{lemma}
If $a$ and $\phi$ are independent, then there exists no $f_{\Phi}(\phi)$ such that
\begin{equation}
    \int_0^{2\pi} \exp(a\exp(i\phi))\exp(-i\phi)f_{\Phi}(\phi)d\phi = \text{const}
\end{equation}
where the constant is independent of $a$.
\end{lemma}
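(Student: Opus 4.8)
The plan is to reuse the Feynman-parameter / power-series device from the two preceding lemmas, but to notice that in this case the obstruction already appears at \emph{first order} in $a$, so the argument is actually shorter. Throughout, $f_\Phi$ is taken to be a genuine phase-sampling density on $[0,2\pi]$, i.e. $f_\Phi\ge 0$ with $\int_0^{2\pi} f_\Phi(\phi)\,d\phi = 1$; if one allowed an arbitrary square-integrable $f_\Phi$, then $f_\Phi\equiv 0$ would be a degenerate solution and the statement would be vacuous, so the normalization is part of the intended reading. Define
\[
  I(a) := \int_0^{2\pi} \exp\!\big(a e^{i\phi}\big)\, e^{-i\phi}\, f_\Phi(\phi)\, d\phi ,
\]
regarded as a function of the real parameter $a$, which in the application is $a = r\,p_{i,j}$, independent of the phase $\phi$ by hypothesis.

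First I would check that $I$ extends to an entire function of $a$: the expansion $\exp(a e^{i\phi}) = \sum_{n\ge 0} a^n e^{in\phi}/n!$ converges uniformly in $\phi$ on each disc $|a|\le R$, and $f_\Phi \in L^1([0,2\pi])$, so term-by-term integration is valid and yields
\[
  I(a) = \sum_{n=0}^{\infty} \frac{a^n}{n!}\, \widehat{f_\Phi}(n-1), \qquad \widehat{f_\Phi}(m) := \int_0^{2\pi} e^{im\phi} f_\Phi(\phi)\, d\phi .
\]
If $I(a)$ were independent of $a$, then by the identity theorem (or simply uniqueness of Taylor coefficients) every coefficient of order $\ge 1$ would vanish; in particular the coefficient of $a^1$ is $\widehat{f_\Phi}(0)$, so we would need $\widehat{f_\Phi}(0) = 0$.

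The contradiction is then immediate: $\widehat{f_\Phi}(0) = \int_0^{2\pi} f_\Phi(\phi)\, d\phi = 1 \neq 0$ since $f_\Phi$ is a probability density, so no such $f_\Phi$ exists. For contrast with the earlier lemma I would add a remark on the role of the $e^{-i\phi}$ factor: without it the linear term of $I(a)$ is $\widehat{f_\Phi}(1)$, which a density may perfectly well annihilate, which is why there one has to walk through all the higher Fourier modes to force $f_\Phi$ to be uniform; the extra $e^{-i\phi}$ shifts the normalization mode $\widehat{f_\Phi}(0)$ into the $a^1$ slot, where it can never be zero. There is no genuinely hard step; the only two points needing a line of care are justifying the interchange of summation and integration (uniform convergence on compacta together with $f_\Phi\in L^1$) and the standing convention that $f_\Phi$ is normalized rather than an arbitrary $L^2$ function.
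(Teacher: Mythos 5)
Your proof is correct and follows essentially the same route as the paper's: both observe that the coefficient of $a^1$ in $I(a)$ is $\int_0^{2\pi} f_\Phi(\phi)\,d\phi = 1 \neq 0$ (the paper phrases this as $I'(0)=1$), contradicting $a$-independence. Your write-up simply makes explicit the analyticity/interchange-of-limits step and the normalization convention that the paper leaves tacit.
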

\begin{proof}
Similarly, let $I(a) =  \int_0^{2\pi} \exp(a\exp(i\phi)-i\phi)f_{\Phi}(\phi)d\phi$. We observe $I'(0)=1$. Therefore, it is impossible for any function that is independent on $a$ whose derivative at zero is $1$. Thus, $f_{\Phi}(\phi)$ does not exist.
\end{proof}

\begin{itemize}
\item Interestingly, $I'(a) =  \int_0^{2\pi} \exp(a\exp(i\phi))f_{\Phi}(\phi)d\phi$, which is the previous case, and if we let $f_{\Phi}(\phi)=\frac{1}{2\pi}$. Then, $I'(a)=1, \ \forall a$. Consider $I(0)$, which is $I(0)=0$. Then combing with the first order condition we have $I(a)=a$.
\item Thus, as a by product, we have
\begin{equation}
    \int_0^{2\pi} \exp(ae^{i\phi}) e^{-i\phi}d\phi=2\pi a, \forall a\neq 0.
\end{equation}
\end{itemize}

\subsection{Appendix F: Random partitioning}
\label{randompartit}
\subsubsection{Uniform distribution case}

If $X_i\sim \text{Unif}[0,1]$, then $L_n=\sum_{i=1}^nX_i\sim Irwin-Hall(n)$, the partitioning result distrbution for $T=\frac{L_k}{L_{n+1}}=\frac{\sum_{j=1}^{k}X_j}{\sum_{j=1}^{n+1} X_{j}}$ can be obtained by
\begin{equation}
\begin{split}
    P(T\leq t) &= P\left(L_k\leq t(L_k+\sum_{i=k+1}^{n+1}X_i)\right)=\int_{0}^k P\left(L_k\leq t(L_k+\sum_{i=k+1}^{n+1}X_i)\mid L_k=l\right)P(L_{k}=l)dl\\
    &=\int_{0}^k P\left(\sum_{i=k+1}^{n+1}X_i \geq \frac{l-tl}{t}\right)P(L_{k}=l)dl .
\end{split}
\label{unifresint}
\end{equation}
\begin{figure}[H]
    \centering
    \includegraphics[width=13cm,height=5cm]{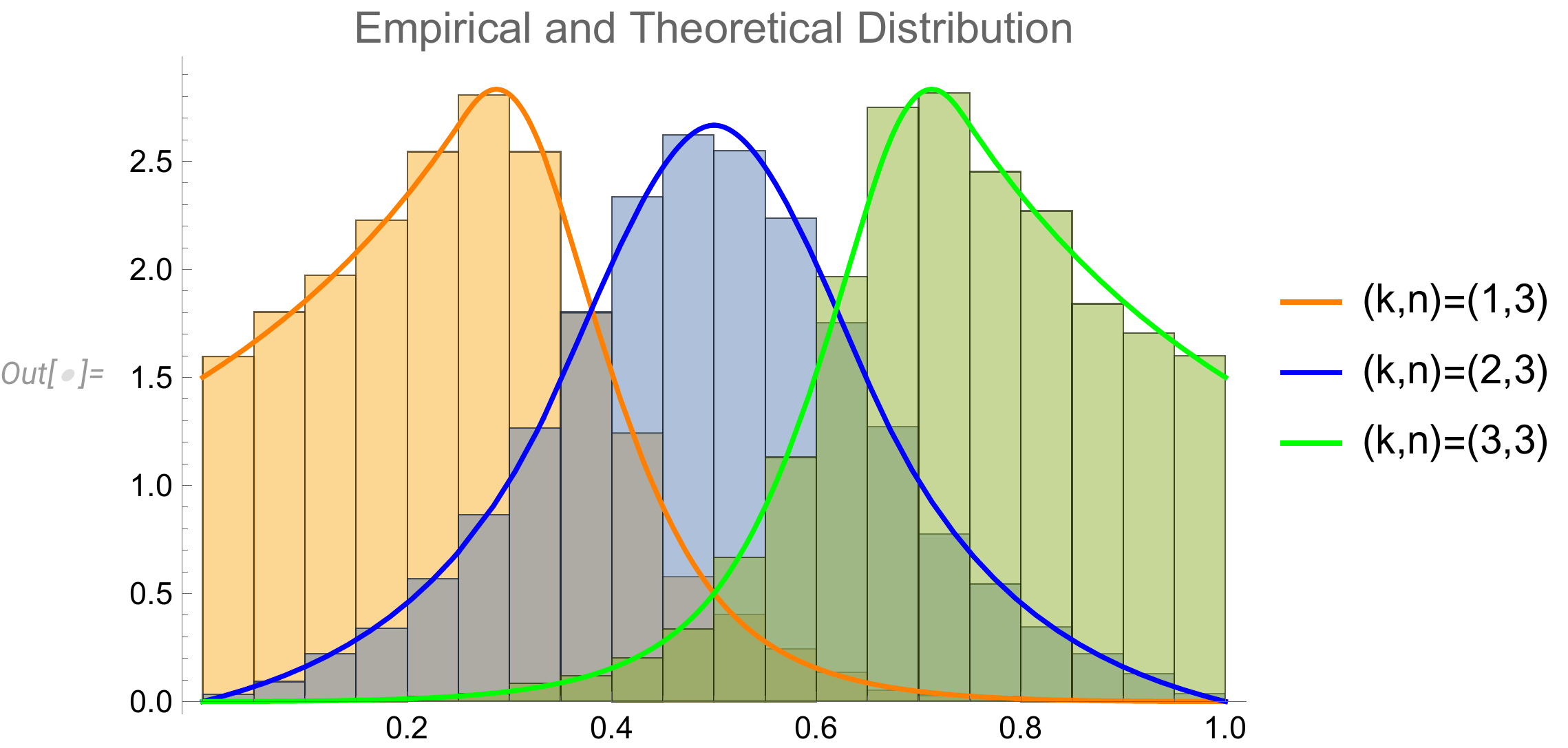}
    \caption{An example for $n=3$ for the uniform sum, with the empirical distribution (histogram) generated from uniform sampling and the theoretical distribution (curve) from \ref{unifresint}}
    \label{unif_rev_plot}
\end{figure}

\subsubsection{Exponential distribution with order statistics from uniform sampling}
The exponential case has a tight connection with the order statistics from uniform sampling with the following result
\begin{lemma}
    Let $X_1,X_2,\cdots ,X_{n+1}\sim \exp(\lambda)$. The joint distribution of $X_1,(X_1+X_2),\cdots,(X_1+\cdots+X_{n+1})$ is a uniform ordered distribution. That is, they are equivalent to the order statistics of a uniform distribution by iid sampling from the uniform (\text{Unif}[$0,s$]) and then perform the ordering $0<X_{(1)}< \cdots <X_{(n)}<s$.
\end{lemma}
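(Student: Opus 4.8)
The plan is to prove the lemma by conditioning on the sum $S = X_1 + \cdots + X_{n+1}$ and showing that, given $S = s$, the partial sums $(X_1, X_1+X_2, \ldots, X_1+\cdots+X_n)$ are distributed exactly as the order statistics $(U_{(1)}, \ldots, U_{(n)})$ of $n$ i.i.d.\ $\mathrm{Unif}[0,s]$ samples. Two standard routes are available, and I would present whichever is cleaner.

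\textbf{Route 1 (direct change of variables).} First I would write the joint density of $(X_1,\ldots,X_{n+1})$, which by independence is $\lambda^{n+1}\exp(-\lambda(x_1+\cdots+x_{n+1}))$ on the positive orthant. Then apply the linear change of variables $Y_k = X_1+\cdots+X_k$ for $k=1,\ldots,n+1$; this map has Jacobian determinant $1$ (it is unit lower-triangular), and its image is the region $0 < y_1 < y_2 < \cdots < y_{n+1}$. The pushforward density is therefore $\lambda^{n+1}\exp(-\lambda y_{n+1})$ on that ordered region. Now I would condition on $Y_{n+1} = s$: the conditional density of $(Y_1,\ldots,Y_n)$ given $Y_{n+1}=s$ is proportional to a constant on the simplex $\{0 < y_1 < \cdots < y_n < s\}$, hence equals $n!/s^n$ there. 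But $n!/s^n$ is precisely the joint density of the order statistics of $n$ i.i.d.\ $\mathrm{Unif}[0,s]$ random variables, which establishes the claim. Normalizing $s$ to $1$ (or dividing through by $Y_{n+1}$) then gives the partitioning-distribution statement used elsewhere in the paper.

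\textbf{Route 2 (exponential spacings / Rényi representation).} Alternatively I would invoke the classical Rényi representation: if $E_1,\ldots,E_{n+1}$ are i.i.d.\ exponential, then the normalized partial sums $\bigl(\tfrac{E_1}{T},\tfrac{E_1+E_2}{T},\ldots,\tfrac{E_1+\cdots+E_n}{T}\bigr)$ with $T = \sum_{j=1}^{n+1}E_j$ have the same law as the order statistics of $n$ i.i.d.\ uniforms on $[0,1]$; equivalently, the spacings of $n$ ordered uniforms on $[0,1]$ are distributed as normalized independent exponentials. This is exactly the assertion (after the trivial rescaling from $[0,1]$ to $[0,s]$), and one can cite it or derive it from Route 1.

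The main obstacle is essentially bookkeeping rather than conceptual: one must be careful that the change of variables is a bijection onto the correct ordered region and that the Jacobian is genuinely $1$, and then that conditioning on the last coordinate correctly produces the uniform measure on the simplex with the right normalizing constant $n!/s^n$. A secondary subtlety worth a sentence is that the statement is for a \emph{fixed} $s$ (so the uniform upper endpoint is random, equal to $X_1+\cdots+X_{n+1}$), and the equivalence is in the conditional sense; stating this precisely avoids any apparent circularity. Once those points are handled, both routes are short.
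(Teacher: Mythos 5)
Your Route 1 matches the paper's proof step for step: both compute the joint density of the partial sums $S_1,\ldots,S_{n+1}$ via a unit-Jacobian change of variables (the paper writes it as a telescoping product of exponential densities, which is the same thing), then divide by the $\Gamma(n+1,\lambda)$ marginal of $S_{n+1}$ to obtain the conditional density $\Gamma(n+1)/s^{n}$ on the ordered simplex $\{0<s_1<\cdots<s_n<s\}$. The only inessential difference is that you quote the formula $n!/s^{n}$ for the joint density of $n$ uniform order statistics as a standard fact (and offer the R\'enyi representation as an alternative citation), whereas the paper rederives that density from scratch by writing the order statistics as a Markov chain and multiplying the successive conditional densities; either way the two sides match and the argument closes.
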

\begin{proof}
Let $S_i=X_1+X_2+\cdots +X_i$ naturally $S_i$ are ordered statistics with $X_i>0$. 
    We can calculate the joint distribution from the exponential distribution
    \begin{equation}
    \begin{split}
        f_{S_{1},\cdots ,S_{n+1}}(s_1,\cdots ,s_{n+1})&=
        f_{X_1}(s_1)f_{X_2}(s_2-s_1)\cdots f_{X_{n+1}}(s_{n+1}-s_{n})\\&=\lambda^{n+1} e^{-\lambda s_{n+1}} \mathbbm{1}\{0<s_1<\cdots <s_n<s_{n+1}\}.
    \end{split}
    \label{tot-dist}
    \end{equation}
The marginal on $S_n$ can be obtained from the well known fact that $X_i\sim \text{exp}(\lambda)\Longrightarrow \sum_i X_i\sim \Gamma(n,\lambda)$:
\begin{equation}
    f_{S_n}(s)=\frac{s^{n-1}e^{-\lambda s}\lambda^n}{\Gamma(n)}\mathbbm{1}\{s>0\}.
    \label{marginals}
\end{equation}
Combining \ref{tot-dist},\ref{marginals} we fixate the length at $s$,
\begin{equation}
     f_{S_{1},\cdots ,S_{n}\mid S_{n+1}=s}(s_1,\cdots , s_n)=\frac{f_{S_{1},\cdots ,S_{n+1}}(s_1,\cdots ,s_{n+1})}{f_{S_{n+1}}(s)}=\frac{\Gamma(n+1)}{s^{n}}
     \mathbbm{1}\{0<s_1<\cdots <s_{n}<s\}.
     \label{fulljoint}
\end{equation}

The $r^{th}$ order statistic for the random variable arrangement partition $0<X_{(1)}< \cdots  <X_{(n)}<s$ with IID uniform sampling ($Unif[0,s]$) is given by
\begin{equation}
    f_{X_{(r)}}(x)=\left (\frac{1}{s}\right )^{n} \frac{\Gamma(n+1)}{\Gamma(r)\Gamma(n+1-r)}x^{r-1}(s-x)^{n-r}.
\end{equation}
The conditional distribution $y>x$, is effectively reducing the problem to $x<X_{(r+1)}< \cdots <X_{(n)}<s$
\begin{equation}
\begin{split}
     f_{X_{(r+1)}\mid X_{(r)}}(y\mid x) &= \left (\frac{1}{s-x}\right )^{n-r} \frac{\Gamma(n+1-r)}{\Gamma(1)\Gamma(n-r)}y^{1-1}(s-y)^{n-r-1}\\&= \left (\frac{1}{s-x}\right )^{n-r}(s-y)^{n-r-1}\  (n-r) .
\end{split}
\end{equation}
We can iterate this process using the Markovian nature, and obtain the joint as in \ref{fulljoint} using $f_{S_{1},\cdots ,S_{n-1}\mid S_n=s}(s_1,\cdots , s_{n-1})=f_{X_{(1)}}(x) f_{X_{(2)}\mid X_{(2)}}(x_2\mid x_1)\cdots f_{X_{(n-1)}\mid X_{(n-2)}}(x_{n-1}\mid x_{n-2})=\frac{\Gamma(n+1)}{s^{n}}
     \mathbbm{1}\{0<x_1<\cdots <x_{n}<s\}$.
\end{proof}
\subsubsection{Contrasting random partitioning from uniform, exponential, and equidistant schemes}
\label{contrastcase}
To contrast the deviation of the theoretical partition with the empirical random partitioning, we illustrate this via the following graphic. We use $n$ here to denote the number of datapoints between the interval $(0,2\pi)$ to avoid ambiguity. We use E1 to denote the partition strategy: $p_j\in [\frac{2\pi}{n+1}j-\frac{\pi}{n+1},\frac{2\pi}{n+1}j+\frac{\pi}{n+1}], \forall j\in\{1,2,\cdots ,n\}$ and E2 to denote the partition strategy: $p_j\sim Unif (\frac{2\pi j}{n}-\frac{2\pi}{n},\frac{2\pi j}{n})$.
\begin{figure}[H]
    \centering
    \includegraphics[width=16cm,height=4cm]{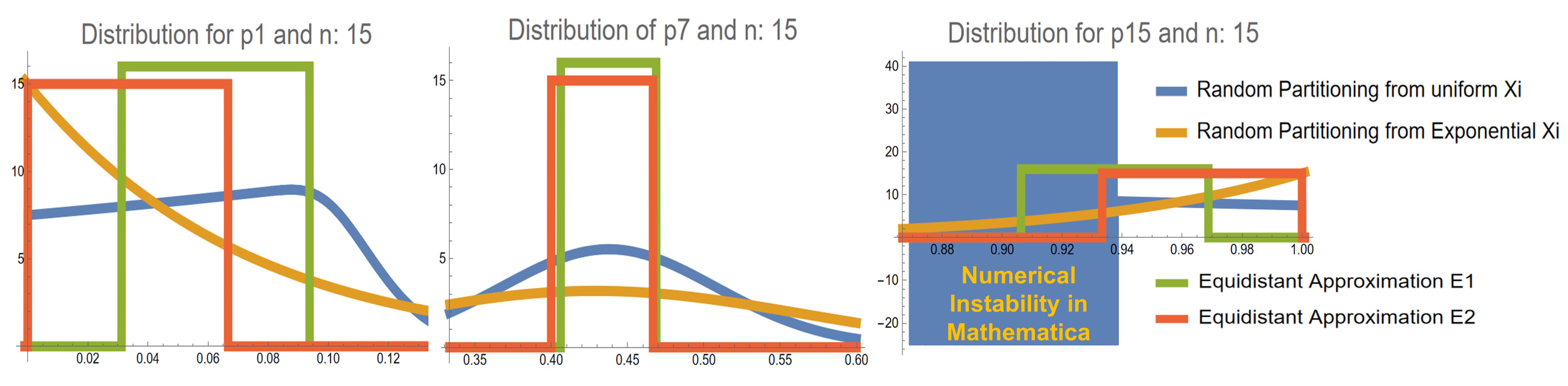}
    \caption{A side by side comparison of the random interval partitioning scheme as well as the equidistant approximation to kill the dependence and obtain theoretical bounds for the smallest singular values from random matrix theory, the blurriness (instability that gives rise to huge fluctuations in the rightmost figure) is attributed to the numerical instability for calculating large scale integration from calculating the density of ratios of Irwin-Hall distribution.}
    \label{Irwin-hall-plot}
\end{figure}

\end{document}